\documentclass[10pt,a4paper,english]{article}
\usepackage[utf8]{inputenc}
\usepackage[english]{babel}
\usepackage{amsmath}
\usepackage{amsfonts}
\usepackage{amssymb}
\usepackage{amsthm}
\usepackage{pgfplots}
\usepackage{graphicx}
\usepackage{circuitikz}
\usepackage{tikz-cd}
\usepackage[justification=centering]{caption}
\graphicspath{ {./figures/} }
\usepackage{geometry}
\usepackage{placeins}
\usepackage[backend=biber,useprefix=true,maxnames=7,minnames=1,maxcitenames=7,mincitenames=1,minalphanames=1,
maxalphanames=7,sorting=nyt,style=alphabetic]{biblatex}
\renewbibmacro{in:}{}
\usetikzlibrary{intersections, pgfplots.fillbetween}
\usetikzlibrary{patterns}
\usetikzlibrary{shapes,decorations}
\pgfplotsset{compat=1.16}
\newtheorem{Theorem}{Theorem}[section]
\newtheorem*{introthm}{Theorem}

\newtheorem{Coro}[Theorem]{Corollary}

\theoremstyle{definition}
\newtheorem{Definitio}[Theorem]{Definition}
\newenvironment{Definition}{\begin{Definitio} \rmfamily }{\end{Definitio}}
\newtheorem{Lemma}[Theorem]{Lemma}
\newtheorem{Proposition}[Theorem]{Proposition}
\newtheorem{Remar}[Theorem]{Remark}
\newenvironment{Remark}{\begin{Remar} \rmfamily }{\end{Remar}}
\newtheorem{Exampl}[Theorem]{Example}
\newenvironment{Example}{\begin{Exampl} \rmfamily }{\end{Exampl}}
\author{Rudy Dissler}
\title{Relative multisections of higher-dimensional manifolds with boundary}
\begin{document}
\maketitle
\begin{abstract}
A multisection, as defined by Ben Aribi, Courte, Golla and Moussard \cite{aribi2023multisections}, is a decomposition of a closed orientable manifold into top-dimensional $1$--handlebodies, whose subcollections intersect along \mbox{$1$--handlebodies} of lower dimensions, and whose global intersection is a closed surface. This concept extends the notions of Heegaard splittings and trisections to higher dimensions. In this article, we adapt multisections to compact manifolds with boun\-dary, generalizing sutured Heegaard splittings and relative trisections to eve\-ry dimension. We provide a diagrammatic approach, which encompasses sutured Heegaard diagrams and relative trisection diagrams. A relative multisection induces a particular decomposition of the boun\-dary of the manifold, which we call a relative fibration. We show that, for $n \geq 4$, a connected $n$--manifold which relatively fibers is necessarily $S^n$ or a connected sum of copies of~$S^1 \times S^{n-1}$. We prove that a compact $5$--manifold whose boundary relatively fibers admits a relative multisection. We also state a gluing theorem that allows to combine suitable relatively multisected manifolds with boundary into multisected closed manifolds.   
\end{abstract}
\section{Introduction}
Throughout this article, we consider compact, orientable, connected manifolds which are smooth or PL. The notions are developed mostly in the smooth category. Their adaptation to the PL category is mentioned when needed. A Heegaard splitting is a decomposition of a closed $3$--manifold into two \mbox{$3$--dimensional} \mbox{$1$--handlebodies}, intersecting along their common boundary. This concept was adapted to dimension $4$ by \textcite{gay2016trisecting}: a trisection is a decomposition of a closed $4$--manifold into three \mbox{$4$--dimensional} $1$--handlebodies, intersecting pairwise along $3$--dimensional \mbox{$1$--handlebodies}, and with common intersection a connected closed surface. Generalizing further, \textcite{aribi2023multisections} define a multisection, or $n$--section (with $n \geq 2$) as a decomposition of a closed $(n+1)$--manifold into~$n$ \mbox{$1$--handlebodies}, such that all the pieces of the decomposition intersect along a connected closed surface, and $k<n$ pieces intersect along an \mbox{$(n-k+2)$--dimensional} handlebody. 
\par In dimension $3$ and $4$, these theories have been adapted to compact manifolds with boun\-dary. Gabai \cite{gabai1983foliations} introduced sutured $3$--manifolds, from which Goda defined sutured Heegaard splittings (\cite{goda1992heegaard}, see \textcite{juhasz2006holomorphic} for the diagrammatic approach). Castro, Gay, Kirby and Pinz{\'o}n-Caicedo developed $4$--dimensional relative trisections in \cite{gay2016trisecting,castro2016relative,castro2018diagrams,castro2018trisections}. But the adaptation of $n$--sections to $(n+1)$--manifolds with boundary, for $n \geq 4$, has not been investigated yet. This is the purpose of this article.
\par To define relative multisections, we use compression bodies, i.e. connected cobordisms using only $1$--handles. A relative $n$--section is a decomposition of a compact $(n+1)$--manifold with boundary $W$ into $n$ compression bodies, such that all the pieces of the decomposition intersect along a connected compact surface with non-empty boundary, and $k<n$ pieces intersect along an \mbox{$(n-k+2)$--dimensional} compression body. Moreover, we require that the intersection of such a compression body with $\partial W$ is the product of a ball with a given compact surface with no closed component. This definition encompasses sutured Heegaard splittings and relative trisections.
\par Sutured Heegaard splittings and relative trisections induce specific decompositions of the boun\-dary of the decomposed manifold: respectively a $2$--dimensional balanced sutured decomposition and a tridimensional open-book decomposition. In higher dimensions, relative $n$--sections are designed to induce a similar structure on the boundary of the multisected manifold. This decomposition of the boundary, which we call a relative fibration, is a fibration over~$S^{n-2}$ of the complement of a $1$--dimensional link, with fiber the interior of a compact surface, called the page of the fibration. In dimension $3$ and $4$, given such decompositions of the boundary of a manifold, there exists a matching sutured Heegaard splitting or relative trisection. We show that this result is true in dimension $5$ as well.
\begin{introthm}[Theorem \ref{thm_relative_multisection}]
\label{thm_existence_dim5}
Let $W$ be a smooth, compact, oriented manifold of dimension~$5$, such that $\partial W$ admits a relative fibration. Then there exists a relative $4$--section of $W$ inducing this relative fibration. 
\end{introthm}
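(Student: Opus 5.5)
The plan is to follow the strategy used for relative trisections in dimension $4$ (Gay--Kirby, Castro): first build the multisection near the boundary, compatibly with the given relative fibration, and then extend it over the interior using a handle decomposition of $W$ relative to a collar of $\partial W$.

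\textbf{Step 1: structure near the boundary.} Let $L\subset \partial W$ be the link of the relative fibration, with page $P$, so $\partial W\setminus \nu L$ fibers over $S^{3}$ with fiber $\mathrm{int}\,P$. Decompose $S^3$ as a union of four $3$--balls $B_1,\dots,B_4$ meeting as in the standard genus-$0$ quadrisection of $S^3$, i.e.\ the boundary of the standard decomposition of $B^4$. Since the balls are contractible, the fibration is trivial over each $B_i$. This gives pieces $P\times B_i$ of $\partial W$, together with the neighbourhood $\nu L \cong L\times D^4$ split radially. These pieces are exactly the required intersections $X_i\cap \partial W$, each a product of a ball with the page. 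Thicken them into a collar $\partial W\times[0,1]$, which yields a relative $4$--section of the collar. Its central surface is $P$, plus trivial pieces over $L$.

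\textbf{Step 2: interior handles.} Choose a handle decomposition of $W$ relative to $\partial W\times[0,1]$. Turning it upside down, view $W$ as the collar with handles of index $1$ to $5$ attached, with one $5$--handle and no $0$--handles. Then stabilize the boundary multisection so that the handles can be absorbed.
\begin{itemize}
\item $1$--handles are added to one piece, say $X_1$, after boundary-connect-summing their cores appropriately.
\item $4$-- and $5$--handles are handled dually, by viewing them as $1$-- and $0$--handles from the other side.
\item $2$-- and $3$--handles are the genuinely new part in dimension $5$.
\end{itemize}
For the middle handles, I would mimic the closed existence proof of Ben Aribi--Courte--Golla--Moussard for $5$--manifolds. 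Attaching circles of $2$--handles are isotoped, after stabilizing, onto the central surface with suitable framings, so that they become part of a piece. Dually, $3$--handles are attached along $2$--spheres, which I would arrange to lie in $3$--dimensional intersection handlebodies. This uses the same Morse-function/level-set arguments as in the closed case, applied relatively.

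\textbf{Main obstacle.} The main difficulty is the middle-index step: making the $2$-- and $3$--handles compatible with the fixed boundary structure while keeping every $k$--fold intersection a compression body whose boundary part is exactly the prescribed ball times $P$. In dimension $5$ there are two middle indices distributed among four pieces, so several intersection strata must be checked simultaneously. My first attempt would be to reduce to the closed case. I would form a closed or doubled manifold, or cap the collar with the model of the boundary multisection. I would then apply the closed existence theorem with a Morse function that is standard near the cap, and cut back. If that fails, I would instead push all the handles into a single collar region and prove a stabilization lemma showing that each handle attachment can be realized by an interior stabilization of the multisection, leaving the boundary fibration unchanged.
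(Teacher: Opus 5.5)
There is a genuine gap. The whole difficulty of the theorem sits in your middle-index step, and what you propose there does not work as stated.

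\emph{Middle handles.} The attaching $2$--spheres of the $3$--handles cannot in general be placed inside $3$--dimensional intersection handlebodies. Every $2$--sphere in a $3$--dimensional handlebody bounds a ball, so it would be unknotted in the level set. Putting the attaching circles of $2$--handles on the central surface does not say which piece absorbs each handle, or why that piece remains a compression body.

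\emph{Fallbacks.} A stabilization is a connected sum with a genus--$1$ multisection of $S^5$, so it never changes the manifold and cannot realize a handle attachment. Capping off $\partial W$ and applying the closed existence theorem gives no control of the multisection near the cap. That control is exactly the relative statement to be proved.

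\emph{Step 1.} It is vacuous. A product structure on a collar of all of $\partial W$ contributes nothing, because extending it over the remaining handles is the original problem. Also, $\partial W$ is $4$--dimensional, so the relative fibration is over $S^2$, not $S^3$. Likewise $\nu L\cong L\times D^3$, and the boundary pieces are $P\times D^2$ over four disks in $S^2$.

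The missing idea is the one in the paper. Lemma \ref{lemma_morse_bdy} gives a self-indexing Morse function $f\colon W\to[0,5]$ with no critical points of index $0$ or $5$. Its restriction to $\partial W$ is the fibration, extended over $\nu\mathcal{B}$ to a map to $B^3$, followed by a projection onto an arc. So $\partial W$ is not a level set, and every intermediate level meets $\partial W$ in an open book with page $P$.

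One then works in the single middle level $X=f^{-1}(2.5)$. This is a compact $4$--manifold with an open book on $\partial X$. It contains the ascending spheres $L_2^+$ of the index--$2$ points and the descending spheres $L_3^-$ of the index--$3$ points, which are transverse.

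The key step is Lemma \ref{lemma_quadrisection}: $X$ admits a special relative quadrisection $X=W_{24}\cup W_{23}\cup W_{13}\cup W_{14}$ inducing the open book. In it, $W_{24}\cup W_{23}$ and $W_{23}\cup W_{13}$ are tubular neighborhoods of $L_2^+$ and $L_3^-$ with $1$--handles attached. It is built, as in dimension $4$, from a Morse function on $X$ and sutured Heegaard splittings.

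Flowing down gives the lower pieces. $W_1$ is isotopic to $f^{-1}([0,1.5])$. $W_2\cong (W_{24}\cup W_{23})\times I$ with $3$--handles attached along $L_2^+$; these are the index--$2$ handles turned upside down, and each caps off one sphere. So both $W_1$ and $W_2$ are compression bodies with negative boundary $P\times D^2$. $W_3$ and $W_4$ are built symmetrically above. The lower-dimensional intersections are those of the quadrisection of $X$, plus $W_{12}$ and $W_{34}$, obtained by surgery along $L_2^+$ and $L_3^-$. This is how the two middle indices get distributed among the four pieces while the boundary structure stays fixed.
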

Note that, because all $5$--dimensional PL manifolds are smoothable, this theorem remains true in the PL category. 
\par Any closed manifold of dimension $2$ or $3$ admits a relative fibration. However, we show that, in higher dimensions, the picture is very different. 
\begin{introthm}[Theorem \ref{prop_rel_trivial}]
\label{thm_rel_fib_trivial}
A smooth (resp. PL), closed, connected, oriented manifold of dimension $n \geq 4$, which admits a relative fibration, is necessarily diffeomorphic (resp. PL-homeomorphic) to $S^n$ or to a connected sum of copies of~$S^1 \times S^{n-1}$.
\end{introthm}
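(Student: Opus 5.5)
The plan is to show that a closed connected $n$--manifold $M$ with a relative fibration has a genus-type decomposition into two pieces, each diffeomorphic to a boundary connected sum of copies of $S^1\times D^{n-1}$, glued along their boundary. Once this is established, the classical argument shows $M$ is $S^n$ or $\#_k S^1\times S^{n-1}$.

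Write $L\subset M$ for the link, a disjoint union of circles, and let $\nu L\cong L\times D^{n-1}$ be a tubular neighbourhood. The complement $M\setminus \mathrm{int}\,\nu L$ fibers over $S^{n-2}$ with fiber a compact surface $\Sigma$ with nonempty boundary. Near $\partial\nu L = L\times S^{n-2}$ the fibration is the projection to $S^{n-2}$; I take this compatibility to be part of the definition.

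\textbf{Step 1: the complement is trivial.} Decompose $S^{n-2}=D_+\cup D_-$ into two hemispheres. Since $D_\pm$ is contractible, the fibration is trivial over each, so the preimages are $\Sigma\times D_\pm$. Hence
\[
M=\nu L\cup (\Sigma\times D_+)\cup(\Sigma\times D_-).
\]
Because $\Sigma$ has nonempty boundary and no closed component, $\Sigma$ deformation retracts onto a graph. Therefore $\Sigma\times D^{n-2}$ is a $1$--handlebody of dimension $n$, namely $\natural_{g}S^1\times D^{n-1}$, or a disjoint union of such.

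\textbf{Step 2: the pieces.} Split $\nu L= L\times D^{n-1}$ along the equator $L\times D^{n-2}\times\{0\}$, i.e. write $D^{n-1}=D^{n-2}\times[-1,1]$ compatibly with the hemispheres of the boundary sphere $S^{n-2}$. Set $H_\pm = (\Sigma\times D_\pm)\cup(\text{half of }\nu L)$. Each half of $\nu L$ is $L\times D^{n-1}$ attached to $\Sigma\times D_\pm$ along $L\times D^{n-2}$, identified with $\partial\Sigma\times D_\pm$. This is a collar-like attachment: it extends $\Sigma$ by a collar of $\partial\Sigma$ and thickens. So $H_\pm\cong \Sigma'\times D^{n-2}\times I$, still a $1$--handlebody. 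Connectedness of $M$ and the attachment along $\partial\Sigma$ should make each $H_\pm$ connected; I will need to check this, since $\Sigma$ may have several components. Then $M=H_+\cup_\partial H_-$, with both pieces $n$--dimensional $1$--handlebodies.

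\textbf{Step 3: conclude.} For $n\ge4$, a closed manifold that is a union of two copies of $\natural_k S^1\times D^{n-1}$ is diffeomorphic to $\#_kS^1\times S^{n-1}$ (or $S^n$ if $k=0$). I would prove this by turning $H_-$ upside down, so that it consists of $(n-1)$--handles and one $n$--handle. Since $n-1\ge3$, the attaching spheres $S^{n-2}$ of these $(n-1)$--handles can be isotoped, by Laudenbach--Po\'enaru for $n=4$ and by its higher-dimensional analogue, to standard spheres $\{pt\}\times S^{n-2}$, each dual to a $1$--handle. This yields the standard model. In the PL case I would use the same argument with PL handles.

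I expect the main obstacle to be this last step, specifically the uniqueness of the attachment of the $(n-1)$--handles, i.e. that every self-diffeomorphism of $\#_kS^1\times S^{n-2}$ extends over $\natural_k S^1\times D^{n-1}$. I would cite Laudenbach--Po\'enaru for $n=4$ and its higher-dimensional versions rather than reprove it. A secondary point is the connectedness bookkeeping for the $H_\pm$ in Step 2.
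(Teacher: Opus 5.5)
Your decomposition $M=H_+\cup_\partial H_-$ in Steps 1--2 is sound. Two small repairs there: $H_\pm$ should be $\Sigma'\times D^{n-2}$, since $\Sigma'\times D^{n-2}\times I$ has dimension $n+1$. Connectedness of $\Sigma$ follows because the partition of the binding components according to the page components is locally constant over the connected base $S^{n-2}$, so a disconnected page would disconnect $M$.

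The genuine gap is Step 3. The claim that, for every $n\ge 4$, a closed manifold obtained by gluing two copies of $\natural_k S^1\times D^{n-1}$ along their boundaries is $\#_k S^1\times S^{n-1}$ is false in the smooth category for $n\ge 7$. Take $k=0$ and $n=7$: every homotopy $7$--sphere, Milnor's exotic ones included, is a twisted sphere $D^7\cup_\varphi D^7$. There is no smooth ``higher-dimensional Laudenbach--Po\'enaru'' in general. Diffeomorphisms of $\partial(\natural_k S^1\times D^{n-1})$ extend over the handlebody only for $4\le n\le 6$ in the smooth category, though for all $n\ge 4$ in the PL category (Remark~\ref{rmk_glue_hby}; compare Remark~\ref{rmk_milnor}). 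Concretely, in your upside-down picture the final $n$--handle can be attached by a diffeomorphism of $S^{n-1}$ that does not extend over $D^n$, producing a connected sum with an exotic sphere. As written, your argument therefore proves the PL statement for all $n\ge 4$ but the smooth statement only for $4\le n\le 6$.

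The missing idea is to keep the fibered structure of the gluing map instead of discarding it. On $L\times D^{n-2}$ the map $\partial H_+\to\partial H_-$ is the identity. On $\Sigma\times S^{n-3}$ it has the form $(x,\theta)\mapsto (F_\theta(x),\theta)$ with $F\colon S^{n-3}\to \mathrm{Diff}(\Sigma,\partial)$. Since $n\ge 4$, $S^{n-3}$ is connected, so $F$ lands in a single component of $\mathrm{Diff}(\Sigma,\partial)$. That component is contractible by Earle--Schatz (Theorem~\ref{Earle}), so $F$ is homotopic to a constant map $g$. Reparametrizing the trivialization over $D_-$ by $g$ then shows the fibration over $S^{n-2}$ is globally trivial. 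Hence
$M\cong(\Sigma\times S^{n-2})\cup(\partial\Sigma\times B^{n-1})=\partial(\Sigma\times B^{n-1})$, the boundary of the standard handlebody of genus $2p+b-1$. This is $\#^{2p+b-1}S^1\times S^{n-1}$ in every dimension, smooth or PL, with no extension theorem needed.

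This is exactly the paper's clutching argument, and it is where the hypothesis $n\ge 4$ actually enters: for $n=3$, $F$ is a genuine monodromy. Your Step 1, despite its title, only establishes triviality over the two contractible hemispheres, which holds for any fibration.
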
 
Therefore, if a compact $(n+1)$--manifold, with $n \geq 4$, admits a relative multisection, then its boundary is a disconnected union of spheres and connected sums of copies of~$S^1 \times S^{n-1}$. We can imagine a more general adaption of multisections to manifolds with boundary, which would alleviate this restriction on the boundary. This is the purpose of a forthcoming article, where we develop a concept of \emph{pseudo-multisections}. However, relative multisections are interesting constructions, because they are designed to preserve many of the features of relative trisections. We are able to produce interesting classes of examples of relatively multisected manifolds, which we can describe diagrammatically, and combine into multisected manifolds, as we shall see in the following paragraphs.     
\par In dimension $3$ and $4$, there exists a diagrammatic approach to sutured Heegaard splittings and relative trisections, via sutured Heegaard diagrams and relative trisection diagrams. We extend this approach to relative multisections, defining relative multisection diagrams. A relative \mbox{$n$--section} diagram associated to a given relative $n$--section is a tuple $\lbrace \Sigma, (\alpha^i)_{1 \leq i \leq n} \rbrace$, where $\Sigma$ is a surface corresponding to the intersection of all the pieces, and every $\alpha^i$ is a set of closed curves on $\Sigma$, defining one tridimensional intersection (i.e. some intersection of $n-1$ pieces). We can also define an \emph{abstract} relative $n$--section diagram, by specifying conditions on $n$ families of curves on a compact surface with boundary. We obtain the following correspondence between relative multisections and abstract relative multisection diagrams.
\begin{introthm}[Theorem \ref{thm_rel_diagram}]
\label{thm_rel_diagram_intro}
For $2 \leq n \leq 6$, every abstract relative $n$--section diagram is the diagram of some smooth multisected $(n+1)$--manifold. If $n \leq 5$, this manifold is unique up to multisection-preserving diffeomorphism. 
\par For every $n \geq 2$, every abstract relative $n$--section diagram is the diagram of some PL multisected $(n+1)$--manifold, and this manifold is unique up to multisection-preserving PL-homeomorphism.  
\end{introthm}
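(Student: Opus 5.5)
The plan is to build the multisected manifold from the diagram one stratum at a time, from the central surface upward, and to invoke the known extension/uniqueness results for diffeomorphisms of boundaries of $1$--handlebodies and compression bodies at each stage. Let $\lbrace \Sigma, (\alpha^i)_{1 \leq i \leq n} \rbrace$ be an abstract relative $n$--section diagram. \emph{Step 1 (3-dimensional pieces):} for each $i$, attach $3$--dimensional $2$--handles to $\Sigma \times [0,1]$ along $\alpha^i \times \{1\}$ to get a compression body $C_i$ with $\partial_- C_i$ determined by surgery of $\Sigma$ along $\alpha^i$. The axioms of an abstract diagram say that for each pair (or cyclically adjacent pair, depending on the combinatorics of the multisection) $\alpha^i, \alpha^j$ is a sutured Heegaard diagram of a standard compression body $P \times [0,1] \natural (\natural^k S^1 \times D^2)$. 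More generally, every subfamily corresponding to a $k$--fold intersection gives, by induction, a standard relative object.

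\emph{Step 2 (inductive gluing):} I will proceed by induction on dimension $d=3,\dots,n+1$. At stage $d$, for every index set $I$ of size $n-d+2$, the union of the already built $(d-1)$--dimensional pieces indexed by supersets of $I$ forms a closed-up manifold $M_I$. This is the boundary of the desired piece minus the part lying in $\partial W$, which is of the form $P\times D^{d-2}$ glued along the corner. Using the diagram conditions (stated earlier in the paper), $M_I \cup (P\times S^{d-3}\text{-type part})$ is identified with $\partial\bigl(P\times D^{d-1}\natural^{k} (S^1\times D^{d-1})\bigr)$, i.e.\ with $\partial$ of a standard compression body. We then glue in this $d$--dimensional compression body $H_I$. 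Existence is automatic once the identification holds. The identification itself comes from the induction hypothesis together with the lemma that the union of the relevant lower pieces is a standard relative multisection of $\sharp^k S^1\times S^{d-2}$ relative to its relative fibration. For uniqueness, the gluing is well defined up to diffeomorphism provided every diffeomorphism of $\partial H_I$ (preserving the boundary structure) extends over $H_I$.

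\emph{Step 3 (extension of diffeomorphisms, the main obstacle):} uniqueness reduces to the extension property for compression bodies, analogous to Laudenbach--Poénaru in the closed case. In the PL category, any PL homeomorphism of $\partial(\natural^k S^1\times D^{d-1})$, and the relative version fixing $P\times S^{d-2}$, extends by an Alexander-trick/cone argument combined with the handle structure. This gives existence and uniqueness for all $n$. In the smooth category, the extension step at the top dimension requires that diffeomorphisms of $S^{d-1}$ arising in the final $3$-- or $4$--handle cancellation extend over $D^d$, i.e.\ it involves $\Gamma_{d-1}=\pi_0\mathrm{Diff}(S^{d-1})/\ldots$. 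Existence also needs the identification of $M_I$ with a standard boundary to be smooth, not merely PL, which holds for $d-1\le 6$ because $\Theta_{d-1}$ vanishes there. Hence smooth existence holds for $n+1\le 7$, i.e.\ $n\le 6$. Uniqueness needs $\Gamma_{d}=0$ up to $d=n+1\le 6$, i.e.\ $n\le 5$, using $\Gamma_k=0$ for $k\le 6$ (Cerf for $k=4$).

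I expect the hardest part to be establishing precisely the relative version of the Laudenbach--Poénaru/Cerf extension statement for compression bodies with the prescribed product structure on the part in $\partial W$. To get it, I would first reduce to the closed handlebody case by doubling along $P$, or by observing that $P\times D^{d-1}\natural^k S^1\times D^{d-1}$ is itself a handlebody with the product part a collar; I would then apply the earlier closed-case theorem of the paper for multisection diagrams.
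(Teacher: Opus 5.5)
Your overall architecture matches the paper's. You build the $3$--dimensional compression bodies from the curve families. You inductively identify each union of lower pieces with $\partial_+$ of a $P$--based compression body, using uniqueness one dimension down, and glue it in. Uniqueness comes from extending a curve-preserving diffeomorphism of $\Sigma$ piece by piece. (The ranges $n\le 6$ and $n\le 5$ come from this dependence of existence on lower uniqueness, together with the smooth handlebody extension theorems in dimensions $4$ to $6$. They do not come from vanishing of $\Theta$ or $\Gamma$ alone.)

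There is, however, a genuine gap exactly at the step you flag as hardest, and neither proposed reduction closes it. The map you must extend over a $k$--dimensional piece $C$ is only defined on $\partial_+ C$. On $\partial(\partial_+ C)\simeq (S^{k-4}\times P)\cup(B^{k-3}\times\partial P)$ it has the form $(x,p)\mapsto(x,f_x(p))$ with $f_x\in\mathrm{Diff}(P,\partial)$, and there is no reason for $f_x$ to be independent of $x$. For $k=4$, $f_{-1}$ and $f_{+1}$ come from extending $h$ over two different $3$--dimensional pieces. So a ``relative version fixing the product part'' is not what is needed. Observing that $C$ is a handlebody with $\partial_- C$ a collar does not help either: the Laudenbach--Po\'enaru, Montesinos and Cavicchioli--Hegenbarth theorems (deep results, not an Alexander-trick argument) need a diffeomorphism of all of $\partial C$. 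Doubling along the $P$--part does not by itself help, since an extension over the double need not preserve the middle copy of $\partial_- C$, so it does not restrict to $C$. Because your existence step rests on uniqueness one dimension down, the gap affects both halves of the statement.

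The missing ingredient is Proposition~\ref{prop_exten_bdy}: first extend $f$ over $\partial_- C\simeq P\times B^{k-3}$, compatibly with the product structure.

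For $k=4$ you need $f_{-1}$ and $f_{+1}$ to be isotopic rel $\partial P$. Take a properly embedded arc $a\subset P$. The curve $\gamma_a=(\{-1\}\times a)\cup([-1,1]\times\partial a)\cup(\{1\}\times a)$ bounds the disk $[-1,1]\times a$ in $\partial_+C$. Hence $f(\gamma_a)$ is null-homotopic in $\partial_+ C\simeq([-1,1]\times P)\#(\#^m S^1\times S^2)$, and therefore already in $[-1,1]\times P$. So $f_{-1}(a)$ and $f_{+1}(a)$ are homotopic rel endpoints, hence isotopic by Baer's theorem. Running this over a system of arcs cutting $P$ into disks gives the isotopy, which fills in $[-1,1]\times P$.

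For $k\geq 5$, the map $x\mapsto f_x$ sends the connected sphere $S^{k-4}$ into a single component of $\mathrm{Diff}(P,\partial)$. That component is contractible by Earle--Schatz, so the map is homotopic to a constant $g$. Realize the homotopy on a collar of $\partial(\partial_- C)$ in $\partial_- C$, and extend by $\mathrm{id}\times g$ inside.

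Only after this can you apply the handlebody extension theorems to $C$. By construction, the resulting map is again compatible with the $P$--based structure on the next pieces, so the induction continues.
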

\par We can glue two sutured Heegaard splittings or two relative trisections along their boundaries, provided that the decompositions induced on these boundaries are diffeomorphic. The result is a Heegaard splitting or a trisection of the closed manifold thus obtained. We show that this is true in higher dimensions as well. We describe the construction of a multisection diagram associated to a multisection obtained by gluing. We provide examples of such constructions. In particular, we use this gluing method to obtain a multisection of a spun manifold of dimension $n+1$, whose $n$--dimensional fiber admits a multisection.   
\begin{introthm}[Theorem \ref{thm:gluing}]
\label{gluing_theorem}
Let $W$ and $W'$ be two smooth (resp. PL), compact manifolds of dimension $n+1$, that admit relative multisections $W= \cup_{i=1}^n W_i$ and $W'=\cup_{i=1}^n W_{i}'$. Suppose that there exists a diffeomorphism (resp. PL-homeomorphism) $f: \partial W \rightarrow \partial W'$, compatible with the relative fibrations on $\partial W$ and $\partial W'$, and such that $f(W_i \cap \partial W ) = W_{i}' \cap \partial W'$ for every $i$.\\ 
Then $W \cup_{f} W' = \cup_{i=1}^n (W_i \cup_{f} W_i')$ is an $n$--section of $W \cup_{f} W'$.   
\end{introthm}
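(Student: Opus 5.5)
To prove the gluing theorem, I would check directly that the pieces $X_i := W_i \cup_f W_i'$ satisfy the defining conditions of an $n$--section of the closed manifold $X := W \cup_f W'$. There are three things to verify: each $X_i$ is an $(n+1)$--dimensional $1$--handlebody; each intersection $X_I = \bigcap_{i\in I} X_i$ with $1<|I|<n$ is an $(n-|I|+2)$--dimensional $1$--handlebody; and the total intersection is a closed connected surface. Since $f(W_i\cap\partial W)=W_i'\cap\partial W'$, we have $X_I = W_I \cup_{f} W'_I$, where $W_I=\bigcap_{i\in I}W_i$. So everything reduces to a local statement: gluing two compression bodies along their "boundary part" $W_I\cap\partial W$, which by the definition of a relative multisection is a product $B^{n-|I|}\times P$ with $P$ the page, a compact surface without closed components.

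The first step is to describe each $W_I$ concretely as a compression body whose boundary contains $\partial_- W_I\cup (W_I\cap \partial W)$. Here $\partial_-$ is the lower boundary coming from the relative fibration, and the side part is $B^{k}\times P$ with $k=n-|I|$. The structure I would use is $W_I \cong (\Sigma_I$-type lower piece$\times I)\cup$ $1$--handles, with the side part $B^k\times P$ forming a collar. Then I would invoke the standard fact, used for relative trisections, that gluing two such compression bodies along a product side region equals $(B^k\times P)\times[-1,1]$ plus $1$--handles from both sides. Because $P$ has no closed components, $P$ deformation retracts to a $1$--complex, so $B^k\times P\times[-1,1]$ is itself a $1$--handlebody (a $0$--handle and $1$--handles). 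The $1$--handles of $W_I$ and $W'_I$ are attached away from the gluing region, and attaching them keeps a $1$--handlebody. I will need the fact that $(\partial_-W_I)\cup_f(\partial_-W'_I)$, after being matched through the fibration, is itself a handlebody of the right dimension.

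The total intersection is $\Sigma\cup_f\Sigma'$. The surfaces $\Sigma$ and $\Sigma'$ each have boundary equal to the binding link, and the compatibility of $f$ with the relative fibrations identifies these boundaries. So the union is a closed surface, and it is connected because $\Sigma$ and $\Sigma'$ are connected with nonempty boundary. Finally, $X$ is covered by the $X_i$, and their interiors are disjoint, since this holds on each side and along $\partial W$ the fibration induced by the $W_i\cap\partial W$ is matched by $f$.

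The main obstacle is the handlebody verification for intermediate $I$. The issue is that the side region $W_I\cap\partial W\cong B^{k}\times P$ is a codimension-$0$ part of $\partial W_I$, but it is not a boundary component. So I have to check carefully that the gluing is along a product neighborhood in the compression body structure, meaning it is collared compatibly with a handle decomposition relative to $\partial_-$. I would first try to show, using the definition, that $W_I$ is diffeomorphic to $(B^k\times P)\times[0,1]\,\natural$ (copies of $S^1\times B^{n-|I|+1}$), with the side region equal to $B^k\times P\times\{0\}$. The gluing then gives $(B^k\times P)\times[-1,1]\,\natural\,\natural\,S^1\times B$, which is visibly a $1$--handlebody. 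In the PL case the same argument works with PL handles, and corners are handled by straightening in the smooth case.
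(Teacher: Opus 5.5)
Your argument is essentially the paper's. In both, the central surfaces glue to a closed surface, and $W_I\cup_f W_I'$ is $(P\times B^{n-|I|-1})\times[-1,1]$ with $1$--handles attached at both ends, hence a handlebody since each component of $P$ is built from $0$-- and $1$--handles; the strata of the glued pieces are inherited from the relative stratifications. Two small corrections: the region $W_I\cap\partial W$ is $P\times B^{n-|I|-1}$, not $P\times B^{n-|I|}$, and by Definition \ref{def_rel_multi} it \emph{is} the negative boundary $\partial_-W_I$, with no separate lower boundary, so the collar compatibility you flag as the main obstacle follows immediately from Definition \ref{def_p_based}.
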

\par This article is organized as follows. Section \ref{section_definition} starts with a quick introduction to submanifolds with corners of a smooth manifold, a notion which will be needed later on to define the pieces of a smooth multisection. Then, we recall the definition of a multisection of a closed manifold, as well as some basic facts and examples, from \cite{aribi2023multisections}. In Section~\ref{sec_rel_mult}, we introduce relative multisections and relative fibrations. In particular, we give a proof of Theorem \ref{prop_rel_trivial}. Section~\ref{section_diagram} is devoted to relative multisection diagrams. In Section~\ref{section_gluing}, we show how to glue relative multisections and relative multisection diagrams. In Section~\ref{section_existence}, we give the existence proof of multisections in dimension $5$ (Theorem~\ref{thm_relative_multisection}), using Morse Theory. In Section~\ref{section_more_examples}, we construct more complicated examples of relative multisections, and provide more applications of the gluing method. 
\par We use the following conventions. For any family of sets $\lbrace S_i \rbrace_{i \in I}$, we denote by $S_I$ the intersection~$\cap_{i \in I} S_i$. To make notations less cluttered, we denote by $S_{i_1...i_k}$ the intersection~$S_{\lbrace i_1,...,i_k \rbrace}$. We denote by $\Sigma_{g,b}$ a genus--$g$ surface with $b$ boundary components. We refer to $1$--handlebodies as handlebodies.\\
\par The author wishes to thank his advisors, Benjamin Audoux and Delphine Moussard, for their help and many meaningful discussions.
\section{Background} 
\label{section_definition}
\subsection{Submanifold with corners}
Throughout this article, we decompose smooth manifolds (with or without boundary) into collections of submanifolds with corners. To that end, we recall the definition of a submanifold with corners of a smooth manifold, as well as some basic properties. We start with closed manifolds, then consider the case of manifolds with non-empty boundary.
\begin{Definition}
\label{def_corner}
Let $W$ be a smooth closed manifold, together with a smooth atlas $\mathcal{U}$. Let~$Y$ be a topological $m$--dimensional submanifold of $W$. We say that $Y$ is a \emph{submanifold with corners} of~$W$ if for all $y \in Y$, there exists an integer $k \geq 0$, and a chart $(y \in U_y, \phi_y)$ in $\mathcal{U}$, with $\phi_y(y) = \lbrace 0 \rbrace$, such that $\phi_y (U_y \cap Y) = \mathbb{R}^{m-k} \times [0, \infty)^k$. We say that such a $y$ belongs to the \emph{$(m-k)$--stratum} of~$Y$.  
\end{Definition}
\begin{Remark}
Because there cannot exist a diffeomorphism of $\mathbb{R}^m$ sending $\mathbb{R}^{m-k} \times [0, \infty)^k$ to $\mathbb{R}^{m-\ell} \times [0, \infty)^{\ell}$ if $k \neq \ell$, each $y$ belongs to one and only one stratum of $Y$. Therefore we have a partition of $Y$ into its strata, which we call a \emph{stratification} of $Y$.  
\par The boundary of a submanifold with corners $Y \subset W$ is well-defined as its topological boundary or, equivalently, as the union of all its strata but the top-dimensional one. The interior of $Y$ corresponds to its top-dimensional stratum. It is an open smooth submanifold of $W$.   
\end{Remark}
We want to define a way to smoothen corners. By that, we mean defining a smooth submanifold with boundary, which corresponds to a given submanifold with corners. Me\-thods for smoothening (or rounding, straightening) corners already exist in the literature (see Milnor \cite[p.~86,87]{milnor2007differentiable}, see also \cite[Chapter 6, paragraph 8]{kosinski2013differential}). But these references deal with specific types of corners, such as the corners resulting from trivial cobordisms and handle attachments. We need to work in a slightly more general setting, although the arguments are very similar. Moreover, our definition will allow us to see the smoothing of a submanifold with corners as a subset of this submanifold.     
\begin{Proposition}
\label{prop_smoothing}
Given a compact $m$--submanifold with corners~$Y$ of a smooth manifold~$W$, there exists a smooth submanifold with boundary of~$W$, which is a subset of $Y$ and is (continuously) ambient isotopic to $Y$. Moreover, the isotopy is smooth on each stratum of~$Y$.    
\end{Proposition}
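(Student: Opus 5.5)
We construct a smooth function $\rho: W \to \mathbb{R}$, defined near $Y$, whose sublevel set $\{\rho \leq -\varepsilon\}$ (for small $\varepsilon>0$) is a smooth compact submanifold with boundary contained in $Y$. We then flow $Y$ onto this sublevel set along a vector field transverse to the strata. The model is $\mathbb{R}^{m-k}\times[0,\infty)^k$, where the corner can be rounded by a function of the form $\sum_j \varphi(x_j)$. Here $\varphi$ is a fixed smooth function on $\mathbb{R}$ with $\varphi(t)=-t$ for $t\geq \delta$, $\varphi(t)>0$ near $t\le 0$, and $\varphi'<0$ everywhere. Alternatively one can use a smoothed maximum of the functions $-x_j$.

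\textbf{Step 1 (local collar functions).} Cover the compact set $Y$ by finitely many corner charts $(U_y,\phi_y)$ from Definition~\ref{def_corner}. In each chart, the codimension-one faces are the hyperplanes $\{x_j=0\}$, $j=1,\dots,k$. Since each point lies in a unique stratum, these faces patch globally into the closures of the codimension-one strata of $Y$. We assume $m=\dim W$, which is the case relevant for the decompositions; for lower-dimensional $Y$, the argument runs inside a tubular neighbourhood restricted to a smooth manifold containing $Y$. For each face $F$ of the boundary, build a smooth function $t_F$ near $F$ such that $F=\{t_F=0\}$ locally, $t_F>0$ on the interior side, and $dt_F\neq 0$. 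The differentials $dt_F$ must be linearly independent wherever several faces meet. Use a partition of unity subordinate to the charts: locally $t_F=x_j$, and a convex combination of positive multiples of local defining functions is again a defining function with the same independence property. Checking this independence after patching is the main obstacle. I would handle it by the standard inductive construction over strata, from the deepest stratum upward, choosing the defining functions compatible with a chosen system of collars, as in Milnor's and Kosinski's treatments.

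\textbf{Step 2 (rounding).} Set $\rho=\sum_F \varphi(t_F)$, with each term extended by $-\delta$ away from $F$, and take $Y'=\{\rho\le c\}$ for a suitable regular value $c$. Near a corner point of depth $k$, $d\rho=\sum_j\varphi'(t_j)\,dt_j$ with every coefficient negative. By the independence of the $dt_j$, $d\rho\neq0$ near the corner, so $Y'$ is a smooth submanifold with boundary. Choosing $\varphi$ with $\varphi>$ its value near zero, i.e.\ positive on $\{t\le\delta'\}$ for small $\delta'$, ensures $Y'\subset \operatorname{int}Y\cup\emptyset\subset Y$. Away from the faces, $\rho$ is constant and $Y'$ coincides with the interior of $Y$.

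\textbf{Step 3 (isotopy).} Take a vector field $V$ on a neighbourhood of $\partial Y$ with $dt_F(V)>0$ for every face $F$ near it; locally $V=\sum_j \partial_{x_j}$, glued by a partition of unity, since the positivity conditions are convex. Then $d\rho(V)<0$. Rescale $V$ so that each flow line leaving a point of $\partial Y$ hits $\partial Y'$ at time exactly $1$, using the transversality of $V$ to both $\partial Y$ and $\partial Y'$. The straight-line interpolation along these flow lines on a collar, extended by the identity elsewhere, gives a continuous ambient isotopy from $Y$ to $Y'$. The resulting map $\partial Y\to\partial Y'$ is a homeomorphism which is smooth on each stratum but not across corners, and the isotopy is smooth on each stratum of $Y$, as required.
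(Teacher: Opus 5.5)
\textbf{Different route.} The paper takes an inward field $\chi$ transverse to all strata and uses it to shrink $Y$ to $Y_t$ inside a collar. It then rounds corners one stratum at a time in product neighbourhoods $\overline{A}\times[0,\infty)^{m-1}$, using a planar isotopy of $[0,\infty)^2$ onto the region above a graph that is smooth everywhere except at the corner point. You instead realise $Y'$ globally as a sublevel set of $\rho=\sum_F\varphi(t_F)$ and then flow. Steps 1--2 are essentially sound when faces are embedded. The independence you flag as the main obstacle is in fact automatic. At $p\in F$ the $d\lambda_i$ terms drop out, so $dt_F(p)=\sum_i\lambda_i(p)\,dx^{(i)}_{j(i)}(p)$ is a positive combination of covectors that all annihilate the same tangent hyperplane of $F$ and are positive on inward vectors. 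It is therefore a positive multiple of the conormal of $F$; these conormals are independent at a corner, and independence persists nearby.

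\textbf{Gap in Step 3: no smoothness on the top stratum.} The isotopy you build is not smooth on $\mathrm{Int}(Y)$, so the ``moreover'' clause fails; it is the top stratum that breaks. Coordinates obtained by flowing from the non-smooth hypersurface $\partial Y$ are not smooth along flow lines issuing from corners. Take $Y=[0,\infty)^2$, $V=\partial_{x_1}+\partial_{x_2}$ and $\rho=\varphi(x_1)+\varphi(x_2)$. The time to reach $\{\rho=c\}$, measured from the point where the flow line through $x$ meets $\partial Y$, is $\tau_0-\tfrac12\lvert x_1-x_2\rvert+O(\lvert x_1-x_2\rvert^2)$. Hence both your rescaled field and the straight-line interpolation in these coordinates have a kink along the diagonal, which lies inside $\mathrm{Int}(Y)$. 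What is missing is a collar coordinate that is smooth on $\mathrm{Int}(Y)$ and tends to $0$ on $\partial Y$. One choice is the parameter $s$ of the family of roundings $\{\sum_F\varphi(t_F/s)=c\}$ as $s\to0$. It is smooth on $\mathrm{Int}(Y)$ by the implicit function theorem, since $\partial_s\varphi(t/s)=-t\varphi'(t/s)/s^2>0$ for $t>0$. Alternatively, use the paper's local model isotopy, which is smooth away from the corner.

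\textbf{Gap in Step 1: faces need not be embedded.} Definition~\ref{def_corner} only asks for local corner charts, so a codimension-one face may meet itself at a corner. A teardrop in a surface is an example, and the paper's proof explicitly treats $1$--strata whose closure is bounded by a single point of the $0$--stratum. Near such a point the face is the union of two transverse half-hyperplanes. So no smooth $t_F$ exists with $F=\{t_F=0\}$ locally and $dt_F\neq0$. The fact that each point lies in a unique stratum does not rule this out. In the multisection setting the facets of $W_I$ at a corner are distinct pieces $W_{I\cup\{j\}}$, so your argument covers the application but not the proposition as stated.

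\textbf{Lower-dimensional $Y$.} The case $m=\dim W$ is not the only relevant one. The pieces $W_I$ with $\lvert I\rvert\geq 2$ have positive codimension and are smoothed too. Your reduction needs an open $m$--manifold $\tilde Y\supset Y$ inside $W$, and then an extension of the isotopy from $\tilde Y$ to $W$. Both are patching arguments you would have to supply; the paper is also brief on this point.
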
 
\begin{proof}
We start by proving the existence of a smooth vector field $\chi$ on $W$, pointing inside~$Y$ on~$\partial Y$, and transverse to each stratum of $\partial Y$. Denote by $\lbrace (U_{y_i}, \phi_i) \rbrace_{1 \leq i \leq \ell}$ a finite open cover of $Y$, by charts as in Definition~\ref{def_corner}. Let each $y_i$ belong to the $(m-k_i)$--stratum of $Y$, and let~$U_i = U_{y_i}$ to simplify notations. Denote by $(\rho_i)_{1 \leq i \leq \ell}$ a subordinate partition of unity. For every $i$, there exists a smooth vector field $\epsilon_i$ on $U_i$, which is inward on $U_i \cap \partial Y$. A linear combination of such vector fields, with non-negative (and at least one non-zero) coefficients, shares the same property. Define a smooth vector field on $W$ by $\chi_i(x) = \rho_i(x) \epsilon_i(x)$ if $x \in U_i$ and $\chi_i \equiv 0$ otherwise. Then define a vector field $\chi$ by $\chi(x) = \Sigma_i \chi_i(x)$ for $x \in W$: this vector field has the required properties. This implies that $\partial Y$ admits a collar in $Y$, i.e. a subset of $Y$ diffeomorphic to $\partial Y \times [0,1)$, with~$\partial Y$ identified with~$\partial Y \times \lbrace 0 \rbrace$. Let $0 <t<1$. We smoothen the corners of $Y_t = Y \setminus (\partial Y \times [0,t))$ inside~$Int(Y)$: this is done as follows. 
\par A component of the $1$--stratum of $Y_t$ is either a circle, or an open interval $A$ with $\partial \overline{A}$ consis\-ting in either one or two points in the $0$--stratum of $Y$. Suppose that $\partial \overline{A}$ consists in two points in the \mbox{$0$--stratum} of $Y$. By pushing along $\chi$, we find a neighborhood of~$\overline{A}$ in~$Y$ diffeomorphic to~$\overline{A}  \times [0,\infty)^{m-1}$. We isotope $[0, \infty)^2 \subset [0,\infty)^{m-1}$ on a subset of $[0, \infty)^2$ bounded by the graph of a smooth function~$f$, taking values in $(0, \infty)$, as in Figure \ref{fig_smooth}. This isotopy is smooth everywhere but at the origin, and induces a piecewise smooth isotopy (smooth everywhere but on~$\overline{A}$) between~$\overline{A} \times [0,\infty)^{m-1}$ and a subset of~$\overline{A} \times [0,\infty)^{m-1}$ diffeomorphic to $\overline{A} \times \mathbb{R} \times [0, \infty)^{m-2}$. The image of~$Y_t$ by this isotopy is a subset of $Y_t$, piecewise smoothly isotopic to~$Y_t$, and a submanifold with corners of~$W$, whose $0$--stratum has $2$ components fewer than the $0$--stratum of $Y_t$. Iterating this procedure straightens the unions of closures of components of the $1$--stratum bounded by two points in the $0$--stratum, producing components of the $1$--stratum whose closure is bounded by one point in the $0$--stratum. Then we apply the same operation to the components of the \mbox{$1$--stratum} bounded by one point in the $0$--stratum, until we obtain a submanifold with corners of $W$, piecewise smoothly isotopic to~$Y_t$, with an empty $0$--stratum. The $2$--stratum of this submanifold consists in smooth closed surfaces, or open surfaces whose closure is bounded by components of the $1$--stratum. Again by the same argument, we isotope the $1$-- and $2$--strata so as to obtain a submanifold with corners inside $Y_t$, with an empty $1$--stratum. We iterate this procedure until we obtain a smooth submanifold with boundary, included in $Y_t$ and piecewise smoothly isotopic to $Y_t$. Finally we extend the isotopy trivially to $W$.  
\par Note that the isotopy defined at each step is smooth on each stratum of $Y_t$. Moreover, the image of this isotopy depends only (up to smooth isotopy) on the graph of the smooth function $f$ from $(0, \infty)$ to $(0, \infty)$. As any two graphs of such functions are smoothly isotopic in $(0, \infty)^2$ (they can be compactified as closed curves in a disk, which is simply connected), the choice of the graph only changes the resulting submanifold up to a smooth isotopy.   
\end{proof}
\begin{figure}[h!]
\[
\begin{tikzpicture}[scale=0.3]
\node (mypic) at (0,0) {\includegraphics[scale=0.5]{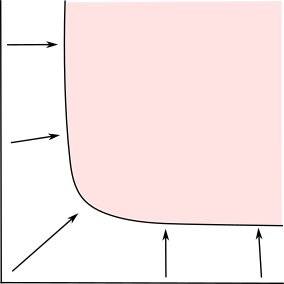}};
\end{tikzpicture}
\]
\caption{Isotopy between $[0,\infty)^2$ and a subset of $[0,\infty)^2$ diffeomorphic to $\mathbb{R} \times [0, \infty)$}
\label{fig_smooth}
\end{figure}
The \emph{smoothing} of a submanifold with corners $Y \subset W$ is defined (up to ambient isotopy) as the smooth submanifold $s(Y) = F(Y,1) \subset Y$, with $F$ an ambient isotopy of $W$ as in Proposition \ref{prop_smoothing}. For any subset $S \subset Y$, we denote by $s(S)$ the image $F(S,1)$.
\par Now we consider the case of a submanifold with corners of a smooth manifold with non-empty boundary. We use a slightly different definition for the strata of such a submanifold, which we call \emph{relative strata}. 
\begin{Definition}
\label{corners_withbdy}
Let $W$ be a smooth $(n+1)$--manifold with boundary, together with a smooth atlas with boundary $\mathcal{A}$. Let $Y$ be a topological $m$--dimensional submanifold of $W$. We say that $Y$ is a \emph{submanifold with corners} of $W$ if, for all $y \in Y$:
\begin{itemize}
\item either $y \in \partial W$, and there exists an integer $k \geq 0$ and a chart $(U_y, \phi_y)$ in $\mathcal{A}$, with $\phi_y(y) = \lbrace 0 \rbrace$, such that $\phi_y(U_y)= \mathbb{R}^{n} \times [0,1)$ and $\phi_y (U_y \cap Y) = (\mathbb{R}^{m-k-1} \times [0, \infty)^{k}) \times  [0,1)$;
\item or $y \in W \setminus \partial W$, and there exists an integer $k \geq 0$ and a chart $(U_y, \phi_y)$ in $\mathcal{A}$, with $\phi_y(y) = \lbrace 0 \rbrace$, such that $\phi_y (U_y \cap Y) = \mathbb{R}^{m-k} \times [0, \infty)^{k}$.
\end{itemize}  
We say that such a $y$ belongs to the \emph{relative $(m-k)$--stratum} of $Y$. 
\end{Definition}
\begin{Remark}
As in the closed setting, each $y$ belongs to one and only one relative stratum of $Y$. Therefore we have a partition of~$Y$ into its relative strata, which we call a \emph{relative stratification} of~$Y$ (see Figure \ref{fig_srata} for a $3$--dimensional example). A relative stratification differs from a stratification as usually defined, where a point on the boundary and a point in the interior of the manifold necessarily belong to different strata, which are smooth submanifolds without boundary. In Definition \ref{corners_withbdy}, the relative strata are smooth manifolds with boundary, neatly embedded in $W$. The intersection of a relative $(m-k)$--stratum of $Y$ with $\partial W$ is an $(m-k-1)$--dimensional submanifold of $\partial W$. Considered as a submanifold with corners of $\partial W$, $Y \cap \partial W$ inherits a stratification from the relative stratification of $Y$.
\end{Remark}
\begin{figure}[h!]
\[
\begin{tikzpicture}[scale=0.3]
\node (mypic) at (0,0) {\includegraphics[scale=0.5]{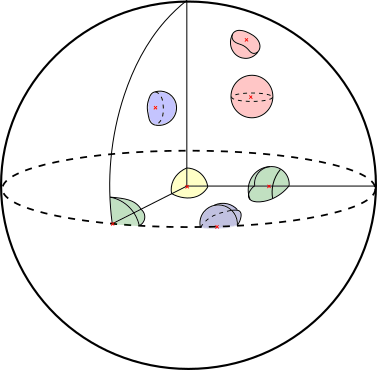}};
\node (a) at (-1,0.8) {$x$};
\node (a) at (-2.2,2.7) {$z_0$};
\node (a) at (-3.5,-2.2) {$y_0$};
\node (a) at (1.5,-2.4) {$z_1$};
\node (a) at (3.7,2.7) {$w_0$};
\node (a) at (2.5,0.8) {$y_1$};
\node (a) at (3.9,6) {$w_1$};
\end{tikzpicture}
\]
\caption{Relative strata of a submanifold with corners of the unit $3$--ball; $x$ belongs to the relative $0$--stratum; $y_0$ and $y_1$ to the relative $1$--stratum; $z_0$ and $z_1$ to the relative $2$--stratum; $w_0$ and $w_1$ to the relative $3$--stratum}
\label{fig_srata}
\end{figure}
Proposition \ref{prop_smoothing} readily adapts to manifolds with boundary. Following the proof of this proposition, we define a vector field $\chi$ in $W$, which obeys the following conditions.
\begin{itemize}
\item This vector field, when restricted to $\partial W$, is tangent to $\partial W$. Seeing $Y \cap \partial W$ as a submanifold with corners of $\partial W$, $\chi_{\lvert \partial W}$ is pointing inside $Y \cap \partial W$ on $\partial (Y \cap \partial W)$, and is transverse to all the strata of $\partial(Y \cap \partial W)$.
\item On the interior of $W$, $\chi$ is pointing inside $Y$ on $\partial Y$, and is transverse to all the strata of the boundary of $Y \cap Int(W)$.   
\end{itemize} 
Such a vector field induces an ambient isotopy $F$ of $W$, smooth on each stratum of $Y$, such that~$F(Y,1)$ is a subset of $Y$, as well as a submanifold with corners of $W$ with only two non-empty relative strata. Denoting by $m$ the dimension of $Y$, and by $m(Y)$ its top-dimensional relative stratum, the $m$--dimensional relative stratum of $F(Y,1)$ is~$F(m(Y),1)$; the $(m-1)$--dimensional relative stratum of $F(Y,1)$ is $F(Y \setminus m(Y) ,1)$; all the lower dimensional relative strata are empty. We define (up to ambient isotopy) the \emph{relative smoothing} of $Y$ as $s_r(Y)=F(Y,1)$, with $F$ such an isotopy. 
\par We can smoothen the remaining corners by pushing $F(Y,1) \cap \partial W$ inside $F(Y,1)$, along a vector field pointing inwards on $F(Y,1) \cap \partial W$ (see Figure \ref{fig_smoothing}). We define, up to ambient isotopy, the \emph{smoothing} $s(Y)$ of $Y$ as the smooth submanifold thus obtained. Note that the smoothing of $Y$ is included in the relative smoothing of $Y$ and in the interior of $W$. In this article, we use both versions of smoothings.
\begin{figure}[h!]
\[
\begin{tikzpicture}[scale=0.3]
\node (mypic) at (0,0) {\includegraphics[scale=0.5]{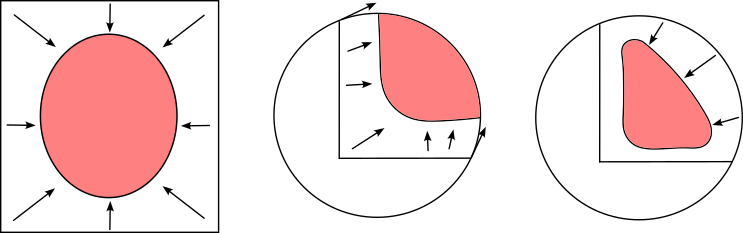}};
\end{tikzpicture}
\]
\caption{Left: smoothing of a rectangle in $\mathbb{R}^2$; Middle: relative smoothing of a submanifold with corners of a disk; Right: smoothing of the same submanifold}
\label{fig_smoothing}
\end{figure} 
\begin{Remark}
\label{rmk_smooth_product}
In this article, we also need to smoothen the corners of the product $X \times B$, with~$X$ a smooth manifold with boundary and $B$ a smooth closed ball. This can be done by considering~$B$ as a subset of a bigger open ball~$B'$. Then $X \times B$ is embedded as a submanifold with corners of the product $X \times B'$, which is a smooth manifold with boundary. Now we can smoothen the corners of~$X \times B$ considered as a submanifold of~$X \times B'$.  
\end{Remark} 
\subsection{Multisections} 
\begin{Definition}
\label{def_multisection} 
An $n$--section of a smooth, connected, oriented, closed $(n+1)$--manifold~$W$ is a decomposition $W= \bigcup_{i=1}^n W_i$, such that, for every non-empty subset $I$ of $\lbrace 1,...,n \rbrace$, the intersection $W_I = \bigcap_{i \in I} W_i$ is an $(n- \lvert I \rvert +2)$--dimensional submanifold with corners of~$W$ verifying the following conditions.
\begin{itemize}
\item For every $I$ and $\lvert I \rvert \leq k \leq n$, the $(n - k +2)$--stratum of $W_I$ is the disjoint union of the interiors of the $W_J$'s, such that $J \supseteq I$ and $\lvert J \rvert =k$. 
\item If $I= \lbrace1,...,n \rbrace$, $W_{1...n}$ is a closed smooth surface.
\item If $\lvert I \rvert \leq n-1$, the smoothing of $W_I$ is diffeomorphic to an $(n- \lvert I \rvert +2)$--dimensional handlebody.
\end{itemize}
We call the intersection $W_{1...n}$ the \emph{central surface} of the multisection, its genus the \emph{genus of the multisection}, and one $W_I$ a \emph{piece} of the multisection. The union of the $k$--dimensional pieces is the \emph{$k$--spine} of the multisection. 
\end{Definition}
A schematic for a trisection and a quadrisection is featured on Figure \ref{fig_schematic_multisection}. The boundaries of the balls that represent each manifold are dashed, to emphasize that these manifolds are closed.
\begin{figure}[h!]
\[
\begin{tikzpicture}[scale=0.3]
\node (mypic) at (0,0) {\includegraphics[scale=0.5]{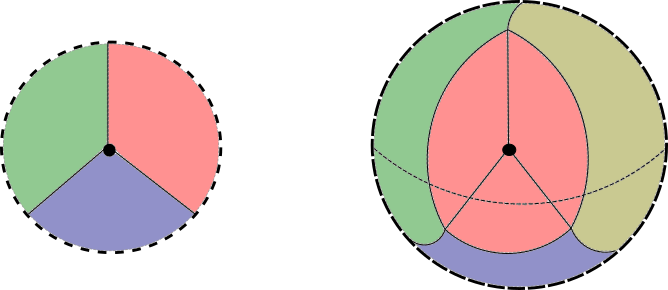}};
\node (a) at (-7,1) {$W_1$};
\node (a) at (-13,1) {$W_3$};
\node (a) at (-10,-3) {$W_2$};
\node (a) at (6,1) {$W_1$};
\node (a) at (3,1) {$W_3$};
\node (a) at (13,1) {$W_4$};
\node (a) at (8,-5.5) {$W_2$};
\end{tikzpicture}
\]
\caption{Schematic for a trisection (left) and a quadrisection (right)}
\label{fig_schematic_multisection}
\end{figure}
\par Let $I \subset \lbrace 1,...,n \rbrace$. Because the pieces of a multisection are submanifolds with corners, every point in the interior of $W_I$ admits a neighborhood that is decomposed according to the following \emph{local model}.
\begin{Definition}
\label{def_std_model}
Let $\Delta^{k-1}$ be the standard $(k-1)$--simplex, with vertices $\lbrace p_i \rbrace_{1 \leq i \leq k}$ and barycenter~$p_0$. Let $\Delta^{k-1} = \cup_{i=1}^k \Delta_i$ be the subdivision of $\Delta^{k-1}$, where $\Delta_i$ is the convex hull of $\lbrace p_j \mid 0 \leq j \leq k \; \mathrm{and} \; j \neq i \rbrace$. A \emph{standard $k$--decomposition} of $\mathbb{R}^{k-1}$ is a decomposition $\mathbb{R}^{k-1} = \cup_{i=1}^k A_i$, such that $\cap_{i=1}^k A_i = \lbrace 0 \rbrace$, and that there exists a permutation $\sigma$ of $\lbrace 1,...,k \rbrace$, and a diffeomorphism $\phi: \mathbb{R}^{k-1} \rightarrow Int(\Delta^{k-1})$ which sends $A_i$ to $\Delta_{\sigma(i)} \cap Int(\Delta^{k-1})$ for all $i$.   
\end{Definition} 
Then the local model around around $x \in Int({W}_I)$ is a neighborhood $U_x$ of $x$ in $W$, together with a standard $k$--decomposition $\mathbb{R}^{k-1} = \cup_{i=1}^k A_i$, and a diffeomorphism from $U_x$ to $\mathbb{R}^{n-k+2} \times \mathbb{R}^{k-1}$ sending $W_i \cap U_x$ to $\mathbb{R}^{n-k+2} \times A_i$ for all $i \in I$.
\par As an illustration, and for further use, we recall some basic examples tho\-roughly described in \cite{aribi2023multisections}.
\begin{Example}
\label{ex_sn_g0}
For $n \geq 1$, let $S^{n+1}$ be the unit sphere of $\mathbb{R}^{n+2}$ and $\pi: S^{n+1} \rightarrow \mathbb{R}^{n-1}$ the projection map forgetting the last three coordinates. Consider a standard $n$--decomposition of $\mathbb{R}^{n-1}$ and pull it back by $\pi$. This gives a genus--$0$ multisection of $S^{n+1}$. The central surface is a $2$--sphere. All the other pieces are balls of varying dimensions.
\end{Example}
\begin{Definition}
\label{ex_sn_std}
Consider the projection $\mathbb{R}^{k+1} \rightarrow \mathbb{R}^{k-1}$ forgetting the last two coordinates. The
ima\-ge of the unit sphere $S^k$ is the unit ball $B^{k-1}$, which inherits, by inclusion, a decomposition from a standard $k$--decomposition of $\mathbb{R}^{k-1}$. We call the preimage of this decomposition a \emph{standard} decomposition of $S^k$.  
\end{Definition}
Denoting by $S^k = \cup_{i=1}^k B_i$ a standard decomposition of $S^k$, we easily compute that $\cap_{i \in I} B_i$ is a $(k - \lvert I \rvert + 1)$--ball for every non-empty $I \subset  \lbrace 1, . . . , k \rbrace$ and that $\cap_{i=1}^k B_i$ is a circle. 
\begin{Example}
\label{ex_sn_g1}
For $0<k<n$, consider $S^{n+1} = \partial B^{n+2} \simeq \partial (B^{k+1} \times B^{n-k+1}) = (S^{k} \times B^{n-k+1}) \cup (B^{k+1} \times S^{n-k})$, where $\simeq$ means we have smoothened corners at $S^{k}\times S^{n-k}$ as in Remark \ref{rmk_smooth_product}. Let~$S^k = \cup_{i=1}^k A_i$ and $S^{n-k} = \cup_{i=1}^{n-k} B_i$ be standard decompositions of $S^k$ and $S^{n-k}$. Then $S^{n+1} = (\cup_{i=1}^k A_i \times B^{n-k+1}) \cup (\cup_{i=1}^{n-k} B_i \times B^{k+1})$ is a genus--$1$ multisection of $S^{n+1}$.    
\end{Example}
\begin{Example}
\label{ex_s1sn}
We obtain a genus--$1$ multisection of $S^1 \times S^n$ by taking the standard decomposition of $S^n$ and multiplying each factor by $S^1$.
\end{Example}
\section{Relative multisections and relative fibrations}
\label{sec_rel_mult}
\subsection{Relative multisections}  
\begin{Definition}
\label{def_smooth_comp_body} 
A \emph{$k$--dimensional smooth compression body} is a connected manifold $C$ cons\-tructed as follows.
\begin{itemize}
\item take the product $Y \times [0,1]$, with $Y$ a compact smooth $(k-1)$--manifold; 
\item smoothly attach $k$--dimensional $1$--handles along $Y \times \lbrace 1 \rbrace$; 
\item smoothen corners at $\partial Y \times \lbrace 0 \rbrace$ and at $\partial Y \times \lbrace 1 \rbrace$. 
\end{itemize}
The image of $Y \times \lbrace 0 \rbrace$ under the smoothing is the \emph{negative boundary} $\partial_- C$ of the compression body; the closure of its complement in $\partial C$ is the \emph{positive boundary} $\partial_+ C$ of the compression body.
\end{Definition}
\begin{figure}[h!]
\[
\begin{tikzpicture}[scale=0.3]
\node (mypic) at (0,0) {\includegraphics[scale=0.5]{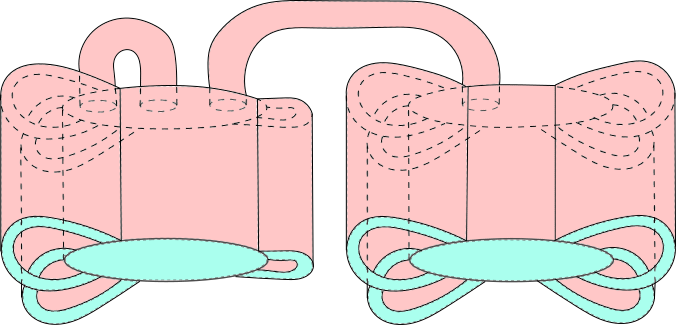}};
\end{tikzpicture}
\]
\caption{Example of a tridimensional compression body before smoothing, with its negative boundary in blue and its positive boundary in pink}
\label{fig_comp_bdy}
\end{figure}
\begin{Definition}
\label{def_p_based}
Let $P$ be a compact surface with no closed component. For $k \geq 3$, let $s(P \times B^{k-3})$ be the smoothing of the product of $P$ with the closed $(k-3)$--ball. A \emph{$P$--based} $k$--dimensional compression body is a smooth compression body $C$, where we have taken $Y = s(P \times B^{k-3})$ in Definition \ref{def_smooth_comp_body}. The image of $\partial P \times \lbrace 0 \rbrace$ under $s$ is a link, which we call the \emph{binding} of~$\partial_{-}C$. 
\end{Definition} 
\begin{Remark}
\label{rmk_dec_neg_bdy_cmp}
Because $P$ has no closed component, a $P$--based compression body $C$ is a handlebody, together with a specified decomposition of its boundary. The negative boundary of $C$ is naturally decomposed as a union of copies of $P$, properly embedded in~$\partial C$, with disjoint interiors, and intersecting two by two along the binding. Therefore, one can think of the smoothing of $\partial_- C$ as the product $P \times B^{k-3}$, with the $\partial P \times B^{k-3}$ part of its boundary collapsed to $\partial P \times \lbrace 0 \rbrace$. The positive boundary of a $P$--based compression body is diffeomorphic to a connected sum, whose summands are the components of the smoothing of $P \times B^{k-3}$ and a number of copies of $S^1 \times S^{k-2}$.   
\end{Remark}      
\begin{Definition}
\label{def_rel_multi}
A relative $n$--section of a smooth, connected, oriented, compact manifold~$W$ of dimension $n+1$, with non-empty boundary, is a decomposition $W= \bigcup_{i=1}^n W_i$, such that, for every non-empty subset $I$ of $\lbrace 1,...,n \rbrace$, the intersection $W_I = \bigcap_{i \in I} W_i$ is an $(n- \lvert I \rvert +2)$--dimensional submanifold with corners of $W$, verifying the following conditions. 
\begin{itemize}
\item For every $I$ and $\lvert I \rvert \leq k \leq n$, the relative $(n - k +2)$--stratum of $W_I$ is the disjoint union of the top-dimensional relative strata of the $W_J$'s, such that $J \supseteq I$ and $\lvert J \rvert = k$.
\item If $I = \lbrace 1,...,n \rbrace$, $\Sigma = W_{1...n}$ is a smooth, connected, compact surface with non-empty boun\-dary $\partial \Sigma = \Sigma \cap \partial W$.
\item There exists a compact surface $P$, with no closed component, such that, for all $I$ with~$\lvert I \rvert \leq n-1$:
\begin{itemize}
\item there exists a $P$--based $(n - \lvert I \rvert +2)$--dimensional compression body $C_I$, and a diffeomorphism that sends the smoothing $s_I(W_I)$ to $C_I$;
\item this diffeomorphism sends $s_I(W_I \cap \partial W)$ to $\partial_- C_I$ and $s_I(\partial \Sigma)$ to the binding of $C_I$.
\end{itemize}
\end{itemize} 
We call the intersection $W_{1...n}$ the \emph{central surface} of the multisection, its genus the \emph{genus of the multisection}, the surface $P$ the \emph{page} of the multisection, and one $W_I$ a \emph{piece} of the multisection. We call $W_I \cap \partial W$ the \emph{negative boundary} $\partial_- W_I$ of $W_I$. The closure of the complement of~$\partial_- W_I$ in~$\partial W_I$ is the \emph{positive boundary} $\partial_+ W_I$ of $W_I$. The union of the $k$--dimensional pieces is the \emph{$k$--spine} of the relative multisection. 
\end{Definition} 
A schematic for a relative multisection is featured on Figure \ref{fig_schematic_relative_multisection}.\begin{Remark}
\label{rmk_PL}
The definition of a (relative) multisection immediately adapts to the PL category; in this context, we do not specify corners, as the pieces are PL-homeomorphic to PL model pieces.
\end{Remark}
\begin{figure}[h!]
\[
\begin{tikzpicture}[scale=0.3]
\node (mypic) at (0,0) {\includegraphics[scale=0.5]{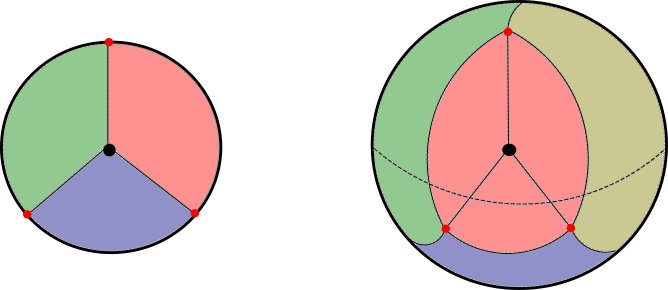}};
\node (a) at (-7,1) {$W_1$};
\node (a) at (-13,1) {$W_3$};
\node (a) at (-10,-3) {$W_2$};
\node (a) at (6,1) {$W_1$};
\node (a) at (3,1) {$W_3$};
\node (a) at (13,1) {$W_4$};
\node (a) at (8,-5.5) {$W_2$};
\end{tikzpicture}
\]
\caption{Schematic for a relative trisection (left) and a relative quadrisection (right)}
\label{fig_schematic_relative_multisection}
\end{figure} 
\begin{Remark}
\label{rmk_dec_pos_bdy}
Let $\lvert I \rvert \leq n-2$. Denote by $s_r(W_I)$ the relative smoothing of $W_I$. Then the relative $n$--section of $W$ induces a relative $(n- \lvert I \rvert)$--section of $s_r(\partial_+ W_I)$, given by the union, for $J \supset I$ with $\lvert J \vert = \lvert I \rvert +1$, of the $s_r(W_J)$'s (see Figure \ref{fig_retraction}).  
\end{Remark}
\begin{figure}[h!]
\[
\begin{tikzpicture}[scale=0.3]
\node (mypic) at (0,0) {\includegraphics[scale=0.5]{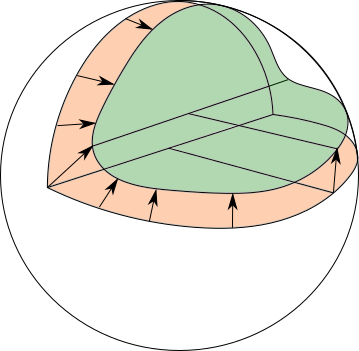}};
\end{tikzpicture}
\]
\caption{The relative multisection of $W$ induces a relative multisection of the image of~$\partial_+W_I$ by the relative smoothing of $W_I$ \\ \footnotesize{A flat angle appears in order to make the picture easier to understand}}
\label{fig_retraction}
\end{figure} 
A relative $2$--section of a compact $3$--manifold is a balanced sutured Heegaard splitting, as defined by \textcite{goda1992heegaard} (see also Juh\'asz \cite{juhasz2006holomorphic} for a concise exposition). A relative $3$--section of a compact manifold of dimension $4$ corresponds to a relative trisection, as defined in \cite{gay2016trisecting,castro2016relative,castro2018diagrams,castro2018trisections}. By specifying that the pieces of a relative multisection are submanifolds with corners, endowed with a given relative stratification, we have fixed a local model around each point (similarly as in the closed case). This local model is described below as a direct consequence of Definition \ref{def_rel_multi}.
\begin{Proposition}
\label{local_model}
Let $W= \bigcup_{i=1}^n W_i$ be a relative $n$--section of a smooth manifold $W$. Let $I$ be a subset of $\lbrace 1,...,n \rbrace$ with $\lvert I \rvert = k$. Then:
\begin{itemize}
\item for every $x$ in the intersection of the top dimensional relative stratum of $W_I$ with the interior of $W$, there is a neighborhood $U_x$ of $x$ in $W$, a standard $k$--decomposition $\mathbb{R}^{k-1} = \cup_{i=1}^k A_i$, and a diffeomorphism from $U_x$ to $\mathbb{R}^{n-k+2} \times \mathbb{R}^{k-1}$ which sends $W_i \cap U_x$ to $\mathbb{R}^{n-k+2} \times A_i$ for every $i \in I$;
\item for every $y$ in the intersection of the top dimensional relative stratum of $W_I$ with the boundary of~$W$, there is a neighborhood $U_y$ of $y$ in $W$, a standard $k$--decomposition $\mathbb{R}^{k-1} = \cup_{i=1}^{k} A_i$, and a diffeomorphism from $U_y$ to $(\mathbb{R}^{n-k+1} \times [0,\infty)) \times \mathbb{R}^{k-1}$ sending $W_i \cap U_y$ to $(\mathbb{R}^{n-k+1} \times [0, \infty)) \times A_i$ for every $i \in I$.
\end{itemize}
\end{Proposition}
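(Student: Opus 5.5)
The plan is to read the local model directly off the stratification conditions in Definition \ref{def_rel_multi} together with the chart condition in Definition \ref{corners_withbdy}. I would argue by induction on $k = \lvert I \rvert$, treating interior points and boundary points in parallel.

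\textbf{Base case $k=1$.} A point $x$ in the top relative stratum of $W_i$ has a chart in which $W_i$ is a neighbourhood of $x$. In the interior this is $\mathbb{R}^{n+1}$; on the boundary it is $\mathbb{R}^n \times [0,\infty)$. Since $\mathbb{R}^0$ with $A_1=\mathbb{R}^0$ is a standard $1$--decomposition, both claims hold trivially.

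\textbf{Inductive step, interior points.} Take $x$ in the top relative stratum of $W_I$, with $\lvert I\rvert = k$, and $x \notin \partial W$.
\begin{enumerate}
\item By Definition \ref{corners_withbdy}, $x$ lies in the smooth open $(n-k+2)$--dimensional stratum $S$ of $W_I$. I pick a tubular neighbourhood $U_x \cong S' \times \mathbb{R}^{k-1}$, with $S' \cong \mathbb{R}^{n-k+2}$ a small disk in $S$ around $x$.
\item For each $j \in I$, let $J = I \setminus \{j\}$. The first bullet of Definition \ref{def_rel_multi} says that near $x$ the relative $(n-k+2)$--stratum of $W_J$ is exactly $\mathrm{Int}(W_I)$. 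So $W_J$ is a submanifold with corners whose codimension-$(k-1)$ corner stratum, relative to its own dimension $n-k+3$, passes through $x$.
\item Applying the chart condition to each $W_J$, and the stratification condition to all $W_L$ with $L \subset I$, I would show that each fiber $\{s\}\times\mathbb{R}^{k-1}$ meets the $W_i$, $i \in I$, in a decomposition whose face poset coincides with that of the simplicial subdivision $\cup \Delta_i$. That is, $A_L = \cap_{i\in L} A_i$ is a cone of dimension $k - \lvert L\rvert$, and these cones are glued as in the model.
\item Since $x$ lies in no $W_j$ with $j \notin I$ (being in the top stratum of $W_I$), shrinking $U_x$ shows that the sets $W_i$, $i \in I$, cover $U_x$.
\end{enumerate}

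\textbf{Inductive step, boundary points.} For $y \in \partial W$ I run the same argument, now using the boundary charts of Definition \ref{corners_withbdy}. In those charts every piece has the product form $(\cdots)\times[0,1)$. This supplies the extra factor $[0,\infty)$, and the argument reduces to the interior case in $\partial W$ times a collar.

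\textbf{Main obstacle.} The delicate step is step 3: upgrading the combinatorial fact that the pieces meet like the faces of the subdivided simplex to an actual diffeomorphism $\phi\colon \mathbb{R}^{k-1}\to \mathrm{Int}(\Delta^{k-1})$ carrying each $A_i$ onto $\Delta_{\sigma(i)}$, chosen smoothly in the $\mathbb{R}^{n-k+2}$ parameter. I would first straighten, one at a time and inductively in dimension, the lower-dimensional sets $A_L$ to linear cones, using the corner charts of Definition \ref{corners_withbdy}. For $k=2$ it suffices to note that two half-spaces meet along a hyperplane. Here I would rely on the paper's framing of the local model as a direct consequence of the definition, treating the corner charts as already compatible with this combinatorics.
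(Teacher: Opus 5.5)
Your route is the paper's route. The paper gives no argument beyond saying that, because the pieces are submanifolds with corners with prescribed relative strata, the local model is a direct consequence of Definition \ref{def_rel_multi}; so the simultaneous straightening you isolate in step 3 and then take for granted is exactly the point the paper also leaves implicit. Two small points: in step 2, $x$ is a codimension-$1$ boundary point of the $(n-k+3)$--dimensional piece $W_{I\setminus\{j\}}$, the codimension-$(k-1)$ corners at $x$ being those of the single pieces $W_i$; and your induction hypothesis is never actually used, so the induction is only cosmetic.
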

\begin{Example}
\label{ex_b_n}
Let $B^{n+1} = \lbrace (x_1,...,x_{n+1}) \in \mathbb{R}^{n+1} \mid \Sigma_{i=1}^{n+1} x_i^2 \leq 1 \rbrace$ be the closed unit ball of~$\mathbb{R}^{n+1}$. Denote by $B$ the $(n-1)$--dimensional ball $\lbrace (x_1,...,x_{n-1},0,0) \in B^{n+1} \rbrace$, and by $\pi$ the projection from $B^{n+1}$ to $B$. Denote by $\mathbb{R}^{n-1} = \cup_{i=1}^n A_i$ a standard $n$--decomposition of~$\mathbb{R}^{n-1}$. Consider the decomposition $B = \cup_{i=1}^n B_i$, with $B_i = B \cap A_i$. Then the decomposition $B^{n+1} = \cup_{i=1}^n \pi^{-1} (B_i)$ is a relative $n$--section of the closed $(n+1)$--dimensional unit ball. 
\end{Example}
\begin{Example}
\label{ex_sigma_bn}
Let $W$ be the smoothing of $\Sigma \times B^{n-1}$, with $\Sigma$ a connected compact surface with non-empty boun\-dary. Consider a decomposition $B^{n-1} = \cup_{i=1}^n B_i$, induced by a standard $n$--decomposition of $\mathbb{R}^{n-1}$, as in Example \ref{ex_b_n}. Then $W \simeq \cup_{i=1}^n \Sigma \times B_i$ is a relative \mbox{$n$--section} of $W$. Note that (identifying $B^{n+1}$ with $D^2 \times B^{n-1}$) the relative multisection of Example \ref{ex_b_n} is a special case of this construction, with $\Sigma=D^2$.
\end{Example}
\begin{Example}
\label{ex_multi_i_times_sn} 
Let $W=I \times S^n$. Start from the standard decomposition $S^n = \cup_{i=1}^{n}B_i$, then set $W_i=I \times B_i$. This defines a relative $n$--section of $W$. The central surface is a cylinder, the $3$--dimensional intersections are obtained from this cylinder by gluing in a single $2$--handle. 
\end{Example}  
\begin{Example}
\label{ex_xn_minus_ball}
Let $Y=\cup_{i=1}^n Y_i$ be a multisected closed $(n+1)$--manifold. Consider the compact manifold with spherical boundary $W= Y \setminus Int(B)$, where $B$ is an $(n+1)$--ball. We can assume that~$B$ is a closed tubular neighborhood of a point belonging to the central surface of the multisection, such that $B \cap Y_I$ is an $(n- \lvert I \rvert +2)$--dimensional ball for every~$I$. Then $W = \cup_{i=1}^n \overline{Y_i \setminus B}$ is a relative $n$--section of $W$, whose central surface is the central surface of the multisection of $Y$ minus the interior of a disk. 
\end{Example}
\begin{Example}
\label{ex_connected_sum}
The connected sum of two (relatively or not) multisected manifolds is again a multisected manifold (by taking the connected sum on a neighborhood of a point on each central surface). The central surface is the connected sum of the two central surfaces. 
\end{Example}

\begin{Remark}
\label{rmk_stab}
A relative multisection of a manifold with boundary is not unique: the \emph{stabilization} moves defined in \cite{aribi2023multisections} adapt readily to the relative setting, as they can be performed inside the interior of the manifold. Stabilizing a relative multisection is taking the connected sum of this multisection with a genus--$1$ multisection of the sphere. We refer to \cite[Section 5]{aribi2023multisections} for a thorough description. 
\end{Remark}

\begin{Remark}
\label{rmk_decomp_bdy_piece}
As we mentioned in Remark \ref{rmk_dec_pos_bdy}, a relative multisection $\mathcal{M}$ induces a relative multisection of the smoothing of the positive boundary of the $k$--dimensional pieces, with $k \geq 4$. Denote by $C$ such a piece, and by $P = \sqcup_{i=1}^{\ell} P_i$ the page of $\mathcal{M}$, with each $P_i$ connected. For every~$i$, denote by $\mathcal{M}_i$ the multisection of $P_i \times B^{k-3}$ of Example \ref{ex_sigma_bn}. Then the relative multisection induced by $\mathcal{M}$ on the smoothing of~$\partial_+ C$ is the connected sum of the $\mathcal{M}_i$'s with a multisection~$\mathcal{M}'$ of connected sums of copies of $S^1 \times S^{k-2}$. For instance, $\mathcal{M}'$ could be connected sums of the multisection of $S^1 \times S^{k-2}$ described in Example \ref{ex_s1sn}, possibly with stabilizations (but we do not know if this is the only possibility). 
\end{Remark} 

\subsection{Relative fibrations}
\label{rel_fib}
As we shall see, a relative multisection induces a specific decomposition of the boundary of the multisected manifold, which we call a \emph{relative fibration}. 
\begin{Definition}
\label{def_rel_fibration}
A \emph{relative fibration} of a smooth closed $n$--manifold $Y$ is a pair $(\mathcal{B}, \pi)$ where:
\begin{itemize}
\item $\mathcal{B}$ is an oriented link in $Y$, called the binding of the relative fibration;
\item $\pi : Y \setminus \mathcal{B} \rightarrow S^{n-2}$ is a locally trivial smooth fibration, such that the closure of each fiber is a compact surface with boundary $\mathcal{B}$.  
\end{itemize}
\end{Definition}
We call the surface $P \simeq \overline{\pi^{-1}(x)}$, for every $x \in S^{n-2}$, the \emph{page} of the relative fibration. A relative fibration of a smooth surface is a \emph{balanced sutured decomposition} of this surface, i.e. a decomposition of this surface into two compact surfaces of same genus. A relative fibration of a \mbox{$3$--dimensional} smooth manifold is an open-book decomposition. Note that Definition \ref{def_rel_fibration} naturally adapts to PL manifolds. 
\begin{Proposition}
\label{prop_induced}
An $n$--section $W= \cup_{i=1}^n W_i$ induces a relative fibration on $\partial W$.
\end{Proposition}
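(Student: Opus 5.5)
The idea is to read the relative fibration of $\partial W$ directly off the decomposition $\partial W = \bigcup_{i=1}^n \partial_- W_i$, where $\partial_- W_i = W_i \cap \partial W$. This decomposition is modelled on the standard $n$--decomposition of $\mathbb{R}^{n-1}$ restricted to $S^{n-2}$. The binding will be $\mathcal{B} = \partial \Sigma = \Sigma \cap \partial W$. The fibration $\pi$ will be assembled piece by piece from the product structures $P \times B^{n-3}$ on the negative boundaries, and these pieces are glued by the local models of Proposition \ref{local_model}.

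\textbf{Step 1: local structure of $\partial W$.} By Definition \ref{def_rel_multi} and Proposition \ref{local_model}, $\partial W = \bigcup_i \partial_- W_i$ is itself a decomposition into submanifolds with corners, with $\partial_- W_I = W_I \cap \partial W$ of dimension $n-\lvert I\rvert+1$. Near a point of $\partial W$ lying in the top relative stratum of $W_I$, the decomposition of $\partial W$ is $\mathbb{R}^{n-k+1} \times A_i$ for a standard $k$--decomposition, where $k=\lvert I\rvert$. For $\lvert I\rvert = n$ we get $\partial_- W_{1\dots n} = \partial\Sigma$, a link $\mathcal{B}$. For $\lvert I\rvert\le n-1$, the definition gives a diffeomorphism of $s_I(\partial_- W_I)$ with $\partial_- C_I$, which by Remark \ref{rmk_dec_neg_bdy_cmp} is $P\times B^{n-\lvert I\rvert-1}$ with $\partial P \times B^{n-\lvert I\rvert-1}$ collapsed onto the binding $\partial P\times\{0\}$, and this identification sends $\partial\Sigma$ to the binding.

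\textbf{Step 2: the fibration on each piece.} On $\partial_- W_I \setminus \mathcal{B}$, which is identified with $\mathrm{Int}(P) \times B^{n-\lvert I\rvert-1}$ up to the corner smoothing, I take the projection to $B^{n-\lvert I\rvert -1}$. The fibers are open pages whose closures have boundary $\mathcal{B}$. I then identify $B^{n-\lvert I\rvert-1}$ with the face $\Delta_I$ of the boundary of the simplex $\partial\Delta^{n-1} \cong S^{n-2}$, subdivided as in Definition \ref{def_std_model}. This makes the pieces $\partial_-W_i$ correspond to the cells $\Delta_i \cap \partial \Delta^{n-1}$, which are $(n-2)$--balls meeting according to the same combinatorics as the $\partial_- W_I$.

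\textbf{Step 3: compatibility (main obstacle).} The real work is to choose the product structures coherently, so that on each $\partial_- W_I \subset \partial_- W_J$ (for $J \subset I$) the projection of $\partial_- W_J$ restricts to that of $\partial_- W_I$, followed by the inclusion of the corresponding face. I would proceed by downward induction on $\lvert I\rvert$. I start from the lowest-dimensional pieces, $\lvert I\rvert = n-1$, where $\partial_- W_I \cong P$ up to smoothing and the projection is to a point. At each inductive step the projection is already defined on $\partial(\partial_- W_I)\setminus\mathcal{B}$, which is a union of lower pieces. It is a fibration over the sphere $\partial B^{n-\lvert I\rvert-1}$ with fiber $\mathrm{Int}(P)$. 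The key points are two. First, the smoothing identification $s_I$ is a collar-type isotopy, so the fibration on the boundary extends over a collar, using the local model of Proposition \ref{local_model}, in which everything is a product with $A_i$. Second, $\partial_-W_I \cong P\times B$ restricts on the boundary to $P\times S$, so the given fibration on the boundary is a trivial $P$--bundle. One needs it to agree with the restriction of the product structure up to isotopy, and then extends it conewise over $B$. This agreement is where the subtlety lies. I would handle it by noting that, near $\mathcal{B}$, the local model around points of $\partial\Sigma$ (the case $k=n$) forces a standard picture $\mathcal{B}\times D^2 \times (\text{cone on } S^{n-2})$ in which the pages are radial. Away from $\mathcal{B}$, one can reparametrise the product structure on $\partial_- W_I$ to match the prescribed boundary fibration, since any $P$--bundle over $B$ is trivial and bundle isomorphisms over the boundary sphere extend after an isotopy inside a collar.

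\textbf{Step 4: conclusion.} Finally, the maps glue to a continuous $\pi: \partial W\setminus\mathcal{B}\to S^{n-2}$ that is locally trivial. Away from the corner strata this holds by construction, and near the corners it holds by the product local models, after smoothing along the strata as in Proposition \ref{prop_smoothing}. Each fiber closure is a copy of $P$ with boundary $\mathcal{B}$, so $(\mathcal{B},\pi)$ is a relative fibration with page $P$.
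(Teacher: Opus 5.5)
Your strategy is the paper's. You take $\mathcal{B}=\partial\Sigma$ and assemble $\pi$ from the product structures $\mathrm{Int}(P)\times(\text{ball})$ on the negative boundaries. The paper does this more coarsely. By induction $X_1\cup\cdots\cup X_{n-1}$ is one product over an $(n-2)$--ball and $X_n$ is another; two products glued along $S^{n-3}\times\mathrm{Int}(P)$ then form a bundle over $S^{n-2}$. The paper simply takes for granted that all these gluings are fiberwise.

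You correctly single out this compatibility as the crux, but Step~3 does not establish it. Write $m+1=n-\lvert I\rvert-1$. Let $\Psi$ be the identification $\partial_-W_I\cong P\times B^{m+1}$ given by Definition~\ref{def_rel_multi}, and $\Phi$ a trivialization of the fibration you have already built on $\partial(\partial_-W_I)$. Nothing in the definition forces $\Psi$ to respect the faces $\partial_-W_J$ or their fibers. So $\Phi\circ\Psi^{-1}$ is just some diffeomorphism of $\partial(P\times B^{m+1})$ preserving the binding, not a bundle isomorphism, and ``bundle isomorphisms extend after an isotopy in a collar'' does not apply. What you need is that $\Phi\circ\Psi^{-1}$, up to fiber-preserving maps, extends over the handlebody $P\times B^{m+1}$. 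That is the Laudenbach--Po\'enaru / Cavicchioli--Hegenbarth extension property of Remark~\ref{rmk_glue_hby}. It is known smoothly only in dimensions $4$ to $6$, and in PL in all dimensions $\geq 4$.

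Your inference ``the total space is $P\times S^m$, so the bundle is trivial'' is also a non sequitur. For $m\geq 2$ triviality does follow from Theorem~\ref{Earle}. For $m=1$ you must show that an open book with page $P$ on $\#^{\ell}S^1\times S^2$ has trivial monodromy, e.g.\ via $\pi_1$ and Baer as in Proposition~\ref{prop_exten_bdy}.

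In the smooth category this gap cannot be closed in general. Take the relative $7$--section $W_i=I\times B_i$ of $I\times S^7$ from Example~\ref{ex_multi_i_times_sn}. Choose a small ball $D\subset\mathrm{Int}(B_1)$, and reglue $I\times D$ by $\mathrm{id}_I\times\phi$, where $\phi\in\mathrm{Diff}^+(S^6)$ represents a nonzero element of $\Theta_7$. This yields $I\times\Sigma^7$ with $\Sigma^7$ exotic, and only $W_1$ changes:
\begin{itemize}
\item the smoothing of $(B_1\setminus\mathrm{Int}\,D)\cup_\phi D$ is still $B^7$ (absorb $\phi$ into a collar), so the new $s(W_1)$ is an $8$--ball;
\item by the disk theorem, its negative boundary and binding are standard.
\end{itemize}
Hence Definition~\ref{def_rel_multi} still holds. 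But by Theorem~\ref{prop_rel_trivial}, a simply connected $7$--manifold with a relative fibration is the standard $S^7$, so $\partial(I\times\Sigma^7)$ has no relative fibration.

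A correct proof therefore needs extra input. You could work in PL, or smoothly with $n\leq 6$, and feed the handlebody-extension theorems into your downward induction. Alternatively, build compatibility of the product structures along faces into the definition. The paper's own proof does not supply this either.
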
 
\begin{proof}
Let $\mathcal{B}=\Sigma \cap \partial W$, with $\Sigma$ the central surface of the multisection. Then each $X_i=(W_i \cap \partial W) \setminus \mathcal{B}$ is diffeomorphic to the product of the interior of $P$ with an $(n-2)$--ball with corners. If $i<n$, $X_i$ is attached to $\cup_{j=1}^{i-1}X_j$ along the product of the interior of $P$ with an $(n-3)$--ball with corners. Therefore, by induction, $\cup_{j=1}^{n-1}X_j$ is piecewise-diffeomorphic to the product of the interior of $P$ with an $(n-2)$--ball. Now the boundary of $\cup_{j=1}^{n-1}X_j$ is $\partial X_n$, which can be pushed and smoothened inside $X_n$ using a relative smoothing of $W_n$. This means that $\partial W \setminus \mathcal{B}$ is the smooth union of two products $B^{n-2} \times Int(P)$ along $S^{n-3} \times Int(P)$, therefore a smooth fibration over $S^{n-2}$, with fiber the interior of $P$. Finally, the boundary of the closure of each fiber in $\partial W$ is $\mathcal{B}=\Sigma \cap \partial W$, which concludes the proof. The argument adapts easily to the PL category.         
\end{proof}
\begin{Example}
\label{ex_rel_fib_sn}
For $n \geq 2$, consider the $n$--sphere $S^n = \lbrace (x_1,...,x_{n+1}) \in \mathbb{R}^{n+1}, \Sigma_{i=1}^{n+1} x_i^2 =1 \rbrace$. Denote by $\mathcal{B}$ the circle $\lbrace (0,...,0,x_n,x_{n+1}) \in S^n \rbrace$. Let $\pi: S^n \setminus \mathcal{B} \rightarrow S^{n-2}$ be the smooth projection defined by $\pi((x_1,...,x_{n+1}))= || (x_1,...,x_{n-1}) ||^{-1} (x_1,...,x_{n-1})$. Then $(\mathcal{B},\pi)$ is a relative fibration of~$S^n$ with binding a circle and page a disk. This relative fibration is induced on $\partial B^{n+1} = S^n$ by the relative $n$--section of $B^{n+1}$ described in Example \ref{ex_b_n}.
\end{Example}
\begin{Example}
\label{ex_rel_fib_s1_sn}
Fix $n \geq 3$ and consider the product $S^{n-1} \times S^1$. Choose two disjoint points~$x$ and~$y$ in $S^{n-1}$. Then $S^{n-1} \setminus \lbrace x,y \rbrace$ is diffeomorphic to $S^{n-2} \times (-1,1)$. We thus have a fibration of~$(S^{n-1} \setminus \lbrace x,y \rbrace ) \times S^1 \simeq (S^{n-2} \times (-1,1) ) \times S^1 \simeq S^{n-2} \times ( (-1,1) \times S^1)$. Therefore $S^{n-1} \times S^1$ admits a relative fibration, with page an annulus, and binding the two circles $ \lbrace x \rbrace \times S^1$ and~$\lbrace y \rbrace \times S^1$. This relative fibration is induced on $S^{n-1} \times S^{1} = \partial (S^{n-1} \times D^2)$ by the relative $n$--section of $S^{n-1} \times D^2$ described in Example \ref{ex_multi_sn1_times_d2}.
\end{Example}
\begin{Example}
\label{ex_rel_fib_bdy}
Let $P$ be a connected compact surface of genus $p$, with $b > 0$ boundary components. Let $\ell=2p+b-1$. For $n \geq 3$, we construct a relative fibration of $\#^{\ell} S^1 \times S^{n-1}$, with page~$P$. Consider $H = P \times B^{n-1}$, an $(n+1)$--dimensional handlebody (with corners) of genus $\ell$. We decompose its boundary as $\partial H = (P \times \partial B^{n-1}) \cup (\partial P \times B^{n-1}) = (P \times S^{n-2}) \cup (\partial P \times B^{n-1})$, and denote by $\mathcal{B}$ the $b$--components link $\partial P \times \lbrace 0 \rbrace \subset \partial P \times B^{n-1}$. This provides a diffeomorphism between $(\#^{\ell} S^1 \times S^{n-1}) \setminus \mathcal{B}$ and $Int(P) \times S^{n-2}$. This is the requested relative fibration, which corresponds to the decomposition induced by the relative $n$--section of~$P \times B^{n-1}$ of Example \ref{ex_sigma_bn}.
\end{Example}
It is well-known that any closed surface or $3$--manifold admits a relative fibration. However, it is easy to show that this is not the case for any $2n$--manifold $Y$, with $n \geq 2$. Using the relative fibration to compute the Euler characteristic of $Y$, we obtain $ \chi(Y) = \chi (S^{2n-2}) \chi (P)$. Therefore $\chi (Y) = 2 \chi (P)$. But, as $\chi(\mathbb{C}P^{2n} \# (\#^{n-1} T^{4n}))=3$ and $\chi(\mathbb{C}P^{2n} \# (\#^{n} T^{4n}))=1$, any integer, odd or even, is the Euler characteristic of a connected sum of copies of $\mathbb{C}P^{2n} \# (\#^{n-1} T^{4n})$ and~$\mathbb{C}P^{2n} \# (\#^{n} T^{4n})$. Restricting our attention to null-cobordant manifolds, which necessa\-rily have even Euler charac\-teristic, we still find a restriction because of the condition~$\chi(Y) \leq 2$: for instance, the Euler characteristic of $S^{2n} \times S^{2m} = \partial (B^{2n+1} \times S^{2m})$ is equal to $4$ for any positive $n$ and $m$, which shows that $S^{2n} \times S^{2m}$ cannot admit a relative fibration, and therefore that $B^{2n+1} \times S^{2m}$ cannot admit a relative multisection. We now show that, in fact, very few manifolds of dimension greater than~$3$ relatively fiber.     
\par The relative fibrations described in the previous examples were actually trivial, in the sense that the preimage of $S^{n-2}$ by the projection was diffeomorphic to the product $Int(P) \times S^{n-2}$. We will prove that this fact is general if $n \geq 4$. As we shall see, this immediately implies that the previous examples covered \emph{all} possible cases, i.e. that if $n \geq 4$, then the components of an $n$--manifold admitting a relative fibration are necessarily $S^n$ or connected sums of copies of $S^1 \times S^{n-1}$.  
\par We will use the following result, by Earle and Schatz. We denote by $\mathrm{Diff}(P, \partial)$ the group of diffeomorphisms of $P$ that are the identity on $\partial P$.
\begin{Theorem}[Earle and Schatz, \cite{earle1970teichmuller}]
\label{Earle}
Let $P$ be a smooth connected compact surface with non-empty boun\-dary. Then the components of $\mathrm{Diff}(P, \partial)$ are contractible. 
\end{Theorem}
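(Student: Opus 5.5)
The statement is classical and we only cite it, but here is how I would prove it: by induction on the complexity of $P$, via fibration sequences obtained by restricting diffeomorphisms to a properly embedded arc. The base case is Smale's theorem that $\mathrm{Diff}(D^2,\partial)$ is contractible.

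\emph{Setup.} Measure complexity by $c(P)=-\chi(P)+\#\{\text{components}\}$. Since $\mathrm{Diff}(P_1\sqcup P_2,\partial)=\mathrm{Diff}(P_1,\partial)\times\mathrm{Diff}(P_2,\partial)$ when the $P_i$ have non-empty boundary, it suffices to treat connected $P$, allowing disconnected surfaces along the way. If $P$ is a disk, the claim is Smale's theorem. Otherwise choose a properly embedded essential arc $a\subset P$ with endpoints on $\partial P$, for instance non-separating, or joining two distinct boundary components. Cutting $P$ along $a$ gives a surface $P'$ with $\chi(P')=\chi(P)+1$, all of whose components have non-empty boundary, so $c(P')<c(P)$ and the induction applies to $P'$.

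\emph{The fibration.} Let $\mathcal{E}$ be the space of smooth embeddings $[0,1]\to P$ that agree with $a$ near the endpoints and meet $\partial P$ only there. Restriction gives a map $\mathrm{Diff}(P,\partial)\to\mathcal{E}$. By the Cerf--Palais isotopy extension theorem it is a locally trivial fibration onto the union of components it hits. Its fiber over $a$ is the group of diffeomorphisms fixing $\partial P\cup a$ pointwise. After the standard reduction, namely fixing a collar of $a$, which costs a contractible space of choices of the germ, this fiber is homotopy equivalent to $\mathrm{Diff}(P',\partial)$. By induction every component of the fiber is contractible.

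\emph{The main obstacle.} One must show that each component of $\mathcal{E}$ is contractible, as in Gramain's work. I would first reduce to $\pi_0$: it suffices that $\pi_k(\mathcal{E},a)=0$ for $k\ge 1$. I would try the following argument. Take a family $S^k\to\mathcal{E}$. Put a hyperbolic metric with geodesic boundary on $P$, or else pass to the universal cover, where the lifts of $a$ are properly embedded arcs in a disk-like region. Then deform the family canonically to the unique geodesic arc in its relative homotopy class. Making this canonical straightening continuous in families and keeping it embedded is the delicate point. An alternative is a second induction: cut along a transverse arc and reduce arc spaces to arc spaces in disks, which are contractible by Smale's theorem again.

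\emph{Conclusion.} Given the above, the long exact homotopy sequence yields $\pi_k(\mathrm{Diff}(P,\partial),\varphi)=0$ for all $k\ge 1$ and all basepoints $\varphi$. For $k=1$ this uses that both $\pi_1$ of the fiber and $\pi_1$ of the base vanish. Now $\mathrm{Diff}(P,\partial)$ is a Fréchet manifold, hence has the homotopy type of a CW complex. Whitehead's theorem then shows that each component is contractible.
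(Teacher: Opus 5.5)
The paper gives no proof of this statement. It quotes Earle--Schatz, whose proof is analytic (Teichm\"uller theory), and points to Hatcher's Appendix~B for a topological proof. Your outline follows the topological route of Gramain and Hatcher, and its reduction steps are sound: the restriction fibration $\mathrm{Diff}(P,\partial P\cup a)\to\mathrm{Diff}(P,\partial)\to\mathcal{E}$, the identification of the fiber with $\mathrm{Diff}(P',\partial)$, the induction starting from Smale's theorem, the exact sequence and Whitehead's theorem. The gap is the one step that carries the content, namely that $\pi_k(\mathcal{E},a')=0$ for $k\geq 1$. You only label it as the obstacle, so as written the theorem is not proved. It is not a side lemma either. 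Granting the theorem for $P$ and $P'$, the same exact sequence gives back $\pi_k(\mathcal{E})=0$ for $k\geq 2$, so you have traded the statement for one of essentially the same depth.

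Neither of your two suggestions closes the gap as stated.

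\emph{Geodesic straightening.} Moving points along geodesics in $\widetilde{P}\subset\mathbb{H}^2$ towards the geodesic representative is equivariant, but it gives a homotopy, not an isotopy. Intermediate lifts can cross themselves or their deck translates, so the path leaves $\mathcal{E}$. This already happens in the plane: if an embedded arc has a later point directly to the left of an earlier one, linear interpolation to the straight chord produces a double point at some intermediate time. To make this idea work you would need a genuinely embedding-preserving flow, for example curve-shortening with fixed endpoints. You would also need control keeping the interior of the arc off $\partial P$, and convergence to the geodesic that is uniform over compact families. That is substantial analytic input, not a detail.

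\emph{Cutting along a transverse arc $b$.} Reducing to arcs in disks requires making a whole compact family $\{a_t\}$ disjoint from $b$, continuously in $t$. In a family, transversality to $b$ fails at tangency parameters, where intersection points are born and die. The bigon-removal choices used for a single arc also do not vary continuously in $t$. Carrying out this parametrized disjunction is exactly the work done in Gramain's and Hatcher's arguments, and it is what your proposal is missing.
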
 
There is an earlier version of this theorem for closed surfaces of genus at least $2$, due to Earle and Eells \cite{earle1969fibre}. For a topological proof of both versions, we refer to \cite[Appendix~B]{hatcher2011short}. Note that this theorem remains true when replacing diffeomorphisms by PL-homeomorphisms, as it is $2$--dimensional. The results and proofs until the end of this subsection are developed in the smooth category, but also apply to the PL category. 
\begin{Theorem}
\label{prop_rel_trivial}  
Let $Y$ be a connected, smooth $n$--manifold, admitting a relative fibration with page $P$ and binding $\mathcal{B}$. If $n=3$, then $Y$ is diffeomorphic to an abstract open-book with page $P$. If $n>3$, then $Y$ is diffeomorphic to $(P \times S^{n-2}) \cup_{\partial P \times S^{n-2}} (\partial P \times B^{n-1}) = \partial (P \times B^{n-1})$, via a diffeomorphism that sends $\mathcal{B}$ to $\partial P \times \lbrace 0 \rbrace$. In particular, if~$P$ has genus $p$ and $b>0$ boundary components, then~$Y \simeq  \#^{2p+b-1} (S^1 \times S^{n-1})$, with the convention $\#^0 (S^1 \times S^{n-1})=S^n$.
\end{Theorem}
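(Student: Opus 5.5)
The plan is to first understand the structure of $Y$ near the binding, then trivialize the fibration over $S^{n-2}$ using Theorem \ref{Earle}, and finally reassemble $Y$.

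\textbf{Near the binding.} Each component of $\mathcal{B}$ has a closed tubular neighborhood $N \cong \mathcal{B} \times B^{n-1}$, which is trivial because $\mathcal{B}$ is oriented and $Y$ is orientable. The closures of the fibers all have boundary $\mathcal{B}$, so I would arrange, after an isotopy of $\pi$ near $\mathcal{B}$, that on $N \setminus \mathcal{B} \cong \mathcal{B} \times (B^{n-1}\setminus\{0\})$ the map $\pi$ is the radial projection to $S^{n-2}$. This is the standard local model of an open book, and it can be obtained from the local triviality of $\pi$ together with the boundary condition on the fibers. Removing the open neighborhood leaves $Y_0 = Y \setminus Int(N)$, and $\pi$ restricts to a smooth fiber bundle $Y_0 \to S^{n-2}$ with fiber $P$. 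On $\partial Y_0 = \mathcal{B}\times S^{n-2}$ this bundle is trivialized as $\partial P \times S^{n-2}$.

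\textbf{Trivializing the bundle.} A bundle over $S^{n-2}$ with fiber $P$ and trivialized boundary is classified, by the clutching construction, by a class in $\pi_{n-3}(\mathrm{Diff}(P,\partial))$. For $n=3$ this is $\pi_0$, i.e.\ a monodromy, and $Y$ is then the abstract open book $(P,\phi)$ glued with $\partial P\times D^2$. This settles the case $n=3$.

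For $n>3$ we have $n-3\geq 1$. Since $S^{n-3}$ is connected, the clutching map lands in a single component of $\mathrm{Diff}(P,\partial)$, and Theorem \ref{Earle} says that component is contractible. The clutching map is therefore nullhomotopic, so $Y_0 \cong P\times S^{n-2}$ by a diffeomorphism respecting the boundary trivialization. Concretely, I would write $S^{n-2}=D_+\cup D_-$, trivialize over each disk, and observe that the transition $S^{n-3}\to \mathrm{Diff}(P,\partial)$ can be homotoped to the identity. The identity lies in the relevant component, because we may compose the trivialization over $D_-$ with a constant diffeomorphism. The homotopy can then be absorbed into a collar, which gives a fiber-preserving diffeomorphism.

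\textbf{Reassembling.} Gluing $N \cong \partial P\times B^{n-1}$ back onto $Y_0 \cong P\times S^{n-2}$ along $\partial P\times S^{n-2}$, compatibly with the radial model, gives $\partial(P\times B^{n-1})$ after smoothing corners. This diffeomorphism sends $\mathcal{B}$ to $\partial P\times\{0\}$. Finally, $P\times B^{n-1}$ is an $(n+1)$--dimensional handlebody with $2p+b-1$ one-handles, because $P$ deformation retracts onto a wedge of $2p+b-1$ circles. Its boundary is therefore $\#^{2p+b-1}S^1\times S^{n-1}$, or $S^n$ when $P$ is a disk.

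The main obstacle is the first step: justifying the radial local model of $\pi$ near $\mathcal{B}$ from the bare definition, which only constrains the closures of the fibers. I would first try to build a vector field argument that straightens the fibers near each binding circle. If that becomes too delicate, I would add this normal form as an implicit hypothesis, since it holds for all fibrations induced by Proposition \ref{prop_induced}.
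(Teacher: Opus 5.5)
Your proposal is correct and follows essentially the same route as the paper. Both arguments put $\pi$ in a normal form near the binding, remove a tubular neighborhood of $\mathcal{B}$, use a clutching map $S^{n-3}\to\mathrm{Diff}(P,\partial)$ that Theorem~\ref{Earle} trivializes when $n>3$, and reglue $\partial P\times B^{n-1}$. The step you single out, the radial model near $\mathcal{B}$, is handled in the paper by Lemma~\ref{lemma_fibration} plus a collar argument; that lemma simply asserts the local model (the diffeomorphisms $\psi_i$) rather than deriving it from the bare definition, so your treatment is at the same level of rigor.
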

\begin{Lemma}
\label{lemma_fibration}
Given a smooth relative fibration $(\mathcal{B}, \pi)$ of $Y$, there exists a local triviali\-zation $\lbrace(U_1, \phi_1),(U_2, \phi_2) \rbrace$ of the fibration $\pi: Y \setminus \mathcal{B} \rightarrow S^{n-2}$, satisfying the following requirement. Denoting by $\mathrm{proj}_P$ the projection from $(U_1 \sqcup U_2) \times P$ to $P$, each $\mathrm{proj}_P \circ \phi_i$ extends to a smooth map $f_i$ from $\pi^{-1}(U_i) \cup \mathcal{B}$ to $P$, such that $f_1$ and~$f_2$ agree on $\mathcal{B}$. Moreover, each $f_i$ restricts to a diffeomorphism from the closure of each fiber to $P$.
\end{Lemma}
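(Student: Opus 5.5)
The plan is to first understand the fibration near the binding, and then build the two trivializations over the two hemispheres of $S^{n-2}$ so that they are compatible with this local model.

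\textbf{Step 1: local model at the binding.} I will first establish a normal form for $Y$ near $\mathcal{B}$. The closure of every fiber is a compact surface with boundary exactly $\mathcal{B}$. Hence each component $K$ of $\mathcal{B}$ has a tubular neighborhood $N(K) \simeq K \times B^{n-1}$. I will choose it so that the restriction of $\pi$ to $N(K)\setminus K \simeq K \times (B^{n-1}\setminus\{0\})$ is, up to isotopy, the radial projection $(t,v)\mapsto v/\lvert v\rvert$. In this model the closure of each fiber meets $N(K)$ in a collar $K \times [0,1)\cdot u$, for some $u \in S^{n-2}$.

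To get this model, I will pick a smooth family of inward-pointing unit normals of the fiber closures along $K$. This defines a map $K \times S^{n-2} \to$ (unit normal bundle of $K$), and I will check that it is a diffeomorphism onto the sphere bundle. It is injective because distinct fibers are disjoint. It is a local diffeomorphism because $\pi$ is a submersion, and a local diffeomorphism between compact manifolds of the same dimension is a covering. I then use the exponential map to straighten the fiber collars. I expect this to be the main technical obstacle: the statement that the fibers approach $\mathcal{B}$ "nicely" has to be extracted from the hypothesis that the closures are smooth compact surfaces with boundary $\mathcal{B}$. If necessary I will interpret the definition as including this local regularity, as in the standard definition of open books.

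\textbf{Step 2: trivializations over the hemispheres.} Next I cover $S^{n-2}$ by two open sets $U_1, U_2$, each a slightly enlarged closed hemisphere, so each is contractible. Over a contractible base, a locally trivial fibration with fiber $\mathrm{Int}(P)$ is trivial. I will want a trivialization of the bundle of compact surfaces $\overline{\pi^{-1}(x)}$, not just of the open fibers. This amounts to a trivialization of a bundle with structure group the diffeomorphisms of $P$ restricted to a neighborhood of $\partial P$.

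I will first trivialize the collars of the fibers using the model of Step 1. Near $K$, the collar in the fiber over $u$ is identified with $K\times[0,1)$ independently of $u$. I will then trivialize the complement of the open collar, which is a bundle of compact surfaces over $U_i$ whose boundary is already trivialized. By contractibility of $U_i$, the extension exists, for instance by integrating lifts of coordinate vector fields that are tangent to the collar structure.

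This gives $\phi_i : \pi^{-1}(U_i)\to U_i\times \mathrm{Int}(P)$. Near $\mathcal{B}$, the map $\mathrm{proj}_P\circ\phi_i$ is $(t,r u)\mapsto (t,r)$ in collar coordinates, independent of $u$.

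\textbf{Step 3: extension and agreement on $\mathcal{B}$.} Define $f_i$ on $\mathcal{B}$ by $t\mapsto (t,0)\in\partial P$. In the polar model, $(t,v)\mapsto (t,\lvert v\rvert)$, the map is continuous, and it becomes smooth after reparametrizing the collar coordinate of $P$. For this I will choose the collar identification of $P$ so that the function $\lvert v\rvert$ corresponds to a smooth boundary-defining coordinate.

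Both $f_1$ and $f_2$ restrict on $\mathcal{B}$ to the same map, so they agree there. On each fiber closure $\overline{\pi^{-1}(u)}$, the map $f_i$ is the trivialization diffeomorphism on the interior and the collar identity near the boundary. It is therefore a diffeomorphism onto $P$.
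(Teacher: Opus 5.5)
Your overall route is the paper's. Its proof simply posits fiber-preserving parametrizations $\psi_i\colon (U_i\times P)/\!\sim\;\to \pi^{-1}(U_i)\cup\mathcal{B}$, collapsing each $\{x\}\times\partial P$ onto $\mathcal{B}$ by one and the same map for $i=1,2$. It then sets $\phi_i$ to be the inverse on $U_i\times \mathrm{Int}(P)$, and never constructs $\psi_i$. Your Steps 2--3 (a product model near $\mathcal{B}$, then an Ehresmann trivialization over each contractible $U_i$ relative to that collar) are a correct way to produce exactly such $\psi_i$, and they make $f_1=f_2$ on $\mathcal{B}$ automatic.

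The weak point is Step 1, whose argument does not work.
\begin{itemize}
\item Disjointness of the open fibers does not make your normal map injective. Two surfaces with common boundary $K$ and disjoint interiors can be tangent along $K$, e.g.\ $\{y=0,\,x\ge 0\}$ and $\{y=x^2,\,x\ge 0\}$, times $K$.
\item That $\pi$ is a submersion on $Y\setminus\mathcal{B}$ says nothing about how, or even whether continuously, the inward normals at $\mathcal{B}$ depend on $u$.
\end{itemize}
The conclusion itself can fail under Definition~\ref{def_rel_fibration} read literally. Start from the disk open book of $S^3$ with binding $K$, and on $K\times D^2$ replace $\pi$ by $\pi(t,re^{i\theta})=\Phi_r^{-1}(\theta)$. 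Here $\Phi_r$ is a smooth family of circle diffeomorphisms, equal to the identity for $r\ge 1/2$. For small $r$ it squeezes everything outside an arc of length $O(r)$ into an arc of length $O(r)$ around $\theta=0$; the exceptional arc shrinks towards, but never contains, a fixed point. It is arranged so that $\Phi_r(u)=r\,c_u(r)$ for small $r$, with $c_u$ smooth up to $r=0$. Every page closure is then still a smooth disk bounded by $K$ (near $K$ it is $\{(t,re^{irc_u(r)})\}$), but all of them are tangent along $K$. Since a diffeomorphism acts linearly on normal directions, no tubular neighborhood makes this $\pi$ radial. So your proof is complete only once the open-book normal form is added to the definition, as you propose. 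The paper does not address this point at all: its $\psi_i$ are just asserted, and Lemma~\ref{lemma_morse_bdy} later takes the radial model for granted. In examples like the one above the lemma still holds, but you have to build the collar of each page from its own parametrization $(t,r)\mapsto(t,re^{irc_u(r)})$, not from a uniform normal form.

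Finally, drop the reparametrization in Step 3. In your model, $\pi^{-1}(U_i)\cup\mathcal{B}$ is, transversally to $\mathcal{B}$, the cone over $U_i$. At the cone point the radial derivative of any ambiently smooth function is linear in $u$, so it cannot be positive on your $U_i$, which contain antipodal directions. Hence no $f_i$ is smooth at $\mathcal{B}$ in the ambient sense while restricting to diffeomorphisms on page closures. ``Smooth'' in the statement can only mean smooth on $\pi^{-1}(U_i)$ and on each page closure, and $(t,v)\mapsto(t,\lvert v\rvert)$ already is.
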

\begin{proof}
Let $(U_1,U_2)$ be an open cover of $S^{n-2}$, and for $i=1,2$, let $(U_i \times P) / \sim$ be the quotient space where $(x,q) \sim (y,q)$ for every $x$ and $y$ in $U_i$ and $q \in \partial P$. There exists a diffeomorphism~$\psi_i$ from $(U_i \times P) / \sim$ to $\pi^{-1}(U_i) \cup \mathcal{B}$, such that $\psi_i(x,Int(P))= \pi^{-1}(\lbrace x \rbrace)$ for every $x \in U_i$. Moreover, denoting by $\mathrm{proj}_i$ the projection from $U_i \times P$ to $(U_i \times P) / \sim$, we can require that $\psi_1 \circ \mathrm{proj}_1 (x,q) = \psi_2 \circ \mathrm{proj}_2(y,q)$ for every $(x,y,q) \in U_1 \times U_2 \times \partial P$. We thus obtain a smooth map $\psi_i \circ \mathrm{proj}_i$ from $U_i \times P$ to $\pi^{-1}(U_i) \cup \mathcal{B}$, whose restriction to $U_i \times Int(P)$ is a diffeomorphism between $U_i \times Int(P)$ and $\pi^{-1}(U_i)$. We denote by $\phi_i$ the inverse of this restriction. As it preserves the fibers of $\pi$, we obtain a local trivialization $\lbrace(U_1, \phi_1),(U_2, \phi_2) \rbrace$ which fulfills our requirement.
\end{proof}
\begin{proof}[Proof of Theorem \ref{prop_rel_trivial}]
Let $\lbrace(U_1, \phi_1),(U_2, \phi_2) \rbrace$ be a local trivialization of the fibration $\pi: Y \setminus \mathcal{B} \rightarrow S^{n-2}$, satisfying the requirement of Lemma \ref{lemma_fibration}, with associated maps $f_1$ and $f_2$. We can assume that $U_1 \cap U_2 \simeq S^{n-3} \times (-1,1)$. We start by constructing a tubular neighborhood $\nu (\mathcal{B)}$ of~$\mathcal{B}$ in~$Y$, such that the restriction $\tilde{\pi}$ of $\pi$ to $\overline{Y \setminus \nu (\mathcal{B}})$ is a locally trivial fibration, verifying $\tilde{\pi}^{-1}(x)= \pi^{-1}(x) \setminus Int(\pi^{-1}(x) \cap \nu (\mathcal{B})) \simeq P$ for every $x \in S^{n-2}$. Moreover, we want $\phi_1$ to agree with $\phi_2$ on $\nu (\mathcal{B}) \cap \pi^{-1}(U_1 \cap U_2)$. Let $C \simeq \partial P \times [0,1]$ be a collar of $\partial P$ in $P$, with $\partial P$ identified with $\partial P \times \lbrace 0 \rbrace$. Let $x \in U_1 \cap U_2$. Then each $f_i$ restricts to a diffeomorphism $\tilde{f}_i$ from $\overline{\pi^{-1}(x)}$ to $P$ (see Lemma \ref{lemma_fibration}). Up to modifying $\phi_2$ by isotopy preserving the fibers, we can consider that the inverses of $\tilde{f}_1$ and~$\tilde{f}_2$ agree on $C$. Therefore, for every $q \in \partial P$, $f_{1}^{-1}(\lbrace q \rbrace \times [0,1]) \cup f_{2}^{-1}(\lbrace q \rbrace \times [0,1])$ is an \mbox{$(n-1)$--ball} centered at $f_{1}^{-1}(q) = f_{2}^{-1}(q) $. This describes $f_{1}^{-1}(C) \cup f_{2}^{-1}(C)$ as a trivial bundle over $\mathcal{B}$, with fiber an \mbox{$(n-1)$--ball}: therefore it is a trivial tubular neighborhood of $\mathcal{B} = f_{1}^{-1}(\partial P) = f_{2}^{-1}(\partial P)$ in $Y$, fulfilling our first condition by construction. Moreover, $\phi_1$ agrees with $\phi_2$ on $\nu \mathcal{B} \cap \pi^{-1}(U_1 \cap U_2)$, as requested.    
\par We thus obtain a locally trivial fibration $\tilde{\pi}: \overline{Y \setminus \nu(\mathcal{B})} \rightarrow S^{n-2}$, with fiber a compact surface with boundary diffeomorphic to $P$. By construction, this fibration admits a local trivialization given by $\lbrace(U_1, \phi_1),(U_2, \phi_2) \rbrace$, where each $\phi_i$ is restricted to $\overline{Y \setminus \nu(\mathcal{B})}$. Therefore, $\overline{Y \setminus \nu(\mathcal{B})}$ is determined, up to diffeomorphism, by a smooth map from $S^{n-3} \times (-1,1)$ to $\mathrm{Diff}(P,\partial)$, as $\phi_1$ agrees with~$\phi_2$ on $\nu (\mathcal{B}) \cap \pi^{-1}(U_1 \cap U_2)$. In fact, the diffeomorphism type of the fibration is even determined by the isotopy class of the restriction of this map to the equator, i.e. by the isotopy class of a smooth map $F: S^{n-3}\rightarrow \mathrm{Diff}(P, \partial)$ (this characterization of the fibration is referred to as a \emph{clutching construction}). Note that we can always assume that $F(x)=\mathrm{Id}$ for some $x \in S^{n-3}$. Thus we have two situations. If~$n=3$, the total space of the fibration is the mapping torus $\Sigma_f$ of $f$, with $f$ a diffeomorphism of~$P$ which restricts to the identity on $\partial P$. Then $Y$ is diffeomorphic to the union of $\Sigma_f$ with solid tori, trivially glued to each boundary component: it is a tridimensional \emph{abstract open-book} (see, for instance, \cite{etnyre2006lectures}). If $n>3$, $S^{n-3}$ is connected. Then $F(S^{n-3})$ is included into the identity component of $\mathrm{Diff}(P,\partial)$, which is contractible, by Theorem \ref{Earle}. Therefore, the fibration is trivial: $Y$ is diffeomorphic to the union of $P \times S^{n-2}$ with copies of $S^1 \times B^{n-1}$, trivially glued to each boundary component. 
\end{proof}
Therefore, if $n \geq 4$, the data of a page $P$ with genus $p$ and $b$ boundary components determines a relatively fibered manifold $X_P \simeq (P \times S^{n-2}) \cup (\partial P \times B^{n-1}) \simeq \#^{2p+b-1}S^1 \times S^{n-1}$, up to diffeomorphism. 
\begin{Remark}
\textcite{ghanwat2025murasugi} define a $k$--open-book decomposition of an $n$--manifold as a fibration of the complement of an $(n-k-1)$--submanifold over $S^k$. Our relative fibration corresponds to the case $k=n-2$.
\end{Remark}
\begin{Remark}
\label{coro_fibration}
Let $n \geq 4$. The page $\Sigma_{p,b}$ of a relative fibration of $\#^{\ell}(S^1 \times S^{n-1})$ necessarily verifies $2p+b-1=\ell$, with $b>0$. Therefore, $S^n$ admits only one relative fibration: the relative fibration with page a disk, described in Example \ref{ex_rel_fib_sn}. All possible relative fibrations for $\#^{\ell}(S^1 \times S^{n-1})$, with $\ell >0$, are described in Example \ref{ex_rel_fib_s1_sn} (in particular, $S^1 \times S^{n-1}$ admits only one relative fibration, with page an annulus).
\end{Remark}
\subsection{Murasugi sum of relative fibrations}
\label{subsec_murasugi}
Let $P$ and $P'$ be two compact surfaces with boundary, together with two properly embedded arcs~$a \subset P$ and $b \subset P'$. Let $A \simeq a \times [0,1]$ and $B \simeq b \times [0,1]$ be two tubular neighborhoods of these arcs in $P$ and $P'$. We denote by $P \ast_{a,b} P'$ (or $P \ast P'$ when no confusion arises) the Murasugi sum of~$P$ and $P'$ over $a$ and $b$, i.e. the result of gluing $P$ and $P'$ along $A$ and $B$, with~$\partial a \times [0,1]$ identified to $b \times \lbrace 0,1 \rbrace$ and $\partial b \times [0,1]$ identified to $a \times \lbrace 0,1 \rbrace$. Recall that, if~$n=3$, the Murasugi sum of the abstract open-books $(P, \phi)$ and $(P', \phi')$ on $P \ast_{a,b} P'$ is the abstract open-book $(P \ast_{a,b} P', \phi \circ \phi')$. By \textcite{gabai1983murasugi} (see \cite[Theorem 2.17]{etnyre2006lectures} for a sketch of the proof),~$(P \ast P', \phi \circ \phi')$ is diffeomorphic to the connected sum of $(P, \phi)$ and $(P', \phi')$. If $n \geq 4$, consider two $n$--dimensional relative fibrations, with respective page $P$ and $P'$. We define the Murasugi sum of these relative fibrations on $P \ast_{a,b} P'$ as the relative fibration with page $P \ast_{a,b} P'$. A Murasugi sum of two relative fibrations is diffeomorphic to their connected sum. This can be seen as a straightforward adaptation of Gabai's tridimensional proof, or as the special case $k=n-2$ of \cite[Theorem~3.2]{ghanwat2025murasugi}. An easy application shows that any relative fibration of $\#^{\ell} S^{1} \times S^{n-1}$ can be obtained by performing~$\ell$ successive Murasugi sums of fibrations with annular pages.
\section{Diagrams}
\label{section_diagram}
\subsection{Multisection diagrams}
\label{subsec_diagram}
We briefly recall the notion of multisection diagrams from \cite{aribi2023multisections}. A \emph{cut system} for a closed surface $\Sigma$ is a collection of disjoint, homologically independent simple closed curves on~$\Sigma$, such that performing surgery on $\Sigma$ along these curves results in $S^2$. A cut system \emph{defining} a $3$--dimensional handlebody $H$ is a cut system for $\Sigma \simeq \partial H$, such that each curve bounds a properly embedded disk in $H$. Then $H$ is the result of gluing $2$--handles to a collar of $\Sigma$ along a defining cut system, and capping off the resulting spherical boundary component with a $3$--handle. A \emph{multisection diagram} associated to a closed multisected $(n+1)$--manifold $W= \cup_{i=1}^n W_i$ is a tuple $\lbrace \Sigma = W_{1...n}, (\alpha^i)_{1 \leq i \leq n} \rbrace$, such that for all $i$, $\alpha^i$ is a cut system on $\Sigma$ defining the $3$--dimensional handlebody $\cap_{k \neq i} W_k$. Such diagrams are unique up to diffeomorphism of $\Sigma$, isotopy of curves, and handleslides performed independently within each family $\alpha^i$. Multisection diagrams encompass Heegaard diagrams and trisection diagrams.
\begin{Example}
\label{ex_multi_diagram_sn}
A genus--$0$ multisection diagram for $S^n$ is just a $2$--sphere with an empty collection of curves. It corresponds to the decomposition of Example \ref{ex_sn_g0}. A genus--$1$ multisection diagram for $S^n$ is a torus with a collection $\alpha$ of $k$ parallel curves (with $n > k > 0$) and a collection $\beta$ of~$n-k$ parallel curves, such that every curve in $\alpha$ intersects exactly once each $\beta$ curve. Such a diagram corresponds to one of the multisections of Example \ref{ex_sn_g1}. See Figure \ref{fig_diag_sn_s1_sn}.
\end{Example} 
\begin{Example}
\label{ex_multi_diagram_s1sn}
A torus with a collection of $n$ parallel curves is a multisection diagram for $S^1 \times S^n$. It corresponds to the multisection of Example \ref{ex_s1sn}. See Figure \ref{fig_diag_sn_s1_sn}.
\end{Example}
\begin{figure}[h!]
\[
\begin{tikzpicture}[scale=0.3]
\node (mypic) at (0,0) {\includegraphics[scale=0.5]{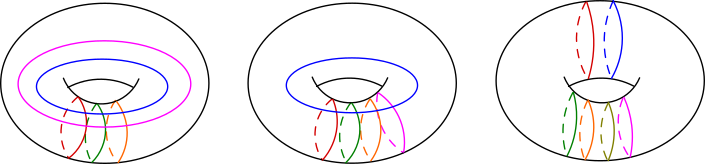}};
\end{tikzpicture}
\]
\caption{Multisection diagrams for $S^6$ (left and center) and $S^1 \times S^6$ (right)}
\label{fig_diag_sn_s1_sn}
\end{figure} 
\begin{Definition}
\label{def_diagram}
An \emph{abstract} $n$--section diagram is a tuple $\lbrace \Sigma, (\alpha^i)_{1 \leq i \leq n} \rbrace$, with $\Sigma$ a connected closed surface and each $\alpha^i$ a cut system for $\Sigma$, such that, for every $I \subset \lbrace 1,...,n \rbrace$ with $\lvert I \rvert =k \geq 2$, $\lbrace \Sigma, (\alpha^i)_{i \in I} \rbrace$ is a $k$--section diagram for a multisection of connected sums of copies of $S^1 \times S^k$. 
\end{Definition}
\begin{Theorem}[\cite{aribi2023multisections}, Theorem 3.2]
\label{thm_diagram}
Let $n \geq 2$.
\begin{itemize}
\item For $n \leq 6$, every abstract $n$--section diagram is the diagram of some smooth multisected $(n+1)$--manifold. 
\item For $n \leq 5$, this manifold is unique up to multisection-preserving diffeomorphism.
\item For every $n$, every abstract $n$--section diagram is the diagram of some PL multisected mani\-fold of dimension $n+1$, and this manifold is unique up to multisection-preserving PL-homeomorphism.  
\end{itemize}
\end{Theorem}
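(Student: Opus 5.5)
This is the existence and uniqueness theorem for multisection diagrams from \cite{aribi2023multisections}. The plan is to build the manifold piece by piece from the diagram, in increasing dimension, always gluing handlebodies along boundaries that are known to be connected sums of $S^1\times S^k$.

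\textbf{Construction.} Start from the closed surface $\Sigma$. For each $i$, build a $3$--dimensional handlebody $H_i=\cap_{k\neq i}W_k$ by attaching $2$--handles to $\Sigma\times[0,1]$ along $\alpha^i$ and capping with a $3$--handle. Then proceed by induction on the dimension $d$ of the pieces. Suppose all pieces $W_J$ with $|J|=n-d+3$ (dimension $d-1$) have been built, together with their gluings.

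For a subset $I$ with $|I|=n-d+2$, the union of the pieces $W_J$ with $J\supset I$ and $|J|=|I|+1$ is a closed $(d-1)$--manifold. It carries a multisection whose diagram is the subdiagram $\lbrace\Sigma,(\alpha^i)_{i\notin I}\rbrace$ (a $(d-2)$--section). By Definition~\ref{def_diagram} and the inductive uniqueness, this union is diffeomorphic to $\#^g S^1\times S^{d-2}$, multisection-preservingly with a standard model. That model is the boundary of a $d$--dimensional handlebody $\natural^g S^1\times B^{d-1}$. Fill the union with this handlebody to get $W_I$. At the last step, $I=\{i\}$ and the $W_i$ are $(n+1)$--dimensional; their union is the closed manifold, and one checks the local models of Definition~\ref{def_multisection}.

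\textbf{Uniqueness.} The filling is unique provided every self-diffeomorphism of $\#^g S^1\times S^{d-2}$ extends over $\natural^g S^1\times B^{d-1}$. By the standard argument (Laudenbach--Poénaru in dimension $4$, and its higher-dimensional analogues), this holds up to the possible ambiguity of exotic spheres and the pseudo-isotopy/mapping class issues. In the smooth category this fails in general, which explains the dimension restrictions.

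\textbf{Main obstacle: the smooth case.} The key step, and the hard one, is to show that the union of lower pieces is the standard $\#^g S^1\times S^{d-2}$ rather than an exotic version, and that gluing maps extend. I would argue as follows. Smooth structures on a PL manifold, or the extension of diffeomorphisms over handlebodies, are controlled by $\pi_0\mathrm{Diff}(S^{d-1})\cong\Theta_d$, the group of exotic $d$--spheres. Existence then requires $\Theta_{d}=0$ for the relevant $d\le n+1$, that is, for dimensions up to $7$ where smoothing obstructions vanish. Uniqueness needs one dimension more, which yields $n\le 6$ and $n\le 5$ respectively.

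\textbf{PL case.} Here the Alexander trick makes every PL homeomorphism of a sphere extend over the ball, and the handlebody extension results hold in all dimensions. This gives existence and uniqueness for every $n$.
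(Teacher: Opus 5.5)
Your architecture is the argument of \cite{aribi2023multisections} summarized in Remark~\ref{rmk_glue_hby}: build the $3$--dimensional pieces from the cut systems, fill in handlebodies dimension by dimension, and identify each $\partial W_I$ with a standard $\#^g S^1\times S^{d-2}$ via Definition~\ref{def_diagram} plus inductive uniqueness. The construction step is fine.

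The gap is in the one ingredient that carries the whole theorem. That ingredient is the fact that every diffeomorphism (resp. PL-homeomorphism) of $\#^g S^1\times S^{m-2}$ extends over the handlebody $\natural^g S^1\times B^{m-1}$. This is known for $4\le m\le 6$ smoothly and for all $m\ge 4$ in PL. You neither prove nor correctly cite it, and the mechanism you offer instead is wrong.

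Extension over a handlebody is not governed by $\pi_0\mathrm{Diff}(S^{m-1})$ or by $\Theta_m$. The mapping class group of $\#^g S^1\times S^{m-2}$ contains slides of one summand over another (realizing automorphisms of the free group $\pi_1$), twists along the essential $(m-2)$--spheres, and more. Showing that \emph{every} class extends is exactly the non-trivial content of the following results:
\begin{itemize}
\item Laudenbach--Po\'enaru \cite{laudenbach1972note} for $m=4$ (smooth);
\item Cavicchioli--Hegenbarth \cite{cavicchioli1993determination} for $m=5,6$ (smooth);
\item Montesinos \cite{montesinos1979heegaard} and \cite{cavicchioli1993determination} in the PL category.
\end{itemize}
The Alexander trick alone does not give the PL statement. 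It, or the fact that diffeomorphisms of $S^2$ extend over $B^3$, only handles the $3$--dimensional step, where a cut-system-preserving map of $\Sigma$ is extended over the $2$--handles and then over the $3$--ball. Your numerical claim is also false: $\Theta_7\cong\mathbb{Z}/28\neq 0$. Exotic $7$--spheres are precisely why smooth uniqueness fails at $n=6$ (Remark~\ref{rmk_milnor}).

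The dimension bounds come from bookkeeping, not from smoothing theory.
\begin{itemize}
\item To build an $(n+1)$--manifold you must recognize each $n$--dimensional $\partial W_i$ as the standard $\#^g S^1\times S^{n-1}$. By Definition~\ref{def_diagram}, this follows from uniqueness for $(n-1)$--section diagrams, which needs the extension theorem for handlebodies of dimension at most $n$.
\item Uniqueness of the $(n+1)$--manifold additionally needs the extension theorem in dimension $n+1$.
\end{itemize}
With the extension theorem known smoothly for $4\le m\le 6$ and in PL for all $m\ge 4$, this yields $n\le 6$ for existence, $n\le 5$ for uniqueness, and all $n$ in PL.

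In particular, your ``main obstacle'' (that the union of lower pieces might be an exotic $\#^g S^1\times S^{d-2}$) is not a separate issue. It is already settled by the inductive uniqueness you invoke in the construction step. Finally, your uniqueness criterion cannot be used verbatim for $d=3$, since not every diffeomorphism of a closed surface extends over a handlebody. There, only maps preserving the cut systems (up to isotopy and handleslides) are extended.
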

\begin{Remark}
\label{rmk_glue_hby}
The collections of curves in an abstract $n$--section diagram specifies how the $3$--dimen\-sional pieces are glued to the central surface. The content of Theorem \ref{thm_diagram} is that this information is actually enough to define a unique multisected manifold (with the restrictions mentioned in the theorem). For let $H$ be an $m$--dimensional handlebody. Then:
\begin{itemize}
\item if $m=3$, every diffeomorphism (resp. PL-homeomorphism) of $\partial H$ that preserves a cut system for $\partial H$ extends to a diffeomorphism (resp. PL-homeomorphism) of $H$; 
\item if $4 \leq m \leq 6$, every diffeomorphism of $\partial H$ extends to a diffeomorphism of $H$;
\item for all $m \geq 4$, every PL-homeomorphism of $\partial H$ extends to a PL-homeomorphism of~$H$.
\end{itemize}
In dimension $3$, the result holds because any diffeomorphism or PL-homeomorphism of the \mbox{$2$--sphere} extends to the $3$--ball. For $m=4$, it was proven by Montesinos \cite{montesinos1979heegaard} in the PL-category and by Laudenbach and Po\'enaru \cite{laudenbach1972note} in the smooth category (see \cite{moussard2024diffeomorphisms, meier2025equivariant} for recent proofs). It was proven by \textcite{cavicchioli1993determination} for $m=5$ and $m=6$ in the smooth category and~$m \geq 5$ in the PL-category. This result implies that there is a unique way to glue an \mbox{$m$--dimensional} handlebody to a manifold whose boundary is a connected sum of copies of~$S^1 \times S^{m-2}$. As long as it stands (indefinitely in the PL-category and until dimension $6$ in the smooth category), we can build inductively a unique manifold from a multisection diagram (see the proof of Theorem 3.2 in \cite{aribi2023multisections} for a detailed and rigorous exposition). 
\end{Remark}
\subsection{Relative multisection diagrams}
\label{subsec_rel_diagrams}
A \emph{defining cut system} for a $3$--dimensional compression body $C$ is a collection of disjoint, homologically independent simple closed curves on $\partial_+ C$, bounding properly embedded disks, such that $C$ is the result of attaching $2$--handles to a collar of $\partial_+ C$ along these curves. Then $\partial_- C$ is diffeomorphic to $\partial_+ C$ surgered along a defining cut system for $C$. A \emph{relative multisection diagram} associated to a compact multisected $(n+1)$--manifold $W= \cup_{i=1}^n W_i$ is a tuple $\lbrace \Sigma = W_{1...n}, (\alpha^i)_{1 \leq i \leq n} \rbrace$, such that for all $i$, $\alpha^i \subset \Sigma$ is a defining cut system for the $3$--dimensional compression body $\cap_{k \neq i} W_k$. 
\begin{Example}
\label{ex_rel_mult_diag_sigma}
Let $\Sigma$ be a connected compact surface with non-empty boundary. A relative multisection diagram for $\Sigma \times B^{n-1}$, associated to the relative multisection of Example \ref{ex_sigma_bn}, is just~$\Sigma$ with an empty collection of curves. 
\end{Example}  
\begin{Example}
\label{ex_diag_s1_sn}
We build a relative multisection diagram for $I \times S^n$, corresponding to the relative multisection of Example \ref{ex_multi_i_times_sn}. Keeping the notations of this example, the central surface is an annulus $\Sigma = I \times B_{1...n}$. For every $J$ with $\lvert J \rvert = n-1$, $B_J$ is a $2$--disk with boundary  $B_{1...n}$. Therefore $W_J = I \times B_J$ is obtained from $\Sigma$ by attaching a single $2$--handle along a curve parallel to $\lbrace 0 \rbrace \times B_{1...n}$. Therefore, a relative multisection diagram associated to this multisection of $I \times S^n$ is an annulus with a collection of $n$ curves parallel to its boundary.   
\end{Example} 
\begin{Example}
\label{ex_diag_xn_minus_ball}
Let $W$ be a multisected $(n+1)$--manifold, with multisection diagram $\lbrace \Sigma, (\alpha^i)_{1 \leq i \leq n} \rbrace$. Then a relative multisection diagram for $W$ minus the interior of a ball, associated to the relative multisection described in Example \ref{ex_xn_minus_ball}, is $\lbrace \overline{\Sigma \setminus D}, (\alpha^i)_{1 \leq i \leq n} \rbrace$, where $D \subset \Sigma$ is a disk disjoint from the curves in $(\alpha^i)_{1 \leq i \leq n}$. 
\end{Example}
\begin{Example}
\label{ex_diag_stab} 
Let $(W, \mathcal{W})$ be a (relatively or not) multisected manifold, with a given associated multisection diagram. Then the connected sum of this diagram with one of the genus--$1$ diagrams of Example \ref{ex_multi_diagram_sn} is a multisection diagram for a stabilization of $(W, \mathcal{W})$ (see \cite{aribi2023multisections}). 
\end{Example} 
\begin{Example}
\label{ex_connected_sum_diagrams}
Let $(W, \mathcal{W})$ and $(W', \mathcal{W'})$ be (relatively or not) multisected manifolds, with given associated multisection diagrams $\lbrace \Sigma, (\alpha^i)_{1 \leq i \leq n} \rbrace$ and $\lbrace \Sigma', (\beta^i)_{1 \leq i \leq n} \rbrace$. Then a diagram correspon\-ding to the connected sum of $(W, \mathcal{W})$ and $(W', \mathcal{W'})$ is $\lbrace \Sigma \# \Sigma ', (\alpha^i,\beta^i)_{1 \leq i \leq n} \rbrace$.
\par As an application, let $C$ be a $P$--based $k$--dimensional compression body, considered as a piece of a relative multisection $\mathcal{M}$. We describe the general structure of a relative multisection diagram associated to the relative multisection of $\partial_+ C$ induced by $\mathcal{M}$ (see Remark~\ref{rmk_decomp_bdy_piece}). Let $\lbrace P_j \rbrace_{j=1}^{\ell}$ be the connected components of $P$. Then a relative multisection diagram for this relative multisection of $\partial_+C$ is given by $ \lbrace \Sigma \# (\#_{j=1}^{\ell} P_j), (\alpha^i,\gamma^i)_{1 \leq i \leq k-2} \rbrace$, where $\lbrace \Sigma, (\alpha^i)_{1 \leq i \leq k-2} \rbrace$ is a diagram associated to a multisection $\mathcal{M}'$ of connected sums of copies of $S^1 \times S^{k-2}$, and each $\gamma^i$ is a collection of $\ell -1$ curves separating $\ell -1$ components of $P$ from $\Sigma \# (\#_{j=1}^{\ell}P_j)$. For instance, if $\mathcal{M}'$ is a connected sum of multisections of Example~\ref{ex_s1sn}, possibly stabilized, we can take for $\lbrace \Sigma, (\alpha^i)_{1 \leq i \leq k-2} \rbrace$ a connected sum of diagrams of~$S^1 \times S^{k-2}$ as in Example \ref{ex_multi_diagram_s1sn}, possibly stabilized. See Figure \ref{fig_gen_rel_diag}. 
\end{Example}
\begin{figure}[h!]
\[
\begin{tikzpicture}[scale=0.3]
\node (mypic) at (0,0) {\includegraphics[scale=0.5]{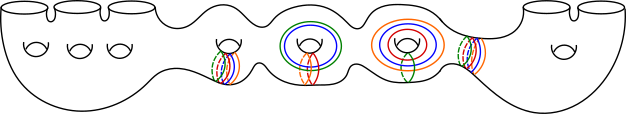}};
\end{tikzpicture}
\]
\caption{Relative $4$--section diagram for $(P \times B^{3})\# (S^1 \times S^4)$, with $P \simeq \Sigma_{3,3} \sqcup \Sigma_{1,2}$}
\label{fig_gen_rel_diag}
\end{figure}
\begin{Definition}
\label{def_rel_diagram}
Let $P$ be a compact surface with no closed component. An \emph{abstract} relative $n$--section diagram with page $P$ is a tuple $\lbrace \Sigma, (\alpha^i)_{1 \leq i \leq n} \rbrace$, with $\Sigma$ a connected compact surface with non-empty boundary, and each $\alpha^i$ a cut system for $\Sigma$, such that, for every~$I \subset \lbrace 1,...,n \rbrace$ with $\lvert I \rvert =k \geq 2$, $\lbrace \Sigma, (\alpha^i)_{i \in I} \rbrace$ is a relative $k$--section diagram associated to a relative $k$--section of the positive boundary of a $(k+2)$--dimensional $P$--based compression body. 
\end{Definition}
\begin{Remark}
\label{rmk_standard_rel_diag}
By definition, every subdiagram $\lbrace \Sigma, (\alpha^i, \alpha^j) \rbrace$ of a relative multisection diagram is a sutured Heegaard diagram for the positive boundary of a $4$--dimensional $P$--based compression body. By \cite[Theorem 3.17]{moussard2024diffeomorphisms}, such a diagram is handleslide diffeomorphic to a specific diagram, described on Figure \ref{fig_double_cmp}. 
\end{Remark}
\begin{figure}[h!]
\[
\begin{tikzpicture}[scale=0.3]
\node (mypic) at (0,0) {\includegraphics[scale=0.3]{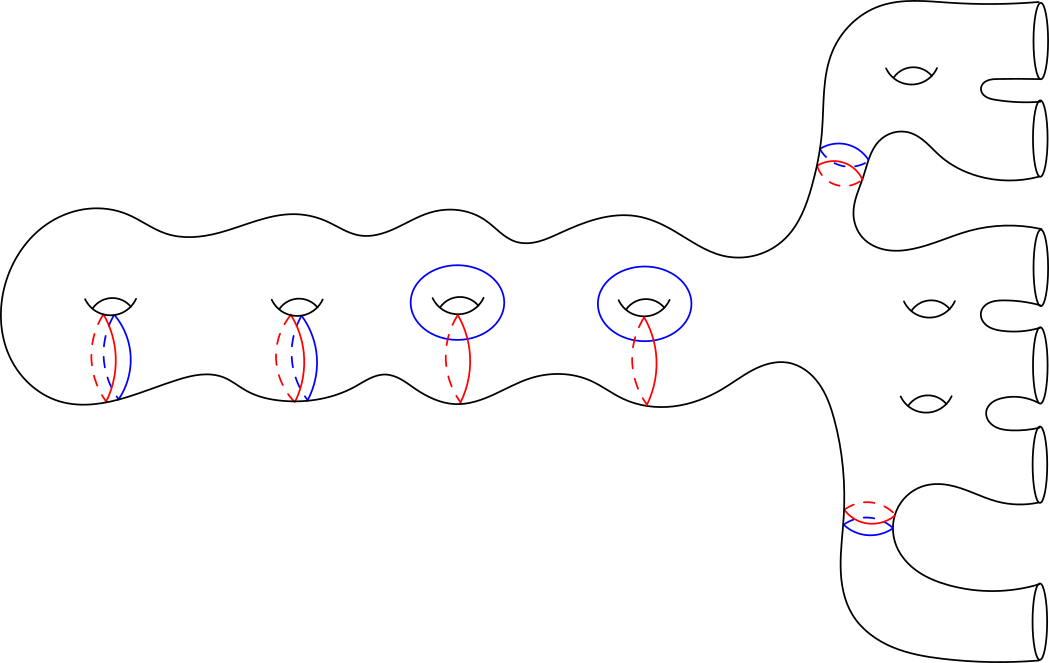}};
\end{tikzpicture}
\]
\caption{Example of a sutured Heegaard diagram for the positive boundary of a $4$--dimensional $P$--based compression body}
\label{fig_double_cmp}
\end{figure}
\begin{Remark}
\label{rmk_milnor}
As shown in \cite[Remark 3.3]{aribi2023multisections}, the existence of exotic spheres in dimension~$7$ implies that an abstract multisection diagram cannot determine a unique multisected smooth \mbox{$7$--manifold} up to diffeomorphism, because an exotic $7$--sphere $\Sigma^7$ and the $7$--sphere $S^7$ share the same multisection diagram ($S^2$ with an empty set of curves). Therefore any smooth (possibly relatively) multisected $7$--manifold $W$ shares a common diagram with $W \# \Sigma^7$.
\end{Remark}
We want to prove a relative version of Theorem \ref{thm_diagram}. To that end, we need to adapt to $P$--based compression bodies the statement that every diffeomorphism of the boundary of a $k$--dimensional handlebody extends to the whole handlebody if $4 \leq k \leq 6$, and that every PL-homeomorphism of the boundary of a $k$--dimensional handlebody extends to the whole handlebody if $k \geq 4$ (see Subsection \ref{subsec_diagram}). 
\begin{Definition}
\label{def_compat_pos_bdy}
Let $C$ be a $P$--based $k$--dimensional compression body. Identify $\partial_+C$ with the connected sum of the components of $B^{k-3} \times P$ with a number of copies of $S^1 \times S^{k-2}$. We denote by $\mathrm{Diff}(P,\partial)$ (resp. $\mathrm{PLHom}(P,\partial)$) the group of diffeomorphisms (resp. PL-homeomorphisms) of~$P$ which restrict to the identity on~$\partial P$. We say that a diffeomorphism (resp. PL-homeomorphism)~$f$ of~$\partial_+C$ is \emph{compatible with the $P$--based structure} on $C$ if, for every $(x,p) \in (S^{k-4} \times P) \cup (B^{k-3} \times \partial P) \simeq \partial (\partial_+C)$, $f(x,p) = (x,f_x(p))$, with $f_x \in \mathrm{Diff}(P,\partial)$ (resp. $f_x \in \mathrm{PLHom}(P,\partial)$).  
\end{Definition}
\begin{Proposition}
\label{prop_exten_bdy}
Let $C$ be a $P$--based $k$--dimensional compression body, with $k \geq 4$. Then every diffeomorphism (resp. PL-homeomorphism) of $\partial_+ C$, compatible with the $P$--based structure on $C$, extends to a diffeomorphism (resp. PL-homeomorphism) of $\partial C$.  
\end{Proposition}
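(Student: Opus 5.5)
The strategy is to extend $f$ across $\partial_- C$ directly. Since $\partial C=\partial_+C\cup\partial_-C$ with $\partial_+C\cap\partial_-C=\partial(\partial_+C)$, it suffices to build a diffeomorphism (resp. PL-homeomorphism) $F$ of $\partial_-C$ that agrees with $f$ on the common boundary, and then glue $F\cup f$.

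Following Remark~\ref{rmk_dec_neg_bdy_cmp}, identify $\partial_-C$ with the smoothing of $B^{k-3}\times P$. Its boundary $(S^{k-4}\times P)\cup(B^{k-3}\times\partial P)$ is exactly $\partial(\partial_+C)$. By compatibility (Definition~\ref{def_compat_pos_bdy}), on this boundary $f(x,p)=(x,f_x(p))$ with $f_x\in\mathrm{Diff}(P,\partial)$ for $x\in S^{k-4}$, and $f$ is the identity on $B^{k-3}\times\partial P$. So the task reduces to extending the map $x\mapsto f_x$ from $S^{k-4}$ to a map $B^{k-3}\to\mathrm{Diff}(P,\partial)$, smooth in $x$. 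We then set $F(x,p)=(x,f_x(p))$. This $F$ restricts to the identity on $B^{k-3}\times\partial P$, hence matches $f$ there, and it respects the corner structure, so it survives the smoothing of Remark~\ref{rmk_smooth_product}. If $P$ is disconnected, we argue component by component.

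\textbf{Case $k\geq5$.} Here $S^{k-4}$ is connected, so its image under $x\mapsto f_x$ lies in a single component of $\mathrm{Diff}(P,\partial)$. By Theorem~\ref{Earle} that component is contractible, so the map extends over $B^{k-3}$. Some care is needed to make the extension smooth in $x$, for instance by using a smooth contraction and a collar of $S^{k-4}$. In the PL case the same argument applies with $\mathrm{PLHom}(P,\partial)$, using the PL version of Theorem~\ref{Earle} mentioned after its statement.

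\textbf{Case $k=4$.} This is the main obstacle. Here $S^0=\{\pm1\}$, and we need $f_{-1}$ and $f_{+1}$ to lie in the same component of $\mathrm{Diff}(P,\partial)$, i.e. to be isotopic rel $\partial P$. This does not follow from compatibility alone; it must come from the fact that $f$ is a global diffeomorphism of the $3$-manifold $\partial_+C\cong (P\times I)\#(\#^m S^1\times S^2)$. I would argue via $\pi_1$. Take a basepoint $q\in\partial P$, so that $\{-1\}\times\{q\}$ and $\{1\}\times\{q\}$ are joined by the fixed arc $[-1,1]\times\{q\}$ lying in $B^{1}\times\partial P$. The two inclusions $P\times\{\pm1\}\hookrightarrow\partial_+C$ are homotopic through $P\times I$, so they induce the same injection $\pi_1(P,q)\to\pi_1(P)\ast F_m$. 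Since $f$ fixes the connecting arc, we get $\iota\circ(f_{-1})_*=\iota\circ(f_{+1})_*$, hence $(f_{-1})_*=(f_{+1})_*$ on $\pi_1(P,q)$.

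From equality on $\pi_1$ I would deduce that $f_{-1}^{-1}f_{+1}$ is homotopic to the identity rel one boundary point, and then upgrade to isotopy rel $\partial P$. The delicate points are the twists along boundary-parallel curves and the arcs between different boundary components. To handle them, I would repeat the $\pi_1$ argument with basepoints on each boundary component, and also compare the images of arcs joining boundary components; the arcs $[-1,1]\times\partial P$ are fixed by $f$, which pins down these twists. Finally, the Baer–Epstein theorem (homotopy rel boundary implies isotopy rel boundary for surfaces) gives the required path from $f_{-1}$ to $f_{+1}$, and the construction of the first paragraph applies.
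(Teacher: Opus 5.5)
Your proposal is correct and follows the paper's proof. The $k\geq 5$ case is the same Earle--Schatz contraction argument. For $k=4$, the arc comparison you end up with is exactly the paper's argument with the curves $\gamma_a$ and a system of arcs cutting $P$ into disks: close $f_{-1}(a)\cup f_{+1}(a)$ up with the fixed arcs $[-1,1]\times\partial a$, use $\pi_1$--injectivity of $P\times I$ in $\partial_+C$, then apply Baer--Epstein. The preliminary single-basepoint $\pi_1$ step can be dropped, since the arc argument applied to such a cutting system already yields an isotopy rel $\partial P$.
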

\begin{proof}
Denote by $f$ such a diffeomorphism. 
\par First assume that $k=4$. We restate the proof of \cite[Lemma 13]{castro2018diagrams}. Identify $\partial_+C$ with the connected sum of $[-1,1] \times P$ with a number of copies of $S^1 \times S^{2}$. Consider a properly embedded arc $a \in  P$. Then $\gamma_a = (\lbrace -1 \rbrace \times a) \cup ([-1, 1] \times \partial a) \cup (\lbrace 1 \rbrace  \times a)$ is a closed curve in $\partial (\partial_+C)$, which bounds the disk $[-1,1] \times a$ in $\partial_+C$. Denote by $f_{\pm}$ the restriction of $f$ to $\lbrace \pm 1 \rbrace \times P$. Then $f(\gamma_a)= (\lbrace -1 \rbrace \times f_-(a)) \cup ([-1, 1] \times \partial a) \cup (\lbrace 1 \rbrace  \times f_+(a))$. As $f(\gamma_a)$ bounds a disk, it is homotopically trivial in $[-1,1] \times P$. Therefore $f_-(a)$ and $f_+(a)$ are homotopic rel. endpoints in $P$, and thus by a result of Baer (\cite{baer1928isotopie}, see \cite[Theorem~3.1]{epstein1966curves}) $f_-(a)$ and $f_+(a)$ are isotopic. Applying this result to a collection of arcs cutting $P$ into disks, we obtain that $f_-$ and $f_+$ are isotopic relatively to $\partial P$, through an isotopy $F: [-1,1] \times \mathrm{Diff}(P,\partial) \rightarrow \mathrm{Diff}(P,\partial)$. Therefore we can extend $f$ to~$\partial_- C$, parametrized by $[-1,1] \times P$, by setting $f(t,p)= F(t,f_-(p))$.    
\par Now assume that $k \geq 5$. By definition, for every $(x,p) \in S^{k-4} \times P$, we have $f(x,p) = (x, f_x(p))$, with $f_x \in \mathrm{Diff}(P,\partial)$. Therefore $x \rightarrow f_x$ is a smooth map from $S^{k-4}$ to $\mathrm{Diff}(P,\partial)$. As the components of $\mathrm{Diff}(P,\partial)$ are contractible (Earle and Schatz, see Theorem \ref{Earle}), and because~$k \geq 5$, this map is isotopic to a constant map $x \rightarrow g$, with $g$ fixed in $\mathrm{Diff}(P,\partial)$. Using this isotopy, we extend $f$ to a collar of $\partial(\partial_+C)$ in $\partial_-C$. Denote by $\tilde{f}$ this extension. We have $\tilde{f} = f$ on $\partial(\partial_+C)$, and $\tilde{f}(x,p)=(x,g(p))$ on the other side of the collar. Therefore, we can further extend $\tilde{f}$ to $\partial_- C$ by $(x,p) \rightarrow (x, g(p))$. The proof readily adapts to the PL category.   
\end{proof}
\begin{Proposition}
\label{prop_extension}
Let $C$ be a $P$--based $k$--dimensional compression body. If $4 \leq k \leq 6$, then every diffeomorphism of $\partial_+ C$, compatible with the $P$--based structure on $C$, extends to a diffeomorphism of $C$. If $k \geq 4$, every PL-homeomorphism of $\partial_+ C$, compatible with the $P$--based structure on $C$, extends to a PL-homeomorphism of $C$.    
\end{Proposition}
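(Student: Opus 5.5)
The plan is to combine Proposition \ref{prop_exten_bdy} with the extension results for handlebodies recalled in Remark \ref{rmk_glue_hby}. Let $f$ be a diffeomorphism (resp. PL-homeomorphism) of $\partial_+C$ compatible with the $P$--based structure. By Proposition \ref{prop_exten_bdy}, $f$ extends to a diffeomorphism (resp. PL-homeomorphism) $\hat f$ of the whole boundary $\partial C = \partial_+ C \cup \partial_- C$. Here we need that the extension produced there is smooth across the corner $\partial(\partial_+C)=\partial(\partial_-C)$. I would arrange this by taking the isotopies of Proposition \ref{prop_exten_bdy} to be constant near the ends of the collar parameter, so that $\hat f$ is of product form near the corner and the pieces glue smoothly.

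Next, recall from Remark \ref{rmk_dec_neg_bdy_cmp} that, since $P$ has no closed component, the smoothing of a $P$--based $k$--dimensional compression body $C$ is a $k$--dimensional $1$--handlebody. Indeed, $s(P\times B^{k-3})\times[0,1]$ is a product of a surface with boundary and a ball, hence a $k$--dimensional handlebody, and attaching $1$--handles keeps it one. Its boundary $\partial C$, with corners smoothed, is therefore a connected sum of copies of $S^1\times S^{k-2}$ (or $S^{k-1}$). Transporting $\hat f$ through the smoothing isotopy of Proposition \ref{prop_smoothing} gives a diffeomorphism of the boundary of a smooth handlebody. For $4\le k\le 6$ in the smooth category, and for every $k\ge 4$ in the PL category, Remark \ref{rmk_glue_hby} (Laudenbach--Po\'enaru and Montesinos for $k=4$, Cavicchioli--Hegenbarth for $k=5,6$ and for PL with $k\ge5$) says that such a boundary diffeomorphism extends over the handlebody. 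Conjugating back by the smoothing gives an extension $F$ of $\hat f$, hence of $f$, to $C$.

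The main obstacle is the compatibility with corners. The smoothing $s(C)$ and the identification of $\partial C$ with a smooth closed manifold have to be chosen so that $\hat f$ corresponds to a genuine diffeomorphism of $\partial s(C)$, and the extended map has to respect the corner stratum $\partial(\partial_\pm C)$ when we pass back to $C$. I would handle this by first isotoping $\hat f$ to be a product $(x,t)\mapsto(\hat f|_{\partial(\partial_+ C)}(x),t)$ on a collar neighbourhood of the corner, so that it commutes with the local corner-rounding model of Figure \ref{fig_smooth}. Then the extension given by the cited theorems automatically restricts correctly, and any smoothing of the corner region is undone by a map supported near it. In the PL category no corner issue arises (Remark \ref{rmk_PL}), so that case follows directly.
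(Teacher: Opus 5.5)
Your proposal is correct and follows the paper's proof: extend $f$ to $\partial C$ via Proposition~\ref{prop_exten_bdy}, observe that $C$ is a $k$--dimensional handlebody, and apply the Laudenbach--Po\'enaru, Montesinos and Cavicchioli--Hegenbarth extension theorems recalled in Remark~\ref{rmk_glue_hby}. Your extra step of putting the maps in product form on a collar, so that the extension is smooth across $\partial(\partial_+ C)$, is a detail the paper leaves implicit, and you handle it adequately.
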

\begin{proof}
As we already mentioned, a diffeomorphism of the boundary of a smooth handlebody extends to a diffeomorphism of the whole handlebody in dimension $4$ \parencite{laudenbach1972note}, $5$ and $6$ \parencite{cavicchioli1993determination}. A PL-homeomorphism of the boundary of a PL handlebody extends to a PL-homeomorphism of the whole handlebody in any dimension $k \geq 4$ \parencite{montesinos1979heegaard,cavicchioli1993determination}. Noting that $C$ is a handlebody, Proposition~\ref{prop_extension} is a direct consequence of Proposition \ref{prop_exten_bdy}, combined with these results.
\end{proof}
\begin{Theorem}
\label{thm_rel_diagram}
Let $n \geq 2$.
\begin{itemize}
\item For $n \leq 6$, every abstract relative $n$--section diagram is the diagram of some smooth relatively multisected $(n+1)$--manifold. 
\item For $n \leq 5$, this manifold is unique up to multisection-preserving diffeomorphism.
\item For every $n \geq 2$, every abstract relative $n$--section diagram is the diagram of some PL relatively multisected $(n+1)$--manifold, and this manifold is unique up to multisection-preserving PL-homeomorphism.  
\end{itemize}
\end{Theorem}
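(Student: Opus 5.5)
We will mimic the proof of Theorem \ref{thm_diagram} in \cite{aribi2023multisections}: build the manifold inductively, piece by piece, in increasing dimension, starting from the central surface. The new ingredient is that the pieces are $P$--based compression bodies rather than handlebodies. Their gluings must therefore respect the $P$--based structure, and uniqueness will come from Proposition \ref{prop_extension} instead of the handlebody extension results of Remark \ref{rmk_glue_hby}.

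\emph{Construction.} Start with $\Sigma$. For each $i$, attach $3$--dimensional $2$--handles to $\Sigma \times [0,1]$ along $\alpha^i \times \{1\}$. This produces a $3$--dimensional compression body $W_{\{1,\dots,n\}\setminus\{i\}}$ with $\partial_+ = \Sigma$. Its negative boundary is $\Sigma$ surgered along $\alpha^i$, which by the abstract diagram condition is a copy of $P$ whose boundary is $\partial\Sigma$. This gives the $3$--skeleton.

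Inductively, suppose that for every $J$ with $\lvert J\rvert = n-m+2$ (so the pieces have dimension $m+1$) the union of the already built pieces $W_{J'}$, $J'\supsetneq J$, is a relatively multisected $m$--manifold with page $P$. Its diagram is the subdiagram $\{\Sigma,(\alpha^i)_{i\notin J}\}$. By Definition \ref{def_rel_diagram}, this subdiagram is a diagram of the positive boundary of an $(m+1)$--dimensional $P$--based compression body $C_J$, with its induced relative multisection (Remark \ref{rmk_dec_pos_bdy}). Assuming uniqueness holds in dimension $m$, there is a multisection-preserving identification of our union with $\partial_+ C_J$. We glue $C_J$ along it and set $W_J = C_J$.

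We must check two things. First, the negative boundaries $\partial_- C_J$ fit together. Second, the corners match the local model of Proposition \ref{local_model}, and the result is a relative multisection. The negative boundaries glue as products $P \times B$ meeting along $\partial P$, exactly as in the proof of Proposition \ref{prop_induced}. At the last step ($m+1=n+1$), each $W_i$ is glued along an $n$--manifold. The dimension bookkeeping matches the closed case: the largest pieces have dimension $n+1$, and extension is needed for pieces of dimension up to $n$ for uniqueness of the intermediate stages.

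\emph{Uniqueness.} Given two relatively multisected manifolds with the same diagram, we build the diffeomorphism inductively. On $\Sigma$ it is the identity, up to handleslides and isotopies. It extends over the $3$--dimensional compression bodies, since a diffeomorphism preserving a defining cut system extends across the $2$--handles; on the remaining boundary sphere/page parts we use that it is the identity on $\partial P$. At each higher stage, we already have a diffeomorphism $\partial_+ W_J \to \partial_+ W'_J$. Provided it is compatible with the $P$--based structure (Definition \ref{def_compat_pos_bdy}), Proposition \ref{prop_extension} extends it over $W_J$ when $4\le \dim W_J\le 6$ in the smooth case, and in all dimensions in the PL case. This gives the ranges $n\le5$ for uniqueness and, for existence, the requirement that the positive boundaries be determined up to dimension $6$, which holds for $n\le 6$. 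Existence for $n\le 6$ needs uniqueness only of the lower-dimensional intermediate pieces, which is why its range is one larger than the uniqueness range. The PL statement follows verbatim.

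\emph{Main obstacle.} The delicate step is compatibility: the inductively built diffeomorphism on $\partial_+W_J$ must restrict on $\partial(\partial_+W_J)$ to a map of the form $(x,p)\mapsto(x,f_x(p))$ with $f_x\in\mathrm{Diff}(P,\partial)$. I would arrange this at every stage by keeping the maps fiber-preserving on the negative boundaries, which are products $P\times B^{k-3}$. The map on each $\partial_-$ is built by the extension in Proposition \ref{prop_exten_bdy}, which is itself fiberwise. I would then check that fiberwise maps on the lower pieces glue to fiberwise maps on $\partial(\partial_+ W_J)$, using the product structure exhibited in the proof of Proposition \ref{prop_induced}.
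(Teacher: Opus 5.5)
Your proposal is correct and follows essentially the paper's route. You build the pieces inductively from $\Sigma$, using the subdiagram condition of Definition \ref{def_rel_diagram} together with lower-dimensional uniqueness to identify each union of lower pieces with $\partial_+$ of a $P$--based compression body and glue it in; you then get uniqueness by extending a cut-system-preserving map of $\Sigma$ piece by piece via Proposition \ref{prop_extension}, which yields the ranges $n\le 6$ and $n\le 5$. (The paper only asserts the $P$--based compatibility ``by construction''; you address it more explicitly.) One small indexing slip: with $\lvert J\rvert = n-m+2$ the piece $W_J$ has dimension $m$, so you want $\lvert J\rvert = n-m+1$ for $W_J$ to be $(m+1)$--dimensional.
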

\begin{proof}
We construct recursively a manifold corresponding to an abstract relative $n$--section diagram $\lbrace \Sigma, (\alpha^1,...,\alpha^n)\rbrace$, with page $P$. 
\par We take the product $\Sigma \times B$, with $B$ the closed $(n-1)$--ball. We equip $B$ with the decomposition $B=\cup_{i=1}^n B_i$ induced by the standard $n$--decomposition of $\mathbb{R}^{n-1}$. We denote by $p_i$ the intersection of the ray $\cap_{j \neq i} B_i$ with the boundary of $B$, and by $V_i$ a tubular neighborhood of $p_i$ in $\partial B$. For every $i$ and $j$, the disk $B_{\lbrace 1,...,n \rbrace \setminus \lbrace i ,j \rbrace}$ intersects $\partial B$ along an arc $[p_i,p_j]$, which connects $p_i$ to $p_j$. This arc intersects~$\partial V_i$ (resp.~$\partial V_j$) in a point $p_{ij}$ (resp. $p_{ji}$). See Figure \ref{fig_alg_diag_1}. Note that each $V_i$ is equipped with the decomposition induced by the decomposition of $B$, and that this decomposition is induced by the standard $(n-1)$--decomposition of $\mathbb{R}^{n-2}$. This allows us to consider that we can smoothen corners as in Remark \ref{rmk_dec_pos_bdy}, when needed. 
\par Let $C$ be a tridimensional compression body, with $\partial_+ C \simeq \Sigma$ and $\partial_-C \simeq P$. For every $i$, choose a diffeomorphism $f_i: \partial_+C \rightarrow \Sigma$, such that $f_{i}^{-1}(\alpha^i)$ is a cut system defining $C$. Then glue $C \times V_i$ along $\Sigma \times V_i$ according to $f_i \times \mathrm{Id}$. For each $i \neq j$, we have a compact $3$--manifold equipped with a sutured Heegaard splitting: $M_{ij}= (C_i \times \lbrace p_{ij} \rbrace) \cup (\Sigma \times [p_{ij},p_{ji}]) \cup (C_j \times \lbrace p_{ji} \rbrace)$. By construction, the associated sutured Heegaard diagram is $\lbrace \Sigma, (\alpha^i,\alpha^j) \rbrace$. Because a sutured Heegaard diagram defines a unique smooth manifold up to diffeomorphism, $M_{ij}$ is diffeomorphic to the positive boundary of a $P$--based $4$--dimensional compression body~$X$. Therefore, we can glue a copy of $X$ along $M_{ij}$ (and perform a similar construction for every couple $(i,j)$). We continue this process as long as the uniqueness result holds. We obtain, at each step, relatively multisected manifolds, and the constraint on every subcollection of curves ensures that each of these manifolds admits as relative multisection diagram a diagram for the positive boundary of a $P$--based compression body~$Z$. By uniqueness, we conclude that such a manifold is in fact diffeomorphic to $\partial_+Z$. Therefore we can glue $Z$ along this manifold and move one step further. Note that this argument applies also to the PL category. 
\par Now we prove the uniqueness part. Let $W$ and $W'$ be two relatively multisected PL or smooth manifolds, sharing the associated diagram $\lbrace \Sigma, (\alpha^1,...,\alpha^n)\rbrace$. Therefore we have a diffeomorphism, or PL-homeomorphism $h$ between the central surfaces of $W$ and $W'$, which preserves every cut system~$\alpha^i$. We can thus extend $h$ to the $3$--spine of $W$ and $W'$, which is the boundary of the \mbox{$4$--dimensional} pieces. Then, using Proposition \ref{prop_extension}, we can extend $h$ step by step until the $6$--spine in the smooth category, and without restriction in the PL-category (by construction, the extension of $h$ constructed at each step is compatible with the $P$--based structure on every piece).                        
\end{proof}
\begin{figure}[h!]
\[
\begin{tikzpicture}[scale=0.3]
\node (mypic) at (0,0) {\includegraphics[scale=0.5]{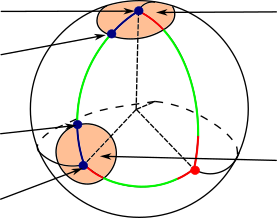}};
\node (mypic) at (-6.6,-4) {$p_i$};
\node (mypic) at (-6.6,4.4) {$p_j$};
\node (mypic) at (-7,-0.9) {$p_{ij}$};
\node (mypic) at (-7,2.4) {$p_{ji}$};
\node (mypic) at (6.6,4.1) {$V_{j}$};
\node (mypic) at (6.6,-2.4) {$V_{i}$};
\end{tikzpicture}
\]
\caption{Arcs $[p_i,p_{ij}]$, $[p_{ij},p_{ji}]$ and $[p_j,p_{ji}]$ on $B^3$}
\label{fig_alg_diag_1}
\end{figure} 
\section{Combining relative multisections}
\label{section_gluing}
\subsection{Gluing relative multisections}
Let $W$ and $W'$ be two smooth manifolds with boundary, such that there exists a diffeomorphism $f: \partial W \rightarrow \partial W'$. Extend $f$ to a diffeomorphism from a collar of $\partial W$ to a collar of $\partial W'$, and glue $W$ to $W'$ along these collars with respect to this extension of $f$. We denote by $W \cup_f W'$ the resulting smooth manifold. 
\begin{Theorem}
\label{thm:gluing}
Let $(W,\mathcal{W}=\cup_{i=1}^n W_i)$ and $(W',\mathcal{W}'=\cup_{i=1}^n W_i')$ be two relatively \mbox{$n$--sected} smooth $(n+1)$--manifolds. Suppose that there exists an orientation reversing diffeomorphism $f: \partial W \rightarrow \partial W'$, sending $\partial W_i$ to $\partial W_{i}'$ for every $i$, which preserves the fibers of the induced relative fibrations on the boundaries, i.e. such that the following diagram is commutative.

\[
  \begin{tikzcd}
    \partial W \setminus \mathcal{B} \arrow{r}{f} \arrow[swap]{dr}{\pi} & \partial W' \setminus f( \mathcal{B} )\arrow{d}{\pi '} \\
     & S^{n-2}
  \end{tikzcd}
\]
Then $\mathcal{W} \cup_f \mathcal{W}' = \cup_{i=1}^n (W_i \cup_f W_i ')$ is an $n$--section of $W \cup_f W'$. 
\end{Theorem}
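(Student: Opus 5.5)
We need to check the three conditions of Definition \ref{def_multisection} for $V_I = W_I \cup_f W_I'$ in $V = W \cup_f W'$: the local corner structure and strata, the central surface being closed, and each $V_I$ with $\lvert I \rvert \leq n-1$ smoothing to a handlebody. The plan is to do these in order, the last being the substantive one.

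\textbf{Corners and strata.} Since $f$ sends $\partial W_i$ to $\partial W_i'$ and commutes with the fibrations, it sends $W_I \cap \partial W$ onto $W_I' \cap \partial W'$ for every $I$, and in particular $\mathcal{B} = \Sigma \cap \partial W$ to $\mathcal{B}' = \Sigma' \cap \partial W'$. We first extend $f$ to collars. Using the second local model of Proposition \ref{local_model}, each point of $\partial W$ has a neighborhood $(\mathbb{R}^{n-k+1} \times [0,\infty)) \times \mathbb{R}^{k-1}$ in which the $W_i$ are products of the half-space with the sectors $A_i$. We choose the collar so that it is compatible with the pieces, i.e. $W_I \cap (\partial W \times [0,1)) = (W_I \cap \partial W) \times [0,1)$; this can be done with a vector field tangent to every relative stratum, built by a partition of unity as in the proof of Proposition \ref{prop_smoothing}. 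We then glue the collars by $f \times \mathrm{id}$. Near the gluing locus, $V_I$ is then locally $(W_I \cap \partial W) \times (-1,1)$, so the local models match those of Definition \ref{def_multisection} and the stratum condition holds. Gluing $\Sigma$ to $\Sigma'$ along their boundaries via $f|_{\mathcal{B}}$ gives a connected closed surface $V_{1\dots n}$.

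\textbf{Handlebody condition.} Fix $I$ with $\lvert I\rvert \leq n-1$ and set $m = n-\lvert I\rvert+2$. By Definition \ref{def_rel_multi}, $s(W_I) \cong C_I$ and $s(W_I') \cong C_I'$ are $P$--based $m$--dimensional compression bodies, and $f$ identifies $\partial_- W_I$ with $\partial_- W_I'$. So $V_I$ is the union of two compression bodies along their negative boundaries, each of which is a smoothed $P \times B^{m-3}$. Each compression body is $\partial_- \times [0,1]$ with $1$--handles attached on the far side. Hence $V_I$ is a collar $s(P \times B^{m-3}) \times [-1,1]$ with $1$--handles attached at both ends. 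Since $P$ has no closed component, $P \times B^{m-3}$ is itself an $m$--dimensional handlebody, possibly disconnected. The $1$--handles on one side can be isotoped across the product to the other side, so $V_I$ is a handlebody with $1$--handles attached, hence a handlebody. We should check that $V_I$ is connected: it contains the connected compression body $C_I$, and every component of $C_I'$ meets $\partial_- C_I'$, which is glued to $\partial_- C_I \subset C_I$.

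\textbf{Expected obstacle.} The main difficulty is the smoothing bookkeeping: showing that $s(V_I)$ is diffeomorphic to $s(W_I) \cup s(W_I')$ glued along the negative boundaries, including near the binding. There the negative boundaries are not smooth, since the copies of $P$ meet along $\mathcal{B}$, as described in Remark \ref{rmk_dec_neg_bdy_cmp}. The first approach to try is to use the relative smoothing $s_r$ on both sides. Compatibility with the fibrations gives a fibered identification of $\partial_- W_I \setminus \mathcal{B}$ with $\mathrm{Int}(P) \times (\text{ball})$, and near $\mathcal{B}$ both sides look like $\mathcal{B} \times (\text{cone on the sectors})$, matched by $f$. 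This lets us smooth the glued object as a single product $P \times B^{m-3} \times [-1,1]$, where Remark \ref{rmk_smooth_product} handles the corners. In the PL category this issue disappears, and the handle-counting argument above suffices.
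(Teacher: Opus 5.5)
Your proposal is correct and follows essentially the paper's argument: the central surfaces close up along the binding, the strata of $W_I \cup_f W_I'$ are inherited from the relative strata of $W_I$ and $W_I'$, and each glued piece is $(P \times B^{n-\lvert I\rvert-1}) \times [-1,1]$ with $1$--handles attached at both ends, hence a handlebody. Your piece-compatible collars, connectivity check and smoothing discussion near the binding fill in details the paper leaves implicit; one small slip is that $P \times B^{m-3} \times [-1,1]$, not $P \times B^{m-3}$, is the $m$--dimensional handlebody.
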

\begin{proof}
It is immediate that $W_{1...n} \cup_f W_{1...n}'$ is a smooth closed surface. As each $W_I \cup_f W_I'$ is obtained from $(P \times B^{n- \lvert I \rvert -1}) \times [-1,1]$ by gluing $1$--handles along $(P \times B^{n- \lvert I \rvert -1}) \times \lbrace -1 \rbrace$ and $P \times (B^{n- \lvert I \rvert -1}) \times \lbrace 1 \rbrace$, it is an $(n- \lvert I \rvert +2)$--dimensional handlebody with corners. Moreover, the stratification of $\partial (W_I \cup_f W_I')$ by the interiors of the $W_J \cup_f W_{J}'$'s, for $J \supset I$, is inherited from the relative stratification of $W_I$ and $W_I'$ and is precisely the one required by Definition \ref{def_multisection}.   
\end{proof} 
Theorem \ref{thm:gluing} adapts immediately to the PL category.  
\begin{Remark}
Theorem \ref{thm:gluing} also adapts to the case where two relatively multisected manifolds are glued along only some of their boundary components. This results in a relative multisection of the compact manifold obtained by gluing.   
\end{Remark}
For the rest of this section, we keep the notations of Theorem \ref{thm:gluing}.
\begin{Remark}
\label{rmk_bdy_cpnts}
We can compute the genera of the central surface and pieces of $\mathcal{W} \cup_f \mathcal{W}'$, using the parameters of $\mathcal{W}$ and $\mathcal{W}'$. If the central surfaces of $\mathcal{W}$ and $\mathcal{W}'$ have genus $g_1$ and~$g_2$, and $b$ boundary components, we obtain a central surface of genus $g_1 + g_2 + b -1$ for $\mathcal{W} \cup_f \mathcal{W}'$. Further, if the common page of $\mathcal{W}$ and $\mathcal{W}'$ is connected and has genus~$p$, the result of gluing together two pieces of genus $k_1$ and $k_2$ is a handlebody of genus $k_1 + k_2 - (2p + b - 1)$. 
\end{Remark}
Now we describe how to glue relative multisection diagrams. A \emph{cut system of arcs} for a connected compact surface $P$ with non-empty boundary is a collection of disjoint, properly embedded arcs on $P$, such that cutting $P$ along these arcs results in a disk. A cut system of arcs for a disconnected surface with no closed component is a union of cut systems of arcs, one for each component. 
\par Let $\lbrace \Sigma, (\alpha^i)_{1 \leq i \leq n} \rbrace$ and $\lbrace \Sigma', (\beta^i)_{1 \leq i \leq n} \rbrace$ be two diagrams associated to $\mathcal{W}$ and $\mathcal{W}'$. We denote by $C_i$ (resp. $C_{i}'$) the compression body $\cap_{j \neq i} W_i$ (resp. $\cap_{j \neq i} W_{i}'$), and by $\Sigma_i$ (resp.~$\Sigma_{i}'$) the surface resulting of surgery on $\Sigma$ (resp. $\Sigma'$) along the $\alpha^i$ (resp. $\beta^i$) curves. There is an identification between $\Sigma_i$ and $\partial_- C_i$ (resp. between $\Sigma_{i}' $ and $\partial_- C_{i}'$). Therefore $f$ induces a diffeomorphism $f_i$ between $\Sigma_i$ and $\Sigma_{i}'$. This sets the stage for the following proposition.
\begin{Proposition}
\label{prop_glue_diag}
We keep the notations above. For every $i$, we denote by $a^i$ a cut system of arcs for $\Sigma_i$, and by $\beta^{i}_{ns}$ the set of non-separating curves in $\beta^i$. Then $\lbrace \Sigma \cup_f \Sigma', (\alpha^i, \beta^{i}_{ns}, a^i \cup f_i(a^i))_{1 \leq i \leq n} \rbrace$ is a multisection diagram associated to $\mathcal{W} \cup_f \mathcal{W}'$. 
\end{Proposition}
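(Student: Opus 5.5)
By Theorem \ref{thm:gluing}, $\mathcal{W} \cup_f \mathcal{W}'$ is an $n$--section of $W \cup_f W'$ with central surface $\Sigma \cup_f \Sigma'$, which is closed. So the statement reduces to a claim about each $i$ separately. I will show that the curves $\alpha^i \cup \beta^i_{ns} \cup (a^i \cup f_i(a^i))$ form a cut system on $\Sigma \cup_f \Sigma'$ that bounds disks in the $3$--dimensional handlebody $H_i = C_i \cup_f C_i'$. Here $C_i \cup_f C_i'$ is glued along $\partial_- C_i \simeq \Sigma_i$ and $\partial_- C_i' \simeq \Sigma_i'$ via $f_i$. Uniqueness of multisection diagrams up to handleslides within each family is not needed: any defining cut system will do.

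\textbf{Disks.} Each $\alpha^i$ curve bounds a disk in $C_i$, and each $\beta^i$ curve bounds a disk in $C_i'$; both disks remain in $H_i$. For an arc $a \in a^i \subset \Sigma_i = \partial_- C_i$, take the product arc-disk $a \times [0,1]$ in the collar $\Sigma_i \times [0,1]$ of $C_i$, after isotoping $a$ off the feet of the $1$--handles. Its boundary is $a$ together with two arcs in $\partial \Sigma$ (a vertical piece over $\partial a$ lying in $\partial_+ C_i$ near the binding) and a copy of $a$ on $\Sigma$ pushed off the $\alpha^i$ disks. I will do the same in $C_i'$ with $f_i(a)$. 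The two half-disks glue along $a \sim f_i(a)$ to a disk in $H_i$ whose boundary is the closed curve "$a \cup f_i(a)$" on $\Sigma \cup_f \Sigma'$. The main technical point is to check that the copies of the arcs on $\Sigma$ and $\Sigma'$ meet $\partial\Sigma$ at matching endpoints. This holds because $f$ respects the binding, and $a^i$ can be chosen with endpoints on $\partial\Sigma_i = \partial\Sigma$ away from the surgery regions.

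\textbf{Cut system and count.} Surger $\Sigma \cup_f \Sigma'$ first along $\alpha^i$ and the non-separating curves of $\beta^i$. The $\alpha^i$ surgery turns $\Sigma$ into $\Sigma_i$ (if $\alpha^i$ contains separating curves, they cut off $P$-components, and this is exactly what produces $\partial_- C_i$; I need to handle this carefully). On the $\beta$ side, I will use only $\beta^i_{ns}$, because the separating curves of $\beta^i$ become redundant, or homologically dependent, once glued. So after surgery I expect the surface to be $\Sigma_i \cup_{f_i} \Sigma_i'$, possibly with additional sphere components produced by separating surgeries, which I discard. This surface is the double of $\Sigma_i$, i.e. a union of doubles of the components of $P$. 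Cutting along $a^i \cup f_i(a^i)$ then reduces each double of $P_j$ to a sphere, because a cut system of arcs cuts $P_j$ into a disk and doubling gives $S^2$. Hence the whole family surgers $\Sigma\cup_f\Sigma'$ into a sphere (plus discarded spheres), so the curves are homologically independent and form a cut system.

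To close the argument I will verify the curve count against the genus from Remark \ref{rmk_bdy_cpnts}, $g_1 + g_2 + b - 1$. The expected main obstacle is the bookkeeping of separating curves in $\beta^i$ and of disconnected pages. It must confirm that dropping the separating $\beta$ curves (rather than the separating $\alpha$ curves) leaves exactly a connected genus-$(g_1+g_2+b-1)$-sized independent system. The same count must also show that the double of $\Sigma_i$, with the disconnected components joined through the surviving tubes, is handled correctly.
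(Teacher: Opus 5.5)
Your disk construction is the paper's own: you view $H_i$ as $1$--handles attached to $(\Sigma_i\times[-1,0])\cup_{f_i}(\Sigma_i'\times[0,1])$ and take $(a\times[-1,0])\cup(f_i(a)\times[0,1])$. The cut-system verification, which is the real content, goes wrong as written. After surgering along all of $\alpha^i$ and only $\beta^i_{ns}$, you do not get the double $\Sigma_i\cup_{f_i}\Sigma_i'=\bigsqcup_j D(P_j)$, where $D(P_j)$ is the double of the component $P_j$ of the page. That is what surgery along \emph{all} of $\beta^i$ would produce. Since the $\ell-1$ separating $\beta^i$ curves are left alone, the $\Sigma'$ side stays connected: it is $\Sigma_i'$ with its $\ell$ components joined by $\ell-1$ tubes, of genus $p$ with $b$ boundary components. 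So the surgered surface is the connected sum $D(P_1)\#\cdots\#D(P_\ell)$, of genus $2p+b-\ell$.

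The separating $\alpha^i$ curves do not create extra sphere components either. Each one cuts $\Sigma$ into two pieces that both contain components of $\partial\Sigma$ (otherwise $P$ would have a closed component). These pieces are reconnected through $\Sigma'$, so the curve is non-separating in $\Sigma\cup_f\Sigma'$.

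Finally, ``discarding spheres'' is not a legitimate move. A cut system must surger the surface into a single $S^2$. If your family produced several spheres, some curve would be separating at the moment it is surgered, hence homologically dependent on the others. That is exactly what would happen if the separating $\beta^i$ curves were kept: there would then be $g+g'+b-1+(\ell-1)$ curves, more than the genus. So your reason for dropping them is right, but the surface you then describe is the one in which they were not dropped.

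With the correct picture the argument closes immediately. Let $P_j$ have genus $p_j$ and $b_j$ boundary components. The $2p_j+b_j-1$ doubled arcs on $D(P_j)$ lie away from the tubes. Since cutting $P_j$ along its arcs gives a disk, their complement in $D(P_j)$ is connected, and $\chi$ rises from $2\chi(P_j)$ to $2$, so surgery turns $D(P_j)$ into a sphere. Hence the whole family surgers $\Sigma\cup_f\Sigma'$ into $S^2\#\cdots\#S^2=S^2$. The count $(g-p+\ell-1)+(g'-p)+(2p+b-\ell)=g+g'+b-1$ matches the genus of $\Sigma\cup_f\Sigma'$, so the curves are independent.

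The paper uses a different order: first the non-separating curves of $\alpha^i$ and $\beta^i$ (genus $2p+b-1$), then the $2p+b-\ell$ doubled arcs (genus $\ell-1$), then the $\ell-1$ separating $\alpha^i$ curves, which are non-separating in the closed surface. Either order works once the tubes are tracked. Your closing remark about ``surviving tubes'' points at this, but it is the step you need to carry out rather than defer.
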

\begin{proof}
Let $1 \leq i \leq n$. We construct a defining cut system for $H_i = C_i \cup_f C_{i}'$, on $ \partial H_i = \Sigma \cup_f \Sigma'$. Denote by $g$ and $g'$ the genus of $\Sigma$ and $\Sigma'$, and by $b$ their common number of boundary components. Let $p$ be the sum of the genera of the $\ell$ connected components of the page~$P$ of the diagrams. By construction, $\Sigma \cup_f \Sigma'$ is a closed surface of genus $g+g'+b-1$. Then~$\alpha^i$ (resp. $\beta^i$) consists in $g-p$ (resp. $g'-p$) non-separating curves and $\ell - 1$ separating curves. Every non-separating curve in $\alpha^i$ and $\beta^i$ bounds a disk in $H_i$. Surgering $\Sigma \cup_f \Sigma'$ along these $g+g'-2p$ curves results in a surface of genus $2p + b - 1$. 
\par Each of the $2p + b- \ell$ closed curves in $a^i \cup f_i(a^i)$ also bounds a disk in $H_i$. To prove this statement, consider an arc $a$ in $a^i$. Identify $H_i$ with the result of gluing $1$--handles to $(\Sigma_i \times [-1,0]) \cup_{f_i} (\Sigma_{i}' \times [0,1])$, along $\Sigma_i \times \lbrace -1 \rbrace$ and $\Sigma_{i}' \times \lbrace 1 \rbrace$. Then, $a \cup f_i(a)$ bounds the properly embedded disk $(a \times [-1,0]) \cup (f(a) \times [0,1])$ in $H_i$. 
\par After further surgering $\Sigma \cup_f \Sigma'$ along the $a^i \cup f_i(a^i)$ curves, we obtain a surface of genus~$\ell - 1$. Now we can use the $\ell -1$ separating curves in $\alpha^i$ (we could use symmetrically the separating curves in $\beta^i$), which also bound disks in $H_i$, to surger this surface into $S^2$. This gives us a cut system on $\Sigma \cup_f \Sigma'$ defining $H_i$.
\end{proof}
We define an \emph{arced diagram} as a tuple $\lbrace \Sigma, (\alpha^i, a^i) \rbrace$, where $\lbrace \Sigma, (\alpha^i) \rbrace$ is a relative multisection diagram and each $a^i$ is a cut system of arcs for $\Sigma_i$. 
\par We now give an algorithm to compute the cut systems of arcs $a^i$ and $f_i (a^i)$, using only the relative multisection diagrams and the action of $f$ on a single set of arcs (for instance,~$a^1$). This is an adaption to higher dimensions of the corresponding algorithm defined for relative trisection diagrams in \cite{castro2018diagrams}.
\par For every $i$ and $j$, the relative fibration on $\partial W$ (resp. $\partial W'$) induces a diffeomorphism~$\phi_{ij}$ (resp.~$\phi_{ij}'$) between $\Sigma_i$ and $\Sigma_{j}$ (resp. $\Sigma_{i}'$ and $\Sigma_{j}'$). Moreover, we have $\phi_{ij}' \circ f_i = f_j \circ \phi_{ij}$. Therefore~$f_j$ is determined by the data of $f_i$, and of the diffeomorphisms $\phi_{ij}$ and $\phi_{ij}'$. These diffeomorphisms can be computed from each relative multisection diagram as follows. 
\par We compute, for instance, $\phi_{ij}$. Let $a^i$ be a cut system of arcs on $\Sigma_i$. We handleslide~$a^i$ over $\alpha^i$ and $\alpha^j$ over $\alpha^j$ until we obtain a new set of arcs $a^j$ (handleslide equivalent to $a^i$) and curves $\tilde{\alpha}^j$ (handleslide equivalent to $\alpha^j$) that do not intersect. Therefore $a^j$ cuts $\Sigma_j$ into a disk. Then $\phi_{ij}$ is the unique (up to isotopy) diffeomorphism that sends $a^i$ to $a^j$. 
\par We iterate this procedure and derive a cut system of arcs $a^2$ for $\Sigma_2$ from a cut system of arcs~$a^1$ for $\Sigma_1$, then a cut system of arcs $a^3$ for $\Sigma_3$ from $a^2$, and so on until obtaining $a^n$ from~$a^{n-1}$. Let $\lbrace b^1,...,b^n \rbrace$ be cut systems of arcs on $\Sigma'$, obtained by applying the same algorithm to $\lbrace \Sigma', (\beta^1,...,\beta^n) \rbrace$, initiating with $b^1 = f_1(a^1)$. Then, for every $i$, $a^{i} \cup b^{i} = a^i \cup f_i(a^i)$. 
\begin{Remark} 
\label{rmk_standard}
We can always produce the advertised cut systems, because each subdiagram $\lbrace \Sigma, (\alpha^{i}, \alpha^{i+1})\rbrace$ and $\lbrace \Sigma', (\beta^{i}, \beta^{i+1})\rbrace$ is handleslide equivalent to the connected sum of $P \simeq \partial_- C_i$ with a number of genus--$1$ Heegaard diagrams of $S^1 \times S^2$ and $S^3$ (see Remark \ref{rmk_standard_rel_diag}).
\end{Remark}  
\begin{Remark}
We can perform the algorithm one more time, and handleslide $a^{n}$ over~$\alpha^{n}$ and $\alpha^{1}$ over $\alpha^{1}$ to get a new set of arcs $\tilde{a}^{1}$ and curves $\tilde{\alpha}^{1}$ that are disjoint, and finally handleslide $\tilde{a}^{1}$ and~$\tilde{\alpha}^{1}$ over $\tilde{\alpha}^{1}$ to obtain a new cut system $(\alpha^{1},c^{1})$. This allows to compare~$c^{1}$ to $a^{1}$ in $\Sigma_{1}$. Keeping the notations of the proof of Theorem \ref{thm_rel_diagram}, the monodromy of the open-book corresponding to the preimage of the loop $[p_1,...,p_n,p_1]$ in $S^{n-2}$ is the diffeomorphism that sends $c^1_{k}$ to $a^1_k$ for all $k$, as \cite[Theorem 5]{castro2018diagrams} states that this map is independent of the choices made while performing the algorithm. If $n=3$, we can compute a non-trivial monodromy, whereas in higher dimension, the monodromy obtained over any loop joining a number of $p_i$'s is necessarily isotopic to the identity.
\end{Remark}
\begin{Example}
\label{ex_gluing}
As an easy application, we recover the genus--$1$ multisection of $S^1 \times S^n$ of Example~\ref{ex_s1sn} by trivially gluing two copies of the relatively multisected $I \times S^n$ of Example~\ref{ex_multi_i_times_sn}. An associated diagram is obtained by gluing two copies of the diagram of Example \ref{ex_diag_s1_sn}. 
\end{Example}
\subsection{Boundary connected sum of relative multisections}
Given two relatively trisected $4$--manifolds $X$ and $X'$, \cite[Theorem 3.20]{castro2022relative} describes an algorithm which produces a relative trisection of $X \natural X'$, where the induced open book on $\partial (X \natural X')$ can be chosen to be any given Murasugi sum of the open books induced on $\partial X$ and $\partial X'$. This algorithm readily adapts to higher dimensions.
\begin{Theorem}
\label{thm_bcs_algo}
Let $n \geq 4$. Given two relatively $n$--sected $(n+1)$--manifolds $(W, \mathcal{W})$ and $(W', \mathcal{W'})$, there exists an explicit algorithm which derives a relative $n$--section for $W \natural W'$ from $\mathcal{W}$ and $\mathcal{W'}$. Moreover, we can choose the induced relative fibration on $\partial (W \natural W') \simeq \partial W \# \partial W'$ to be any given Murasugi sum of the relative fibrations on $\partial W$ and $\partial W'$.
\end{Theorem}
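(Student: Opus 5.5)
Our approach is to adapt the Castro--Islambouli--Miller--Tomova algorithm \cite[Theorem 3.20]{castro2022relative}, which realizes a boundary connected sum as a gluing of $W \sqcup W'$ with a relatively multisected $1$--handle. Fix a Murasugi sum of the two relative fibrations along properly embedded arcs $a \subset P$ and $b \subset P'$, with thickenings $A \simeq a \times [0,1]$ and $B \simeq b\times[0,1]$. Since $n \geq 4$, Theorem \ref{prop_rel_trivial} tells us that both relative fibrations are trivial, i.e. $\partial W \simeq \partial(P \times B^{n-1})$ and $\partial W' \simeq \partial(P' \times B^{n-1})$. Consequently the only datum in the Murasugi sum is the choice of arcs, and we never need to track monodromies.

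The construction has four steps.
\begin{enumerate}
\item Use the local model of Proposition \ref{local_model} at a point of $\partial \Sigma$, and at a point of $\partial\Sigma'$, lying on an endpoint of $a$ (resp. $b$). Near these points the multisection looks like $(D^2_+ \times B^{n-1}, \text{standard decomposition})$, where $D^2_+$ is a half disk. We then perform the boundary connected sum inside these local models. The resulting central surface is the boundary connected sum $\Sigma \natural \Sigma'$, and each piece $W_I \natural W'_I$ is a boundary connected sum of $P$-- and $P'$--based compression bodies. At this stage the induced page is $P \natural P'$, which is the trivial (connected) Murasugi sum.
\item To reach the Murasugi sum $P \ast_{a,b} P'$ in general, note that it is obtained from $P\natural P'$ by attaching one more band. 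Equivalently, it is the boundary connected sum followed by a Murasugi sum with an annulus, or a plumbing along the second arcs. Following \cite{castro2022relative}, we realize this by stabilizing the central surface near the binding. We add to $\Sigma\natural\Sigma'$ a band, running parallel to $a$ in $\Sigma$ and to $b$ in $\Sigma'$. Simultaneously, in each $3$--dimensional piece we add one curve that cancels this band in all but the intended way. This uses the explicit diagrammatic description: we add to the arced diagrams $\{\Sigma,(\alpha^i,a^i)\}$ and $\{\Sigma',(\beta^i,b^i)\}$ a tube and one new curve per family $\alpha^i$. These curves are built from the arcs of $a$ and $b$ transported to each $\Sigma_i$ by the algorithm of Section \ref{section_gluing}.
\item Check that the new tuple is an abstract relative $n$--section diagram with page $P \ast_{a,b} P'$ in the sense of Definition \ref{def_rel_diagram}. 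This means that every pair subdiagram is a sutured Heegaard diagram of $\partial_+$ of a $4$--dimensional $(P\ast P')$--based compression body, and similarly for larger subfamilies. We would reduce this to the dimension-$3$ and dimension-$4$ statements of \cite{castro2022relative} via Remark \ref{rmk_standard_rel_diag}: each pair is handleslide equivalent to the standard diagram of Figure \ref{fig_double_cmp}, and the modification is local. For larger subfamilies $I$, we would argue that the positive boundary of each higher piece is the connected sum described in Remark \ref{rmk_decomp_bdy_piece}, now with the new page.
\item Show that the realized manifold is $W\natural W'$ and that it induces the prescribed fibration. Here we cannot simply invoke Theorem \ref{thm_rel_diagram}, since it is only valid for $n\le 5$ smoothly. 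Instead we build the pieces directly as subsets of $W\natural W'$, by thickening the band and the cancelling disks within the $(n-1)$--ball factor of the local model. The induced fibration is then read off via Proposition \ref{prop_induced}.
\end{enumerate}

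We expect the main obstacle to be step 4 combined with the higher-piece check in step 3. We must verify that after adding the band, every $W_I$ remains a $(P\ast P')$--based compression body with the correct relative stratification, i.e. that the band attachment is a $1$--handle in each piece rather than changing the negative boundary incorrectly. We would first try to exhibit the band as the product of a $2$--dimensional relative trisection stabilization (from the $n=3$ case) with the standard decomposition of $B^{n-3}$. Compatibility with the $P$--based structure would then follow from the product structure, as in Example \ref{ex_sigma_bn}.
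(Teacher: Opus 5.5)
There is a genuine gap, and it is in Step~2. Your Step~1 is sound, but it only covers the special case of a Murasugi sum along a boundary-parallel arc (page $P\natural P'$, central surface $\Sigma\natural\Sigma'$).

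Step~2 rests on a false claim: $P\ast_{a,b}P'$ is \emph{not} $P\natural P'$ with one more band. Gluing along a rectangle gives $\chi(P\ast_{a,b}P')=\chi(P)+\chi(P')-1=\chi(P\natural P')$, whereas attaching a band lowers $\chi$ by one. Nor is a Murasugi sum with an annulus a harmless stabilization when $n\geq 4$. By Theorem~\ref{prop_rel_trivial}, a manifold relatively fibered with page $Q$ is $\#^{1-\chi(Q)}(S^1\times S^{n-1})$. The annular fibration is the trivial one on $S^1\times S^{n-1}$, and there is no analogue of the Hopf band, so that operation is a connected sum with $S^1\times S^{n-1}$. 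If the page really were $P\natural P'$ plus a band, the boundary would become $\partial W\#\partial W'\#(S^1\times S^{n-1})$, and the manifold could not be $W\natural W'$.

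The diagrammatic recipe you describe does not produce that page either. A band on $\Sigma\natural\Sigma'$ plus one new curve per family surgers to a page with $\chi=\chi(P)+\chi(P')$, one more than needed. Switching to a tube fixes the count but still fails. A tube leaves $\partial(\Sigma\natural\Sigma')$, i.e.\ the binding, unchanged. But $\partial(P\ast_{a,b}P')$ generally has a different number of components: plumbing two annuli along essential arcs gives $\Sigma_{1,1}$, not $\Sigma_{0,3}$. Passing from $P\natural P'$ to $P\ast_{a,b}P'$ means re-attaching a foot of the band dual to $b$ across $a$. That genuinely alters the pieces rather than decorating them.

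The missing idea is to plumb the central surfaces themselves. The paper takes a copy $a_1$ of $a$ in $\Sigma\setminus\alpha^1$ and a copy $b_1$ of $b$ in $\Sigma'\setminus\beta^1$, forms $\Sigma\ast_{a_1,b_1}\Sigma'$, and sets $\gamma^1=\alpha^1\cup\beta^1$. Surgery then yields exactly $P\ast_{a,b}P'$, with no extra curves.

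For the next family, it handleslides $a_1$ over $\alpha^1$, and $\alpha^2$ over itself, to get an arc $a_2$ disjoint from $\tilde\alpha^2$. This is possible because each pair subdiagram is handleslide equivalent to the model of Remark~\ref{rmk_standard_rel_diag}. It then moves $a_1$ to $a_2$ by a diffeomorphism $f_{12}$ of $\Sigma$, extended by the identity on $\Sigma'$ away from $b_1$, and sets $\gamma^2=\tilde\alpha^2\cup f_{12}(\beta^2)$. Iterating gives the diagram.

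This is where the arc transport of Section~\ref{section_gluing}, which you mention, really belongs. It keeps the plumbing region disjoint from every family, so every subdiagram surgers to the same page. Your caveat in Step~4 about invoking Theorem~\ref{thm_rel_diagram} for large $n$ is fair; the paper itself defers the identification with $W\natural W'$ to \cite{castro2022relative}.
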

\begin{proof}
We replicate the proof of \cite[Theorem 3.20]{castro2022relative}. We start with diagrams $\lbrace \Sigma , (\alpha^i)_{1 \leq i \leq n} \rbrace$ (with page $P$) and $\lbrace \Sigma' , (\beta^i)_{1 \leq i \leq n} \rbrace$ (with page $P'$), associated respectively to $\mathcal{W}$ and $\mathcal{W}'$. Consider a Murasugi sum of the fibrations on $\partial W$ and $\partial W'$, along arcs $a \subset P$ and $b \subset P'$. We construct a diagram $\lbrace \Sigma \ast \Sigma', (\gamma^i) \rbrace$ as follows. Let $a_1$ be a copy of $a$ on $\Sigma \setminus \alpha^1$. Let $b_1$ be a copy of $b$ on $\Sigma' \setminus \beta^1$. We perform the Murasugi sum~$\Sigma \ast \Sigma'$ along $a_1$ and $b_1$, and we set $\gamma^1 = \alpha^1 \cup \beta^1$. 
\par As the subdiagram $\lbrace \Sigma, \alpha^1, \alpha^2 \rbrace$ is handleslide equivalent to a model diagram described in Remark \ref{rmk_standard_rel_diag}, we can handleslide $a_1$ along $\alpha^1$ curves to obtain an arc $a_2$, and $\alpha^2$ curves along $\alpha^2$ curves to obtain a cut system $\tilde{\alpha}^2$, so that $a_2$ is disjoint from $\tilde{\alpha}^2$. We choose a diffeomorphism of~$\Sigma$, sending $a_1$ to $a_2$, which we extend to $\Sigma \ast \Sigma'$ by the identity on $\Sigma'$ minus a collar of $b_1$. We denote by $f_{12}$ this diffeomorphism. Now the cut systems $\tilde{\alpha}^2$ and $f_{12}(\beta^2)$ are disjoint in $\Sigma \ast \Sigma'$ and we set $\gamma^2 = \tilde{\alpha}^2 \cup f_{12}(\beta^2)$. We can iterate the process until constructing a valid abstract relative \mbox{$n$--section diagram} $\lbrace \Sigma \ast \Sigma', (\gamma^i) \rbrace$ (relying on the fact that each subdiagram $\lbrace \Sigma, \tilde{\alpha}^i, \alpha^{i+1} \rbrace$ is handleslide equi\-valent to a model diagram described in Remark \ref{rmk_standard_rel_diag}). Because we started from diagrams associated to $W$ and $W'$, this diagram defines a multisected $(n+1)$--manifold. An easy generalization of the argument in \cite[Theorem~3.20]{castro2022relative} shows that this manifold is indeed~$W \natural W'$, and that the decomposition induced on the boundary of $W \natural W'$ is the requested Murasugi sum.        
\end{proof}  
\section{Existence in dimension 5}
\label{section_existence}
\begin{Theorem}
\label{existence_relative_trisections}
Let $W$ be a smooth, compact, oriented $(n+1)$--manifold, whose boundary admits a relative fibration. Then, for $2 \leq n \leq 4$, there exists a relative $n$--section inducing this relative fibration.
\end{Theorem}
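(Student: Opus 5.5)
For $n=2$ and $n=3$ the statement is already known: relative $2$--sections are balanced sutured Heegaard splittings, whose existence for a given balanced sutured decomposition of the boundary is classical (Juh\'asz), and relative $3$--sections are relative trisections, whose existence for a prescribed open book on the boundary is due to Gay--Kirby and Castro. So I will only treat $n=4$, i.e. $\dim W=5$, and follow the Morse-theoretic strategy of Gay--Kirby, adapted as in \cite{aribi2023multisections}.

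First I will normalize the boundary. By Theorem \ref{prop_rel_trivial}, each component of $\partial W$, with its relative fibration, is identified with $\partial(P\times B^{3})$, where $P$ is the page and $\mathcal{B}=\partial P\times\{0\}$. Using Example \ref{ex_sigma_bn} and Example \ref{ex_b_n}, I build a collar $\partial W\times[0,1]$ equipped with a relative $4$--section. Concretely, I glue $P\times B^3$ to $\partial W$ along $P\times S^2$. Equivalently, I decompose a neighborhood of the boundary into the four pieces $\pi^{-1}(B_i)\times[0,1]$, where $S^2=\cup B_i$ is the standard decomposition, and I thicken the binding into $\partial P\times B^3$. This produces a model of $W$ near $\partial W$ whose inner boundary is again a relatively fibered copy of $\partial W$. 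The inner boundary is already cut into four pieces $P\times B_i$, together with the region near the binding.

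Next I take a self-indexing Morse function on $W$ relative to the inner boundary of the collar. With handle cancellation I remove $0$--handles and (dually) $5$--handles, so that $W$ is the collar together with $1$--, $2$--, $3$-- and $4$--handles. The $1$--handles are attached to the first piece $W_1$ as a compression body: it becomes $P$--based, with positive boundary a connected sum of $P\times B^{2}$ with copies of $S^1\times S^3$. The $4$--handles are turned upside down into $1$--handles for $W_4$. The $2$-- and $3$--handles are the real content. As in \cite{aribi2023multisections}, which handles the closed case in dimension $5$, I view the middle level $M$, a closed $4$--manifold, as carrying a relative trisection induced from the previous steps. The attaching spheres of the $2$--handles are then a framed link in $M$, and the belt spheres of the $3$--handles, seen from above, are also $1$--dimensional links in $M$. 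After stabilizing the induced relative trisection of $M$, I make these links lie in the central surface, or be bridged with respect to the trisection. I then distribute the $2$--handles to $W_2$ and the $3$--handles to $W_3$, so that each becomes a compression body built from a thickened $3$--dimensional piece plus $1$--handles. Each lower-dimensional intersection must then be checked to be a $P$--based compression body with the right negative boundary, using Remark \ref{rmk_dec_pos_bdy} and Remark \ref{rmk_decomp_bdy_piece}.

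The main obstacle is this middle step: putting the attaching links of the $2$--handles simultaneously into a position compatible with a relative trisection of the level $4$--manifold (which has boundary, with an open-book structure matching the fibration over the equator of $S^2$), and making sure that the handles of $W_2$ and $W_3$ do not disturb the $P\times B$ structure near $\partial W$. My first attempt will be to follow the ``Heegaard splitting of the middle level and push the link onto the Heegaard surface'' argument. I will keep every handle slide and stabilization supported away from the collar, and use relative trisection stabilizations (Remark \ref{rmk_stab}) to gain enough room. Finally I will verify, using Proposition \ref{prop_induced}, that the induced relative fibration on $\partial W$ is the prescribed one. This holds by construction, since the collar structure was fixed at the start.
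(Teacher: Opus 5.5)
\paragraph{The gap: the middle level.} Your outline matches the paper's: a Morse function adapted to the fibration, $1$--handles absorbed into $W_1$, $4$--handles into $W_4$, $2$-- and $3$--handles into $W_2$ and $W_3$, and a decomposition of a middle level. But the step you flag as the main obstacle is the whole content of the $n=4$ case, and your picture of it is dimensionally wrong.

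In a $5$--manifold, the attaching circles of the $2$--handles live in levels in $(1,2)$, and the belt circles of the $3$--handles live in levels in $(3,4)$, with the handles themselves in between. So no single level contains both as $1$--dimensional links. The middle level $X=f^{-1}(\{2.5\})$ is not closed: $\partial X=X\cap\partial W$ carries the open book induced by the relative fibration. In $X$ you see instead two families of $2$--spheres, the ascending spheres $L_2^+$ of the index--$2$ points and the descending spheres $L_3^-$ of the index--$3$ points, meeting transversally in points.

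The Gay--Kirby move of pushing a link onto a central surface also does not transfer. Suppose $W_2$ is built as a $4$--dimensional ($P$--based) compression body times $I$ plus the $2$--handles. Its new face is that piece surgered along $k\geq 1$ circles. This raises $\chi$ by $2k$ but lowers $b_1$ by at most $k$, so it forces $b_2\neq 0$. That face is therefore never a compression body and must be split into two pieces.

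What is missing is the paper's Lemma \ref{lemma_quadrisection}: a special relative quadrisection $X=X_1\cup X_2\cup X_3\cup X_4$ of the middle level with the following properties.
\begin{itemize}
\item It induces the given open book on $\partial X$.
\item It is cyclic: non-adjacent pieces meet only along the central surface.
\item $X_1\cup X_2$ (resp. $X_2\cup X_3$) is a tubular neighborhood of $L_2^+$ (resp. $L_3^-$) with $1$--handles attached.
\end{itemize}
With this lemma the pieces are as follows. $W_2$ is $(X_1\cup X_2)\times I$ plus the $2$--handles, seen upside down as $3$--handles along $L_2^+$; each caps off an $S^2\times D^2$, so $W_2$ is a compression body. $W_{12}$ is $X_1\cup X_2$ surgered along $L_2^+$, which turns each $S^2\times D^2$ into $S^1\times D^3$. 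The same holds symmetrically for $W_3$ and $W_{34}$ along $L_3^-$. The $3$-- and $2$--dimensional pieces are those of the quadrisection of $X$.

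A relative trisection of the middle level, even with the spheres in bridge position, does not provide this four-piece cyclic structure. Building it takes real work: removing disks transverse to $L_2^+\cup L_3^-$ so that their neighborhood becomes a union of handlebodies; splitting each index--$2$ handle of an auxiliary Morse function into a $1$--handle and two cancelling $2$--handles; and choosing a sutured Heegaard splitting of $\partial_+X_2$ whose sides contain the right links.

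\paragraph{A second problem: the Morse setup.} A Morse function relative to the whole (inner) boundary attains its maximum in the interior. So $5$--handles cannot all be cancelled, and $W$ is never the collar plus $1$-- through $4$--handles. Also, gluing $P\times B^3$ along $P\times S^2\subset\partial W$ attaches handles rather than producing a collar.

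The paper's Lemma \ref{lemma_morse_bdy} instead takes $f_{\lvert \partial W}$ to be the fibration, extended over $\nu\mathcal{B}$, composed with a projection of $B^3$ onto an arc. Then:
\begin{itemize}
\item $f$ has no critical points on a collar of $\partial W$;
\item $f^{-1}(\{0\})\cap\partial W$ and $f^{-1}(\{5\})\cap\partial W$ are copies of $P\times B^2$, and the rest of $\partial W$ is vertical;
\item there are no index--$0$ or index--$5$ points;
\item $W_1\simeq f^{-1}([0,1.5])$ is $P$--based with negative boundary $P\times B^2$.
\end{itemize}
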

Note that, because a PL $(n+1)$--manifold admits a smooth structure if $n \leq 6$, this result remains valid in the PL category. We can find a proof of the $3$--dimensional case by \textcite{juhasz2006holomorphic}. The $4$--dimensional case was originally proven by \textcite[Theorem 20]{gay2016trisecting}. A fully detailed proof, explicitly covering the case of a disconnected boundary, was written by \textcite[Theorem 5]{castro2016relative} afterwards. Before addressing the $5$--dimensional case, we start by giving a proof in dimension $4$, based on \cite{castro2016relative} and \cite{lambert2021trisections}. The interest of this proof is that it is rather compact, and provides great insight into the $5$--dimensional case. We need the following lemma, which applies in every dimension, and will be used in both proofs (dimension $4$ and $5$).
\begin{Lemma}
\label{lemma_morse_bdy}
Let $W$ be an $(n+1)$--dimensional compact smooth manifold, whose boundary admits a relative fibration $(\mathcal{B}, \pi)$, with page $P$. Then there exists a closed tubular neighborhood $\nu \mathcal{B}$ of $\mathcal{B}$ in~$\partial W$, and a smooth function $f:W \rightarrow [0,n+1]$, verifying the following conditions.
\begin{itemize}
\item Denoting by $S^{n-2} = B_0 \cup (\partial B_0 \times I) \cup B_1$ a decomposition of $S^{n-2}$ into two \mbox{$(n-2)$--balls} and a thickening of the boundary of the first ball, $f^{-1}(\lbrace 0 \rbrace) \cap \partial W = \pi^{-1}(B_0) \setminus Int(\nu \mathcal{B}) \simeq P \times B^{n-2}$ and  $f^{-1}(\lbrace n+1 \rbrace) \cap \partial W = \pi^{-1}(B_1) \setminus Int(\nu \mathcal{B}) \simeq P \times B^{n-2}$;
\item for $0 < t < n+1$, $f^{-1}(\lbrace t \rbrace) \cap \partial W \simeq (P \times \partial B^{n-2}) \cup (\partial P \times B^{n-2})$;
\item $f$ is a self-indexing Morse function in the interior of $W$, with no critical points of index $0$ and $n+1$, and no critical points in a collar of $\partial W$.
\end{itemize}    
\end{Lemma}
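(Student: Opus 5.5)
The plan is to build $f$ first on $\partial W$ from the relative fibration, then extend it over a collar of $\partial W$, and finally extend it over the rest of $W$ with standard Morse-theoretic moves.

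\textbf{Step 1: a model near the binding.} Fix the decomposition $S^{n-2} = B_0 \cup (\partial B_0 \times I) \cup B_1$. By the proof of Theorem \ref{prop_rel_trivial}, using the trivialization of Lemma \ref{lemma_fibration}, there is a closed tubular neighborhood $\nu\mathcal{B} \simeq \mathcal{B} \times D^{n-1}$. We choose it so that, in the coordinates $D^{n-1} \setminus \{0\} \simeq (0,1] \times S^{n-2}$, the map $\pi$ is the angular projection onto $S^{n-2}$. For $n \geq 4$ this can also be taken directly from the global product structure $\partial W \simeq \partial(P \times B^{n-1})$ given by Theorem \ref{prop_rel_trivial}. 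For $n=3$ (open books) and $n=2$ (sutured surfaces) the same local model near the binding is standard.

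\textbf{Step 2: define $f$ on $\partial W$.} Let $h: S^{n-2} \to [0,n+1]$ be smooth, equal to $0$ exactly on $B_0$, equal to $n+1$ exactly on $B_1$, and equal to $(n+1)t$ on $\partial B_0 \times \{t\}$ for $0<t<1$. On $\overline{\partial W \setminus \nu\mathcal{B}}$ set $f = h\circ\pi$.

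Inside $\nu\mathcal{B} = \mathcal{B} \times D^{n-1}$ we extend $f$ so that the level sets restrict to the required pieces. Write $D^{n-1}$ as a cone on $S^{n-2}$ and think of it as a ball with corners $B^{n-2} \times [0,1]$. Its boundary decomposes into the part $\pi^{-1}(B_0)$, the part $\pi^{-1}(B_1)$ and a middle band. Choose a smooth function $g$ on $D^{n-1}$ that agrees with $h$ on the outer sphere. We want $g^{-1}(t)$ to be a disk meeting the boundary along $h^{-1}(t)$ for $0<t<n+1$, and $g^{-1}(0)$, $g^{-1}(n+1)$ to lie only on the boundary. A concrete choice is a height function along an axis from $B_0$ to $B_1$, modified radially. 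Then $f^{-1}(t)\cap\partial W$ is $P \times h^{-1}(t)$ glued to $\partial P \times (\text{disk})$, which is $(P\times \partial B^{n-2}) \cup (\partial P \times B^{n-2})$ up to smoothing.

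At $t=0$ and $t=n+1$ the condition only asks for the level set to be $\pi^{-1}(B_i)\setminus \mathrm{Int}(\nu\mathcal{B})$. Reconciling this with the smoothness of $g$ near the binding is the delicate point. I would first try to make $f$ locally constant equal to $0$ on a region rather than demanding regular values at $0$ and $n+1$. Since $0$ and $n+1$ are the extreme values, the statement allows $f^{-1}(0)$ to be a codimension-zero piece of $\partial W$.

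\textbf{Step 3: extend over a collar and into $W$.} On a collar $\partial W \times [0,\epsilon]$, interpolate so that $f$ has no critical points there. Let $f(x,s) = f|_{\partial W}(x) + \lambda(s)$, where $\lambda$ is chosen to push the function toward the interior values. The intent is that $W$ is a relative cobordism from $f^{-1}(0)\cap\partial W$ to $f^{-1}(n+1)\cap\partial W$, with the side boundary swept out by the middle levels. Next extend $f$ arbitrarily to $W$ and perturb it rel the collar to a Morse function, which is possible by genericity. Then apply the standard rearrangement to make it self-indexing.

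To remove the minima and maxima: $W$ is connected and the bottom piece $P\times B^{n-2}$ is nonempty with each component meeting the boundary, so each index-$0$ critical point cancels against an index-$1$ point. This is the usual cancellation for relative cobordisms with nonempty bottom, as in Milnor's $h$-cobordism arguments. Dually, each index-$(n+1)$ point cancels against an index-$n$ point using the nonempty top. Finally rescale so that the values lie in $[0,n+1]$.

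The main obstacle I expect is Step 2 near the binding. The level sets must be arranged so that $f$ is smooth at $\mathcal{B}$ while the extreme levels are exactly the prescribed $P\times B^{n-2}$ regions. Compatibility with the corners of the collar in Step 3 is the secondary difficulty.
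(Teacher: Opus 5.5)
Your proposal follows the paper's proof: a tubular neighbourhood of $\mathcal{B}$ on which $\pi$ is the angular coordinate, and a function on $\partial W$ obtained by projecting $B^{n-1}$ onto an arc so that it is constant exactly over $B_0$ and $B_1$. Your axis height function $g$ with $g|_{S^{n-2}}=h$ is the paper's identification of $B^{n-1}$ with a hypercube projected onto one edge; extension over a collar, Morse approximation rel the collar and rearrangement to a self-indexing function then follow as in the paper, and your cancellation of index-$0$ and index-$(n+1)$ points against index-$1$ and index-$n$ points supplies a justification the paper simply assumes.

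One detail needs repair. A single additive term $\lambda(s)$ cannot push both extreme regions inward: it sends the collar over $\pi^{-1}(B_1)$ above $n+1$ or the collar over $\pi^{-1}(B_0)$ below $0$. The push must therefore depend on the region (upward over the bottom region, downward over the top one, none along the side), whereas the paper simply sets $F(x,t)=F(x)$ on the collar.
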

\begin{proof}
We parametrize $B^{n-1}$ by $(r,\theta) \in [0,1] \times S^{n-2}$. By Theorem \ref{prop_rel_trivial}, there exists a closed tubular neighborhood $\nu \mathcal{B}$ of $\mathcal{B}$ in~$\partial W$, parametrized by $\phi:\mathcal{B} \times B^{n-1} \rightarrow \nu \mathcal{B}$, such that the restriction of $\pi$ to $\nu \mathcal{B} \setminus \mathcal{B}$ is the projection on the angular coordinate of~$B^{n-1}$. We denote by $\tilde{\pi}$ the restriction of $\pi$ to $\partial W \setminus Int(\nu \mathcal{B})$. We extend~$\tilde{\pi}$ to a smooth map from~$\partial W$ to $B^{n-1}$, by setting $\pi \circ \phi(x,r,\theta)=(r,\theta)$ for $(x,r,\theta) \in \mathcal{B} \times B^{n-1}$. We choose an arc in~$\partial B^{n-1}$ and project~$B^{n-1}$ onto this arc, identified with $[0,n+1]$ (see Figure~\ref{fig_sproj_d2} for a $2$--dimensional example). This amounts to identify $B^{n-1}$ with the hypercube $I_1 \times ... \times I_{n-1}$ and project on one edge. We call $F$ the composition of this projection with the extension of $\tilde{\pi}$. This function, from $\partial W$ to $[0,n+1]$, verifies:
\begin{itemize}
\item $F^{-1}(\lbrace 0 \rbrace) \simeq F^{-1}(\lbrace n+1 \rbrace) \simeq P \times B^{n-2}$;
\item $F^{-1}(\lbrace t \rbrace) \simeq (P \times \partial B^{n-2}) \cup (\partial P \times B^{n-2})$, for $0 < t < n+1$;
\item $F$ has no critical points as it is locally a projection. 
\end{itemize}
\begin{figure}[h!]
\[
\begin{tikzpicture}[scale=0.2]
\node (mypic) at (0,0) {\includegraphics[scale=0.3]{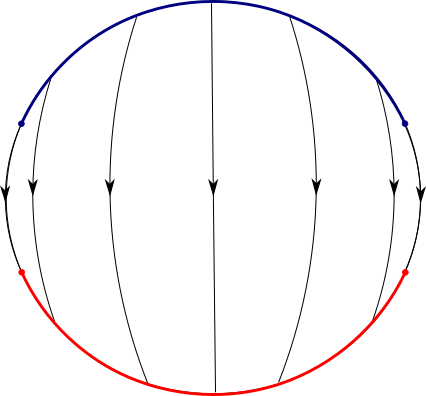}};
\end{tikzpicture}
\]
\caption{Projection of $D^2$ on the red arc}
\label{fig_sproj_d2}
\end{figure} 
We extend $F$ to a collar neighborhood of $\partial W$ by $F(x,t) = F(x)$ for $(x,t) \in \partial W \times I$, without creating critical points. We further extend $F$ to $W$, and take a Morse approximation of the resulting function. As there are no critical points in $\partial W \times I$, this approximation agrees with $F$ on a collar of $\partial W$. For the same reason, we can modify this Morse function away from a collar of $\partial W$ to obtain a self-indexing function, that agrees with $F$ on this collar. We call $f$ this approximation, which we assume has no critical points of index $0$ and $n+1$: this is the requested function. 
\end{proof}
\begin{Theorem}[Gay and Kirby]
\label{existence_dim4} 
Let $W$ be a smooth, compact, oriented $4$--manifold with non-empty boundary, together with a relative fibration of $\partial W$. Then $W$ admits a relative trisection that induces this relative fibration.
\end{Theorem}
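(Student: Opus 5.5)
We will prove Theorem \ref{existence_dim4} using the Morse function $f: W \rightarrow [0,5]$ from Lemma \ref{lemma_morse_bdy} with $n=3$. We will build the three pieces from its handle decomposition relative to the boundary data, following \cite{castro2016relative} and \cite{lambert2021trisections}. Since $n=3$, the relative fibration is an open book: $f^{-1}(0)\cap\partial W \simeq P\times I$ is a thickened page, and the level sets $f^{-1}(t)\cap\partial W$ are copies of $(P\times\partial I)\cup(\partial P\times I)$, i.e. doubled pages. After standard cancellation and rearrangement, $f$ has critical points only of index $1$, $2$ and $3$. We will arrange that $W = W_0 \cup (\text{$1$--handles}) \cup (\text{$2$--handles}) \cup (\text{$3$--handles})$. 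Here $W_0 = f^{-1}([0,\epsilon])$ is a collar-like piece diffeomorphic to $P\times I\times I$, which is the $P$--based structure we start from.

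The first piece is $W_1 = W_0 \cup (\text{$1$--handles})$, which is a $P$--based $4$--dimensional compression body. Its positive boundary $M = \partial_+ W_1$ is $(P\times I)$ doubled with $1$--handles attached, namely $\#$ of copies of $S^1\times S^2$ with the product $P\times I$. The $2$--handles attach along a framed link $L \subset M$. As in the closed Gay--Kirby argument, the key is to find a sutured Heegaard splitting of $M$, compatible with the $P$--based structure (page $P$ at the negative boundary), in which $L$ lies on the Heegaard surface $\Sigma$ as a set of curves. These curves should be dual to a subset of the compressing curves of one compression body, with surface framing equal to the handle framing.

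We obtain such a splitting by starting from the standard sutured Heegaard diagram of $\partial_+ W_1$ (Remark \ref{rmk_standard_rel_diag}) and projecting $L$ onto $\Sigma$. We then stabilize repeatedly: one stabilization removes each crossing, and others adjust framings and create the required duals. This is the step we expect to be the main obstacle. The stabilizations must happen in the interior, away from $\partial \Sigma$ and the page structure, so that the negative boundaries remain copies of $P$ glued along the binding and the induced open book is unchanged. Once this is arranged, we set $W_2 = $ a collar of one compression body $C_\alpha$ union the $2$--handles. Attaching $2$--handles along curves dual to compressing disks cancels $1$--handles, so $W_2$ is again a $P$--based compression body.

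The third piece is $W_3 = \overline{W\setminus(W_1\cup W_2)}$, which is $f^{-1}([3-\epsilon,5])$ together with the complementary compression body. Turning $f$ upside down, it consists of $P$--based data together with the $3$--handles, which are dual $1$--handles, so $W_3$ is also a $P$--based compression body. We then check the pairwise intersections. $W_1\cap W_2$ and $W_2\cap W_3$ are $3$--dimensional compression bodies from the splitting. $W_1\cap W_3$ is the other compression body of the induced splitting of $\partial_+W_3$; this uses that $\partial_+ W_3$ is the positive boundary of a $P$--based compression body, and uniqueness in the style of Laudenbach--Poénaru via Proposition \ref{prop_extension}. Finally, by construction the negative boundaries $W_i\cap\partial W$ are the three sectors $\pi^{-1}$ of a decomposition of $S^1$ into three arcs, so the decomposition induces the given relative fibration.
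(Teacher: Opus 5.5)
Your proposal is correct in outline. Its architecture matches the paper's: $W_1\approx f^{-1}([0,3/2])$, $W_2\approx$ (a compression body of the middle level) $\times I$ together with the $2$--handles, and $W_3\approx f^{-1}([5/2,4])$. The difference is in how you produce the adapted sutured Heegaard splitting of $M=f^{-1}(3/2)$.

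\textbf{Your route versus the paper's.} You use the classical Gay--Kirby/Castro device: project the attaching link $L$ onto $\Sigma$, then stabilize until $L$ lies on $\Sigma$, dual to $\alpha$--curves, with surface framing equal to the handle framing. The paper never puts $L$ on the surface. It builds one compression body around the link, $A=\nu_L\cup(P\times[a,a'])\cup(1\text{--handles})$. It then takes \emph{any} sutured Heegaard splitting of $\overline{f^{-1}(3/2)\setminus A}$, whose existence is quoted, and sets $C_1=A\cup(\text{one side})$ and $C_2=$ the other side.

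The two conditions are equivalent: a link on $\Sigma$ dual to $\alpha$--curves, once pushed into $C_\alpha$, exhibits $C_\alpha$ as $\nu_L\natural C'$.

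\textbf{What each route buys.} The paper's formulation needs no projection, no crossing-resolving stabilizations and no framing bookkeeping. The $2$--handles cancel the $S^1\times D^3$ summands of $C_1\times I$ whatever their framing. Likewise $W_{23}$, obtained by integer surgery on the cores of the solid tori $\nu_L$, is a compression body for every framing. This is also the version that carries over to dimension $5$ (Lemma~\ref{lemma_quadrisection}). There the attaching objects are $2$--spheres in a $4$--dimensional level set, and ``lying on the central surface'' has no analogue.

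Your version buys an explicit diagram: the third cut system is $L$ together with the $\alpha$--curves not dual to it. The price is proving the relative stabilization lemma you flag as the main obstacle. That includes pushing $L$ into a collar of $\Sigma$ inside a sutured splitting, and sliding the remaining $\alpha$--curves off $L$ over the dual curves. The paper sidesteps all of this.

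\textbf{Corrections.}
\begin{itemize}
\item Lemma~\ref{lemma_morse_bdy} with $n=3$ gives $f: W\to[0,4]$, not $[0,5]$.
\item Your final check puts the work in the wrong place. $W_1\cap W_3$ is simply the complementary compression body $C_\beta$ of the splitting of $M$. No uniqueness result is involved; Proposition~\ref{prop_extension} concerns extending boundary diffeomorphisms and plays no role in existence.
\item The intersection that needs an argument is $W_2\cap W_3$. It is not a piece of the splitting of $M$; it is $C_\alpha$ surgered along $L$, and it is a compression body precisely because of the duality you arranged.
\item Push $L$ slightly into $\mathrm{Int}(C_\alpha)$, so that the upward flow of $C_\beta$ misses the $2$--handles.
\item Take ``dual'' to mean $|L_i\cap\alpha_j|=\delta_{ij}$, so that the handle cancellations can be performed one at a time.
\end{itemize}
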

\begin{proof}
Let $f$ be a smooth function from $W$ to $[0,4]$, as defined in Lemma \ref{lemma_morse_bdy}. We choose a gradient-like vector field $\chi$ on $W$, so that the couple $(f, \chi)$ is Morse--Smale in the interior of $W$.
\par The level set $f^{-1}(\lbrace 3/2 \rbrace)$ is diffeomorphic to a connected sum, whose summands are the components of $P \times [a,b]$ (with $[a,b]$ some interval) and a number of copies of $S^1 \times S^2$ (corresponding to the non-connecting $1$--handles of $f$). It is a compact $3$--manifold, together with a natural sutured decomposition of its boundary: $\partial (f^{-1}(\lbrace 3/2 \rbrace)) \simeq (P \times \lbrace a \rbrace) \cup (\partial P \times [a,b]) \cup (P \times \lbrace b \rbrace)$. The interior of $f^{-1}(\lbrace 3/2 \rbrace)$ intersects the descending manifolds of the critical points of index $2$ of $f$ along a link $L$, whose components are the attaching spheres of the $2$--handles. We denote by $\nu_L$ a closed tubular neighborhood of this link in $f^{-1}(\lbrace 3/2 \rbrace)$. 
\par We construct a particular sutured Heegaard splitting of $f^{-1}(\lbrace 3/2 \rbrace)$, inducing the sutured decomposition of $\partial f^{-1}(\lbrace 3/2 \rbrace)$ described above. We want one of the compression bodies of this splitting to be the connected union of the components of $\nu_L$, a collar of $P \times \lbrace a \rbrace$, and $1$--handles. As $f^{-1}(\lbrace 3/2 \rbrace)$ is connected, we can connect the components of $\nu_L$ and the components of a collar neighborhood of $P \times \lbrace a \rbrace$ (denoted by $P \times [a,a']$) with $3$--dimensional $1$--handles. Let $A$ be the resulting tridimensional compression body, with negative boundary $P \times \lbrace a \rbrace$. We consider the manifold $\overline{f^{-1}(\lbrace 3/2 \rbrace) \setminus A}$, with the sutured decomposition of its boun\-dary between $P \times \lbrace b \rbrace$ and $\partial \overline{A \setminus (P \times [a,a'])}$. We choose (see, for instance, \cite{dissler2023relative}) a sutured Heegaard splitting of $\overline{f^{-1}(\lbrace 3/2 \rbrace) \setminus A}$ corresponding to this decomposition of the boundary. One of the compression bo\-dies of this sutured Heegaard splitting is a collar neighborhood of $\partial \overline{A \setminus (P \times [a,a'])}$ with \mbox{$1$--handles} attached. We denote by $C_1$ the union of $A$ and this compression body, and by $C_2$ the other compression body of the sutured Heegaard splitting (see Figure \ref{fig_dec_level_set}). By construction, $C_1$ and $C_2$ are $3$--dimensional compression bodies, with negative boundary $P \times \lbrace a \rbrace$ or $P \times \lbrace b \rbrace$, and positive boundary the Heegaard surface of the sutured Heegaard splitting. Moreover, as wanted, $C_1$ is the connected union of the components of $\nu_L$, a collar of $P \times \lbrace a \rbrace$, and $1$--handles.
\par Recall that $\nu_L$ is a tubular neighborhood of the attaching link of the $4$--dimensional $2$--handles. Therefore, by flowing up $\chi$, we can thicken $C_2$ to a $4$--dimensional subset of $f^{-1}([3/2,5/2])$, without hitting the $4$--dimensional $2$--handles. We identify this subset with the cylinder $C_2 \times [3/2,5/2]$. We set $W_1 = f^{-1}([0,3/2]) \cup (C_2 \times [3/2,2])$, and $W_3 = f^{-1}([5/2,4]) \cup (C_2 \times [2,5/2])$. As $W_1$ is isotopic to $f^{-1}([0,3/2])$ and $W_3$ is isotopic to $f^{-1}([5/2,4])$, $W_1$ and $W_3$ are $4$--dimensional compression bodies, with negative boundary diffeomorphic to the product of $P$ with an interval. We denote by~$W_2$ the closure of their complement in $W$ (see Figure \ref{fig_morse_function_dim4} for a schematic of the decomposition). We claim that~$W_2$ is also a $4$--dimensional handlebody: it is diffeomorphic to the union of $C_1 \times I$ with $2$--handles, but all these $2$--handles are canceled by $1$--handles in $C_1 \times I$. 
\par As for the tridimensional intersections, $W_{13} \simeq C_2$, $W_{12} \simeq C_1$. The remaining tridimensional intersection, $W_{23}$, is the result of surgery on $C_1$ along $L$. Recall that $C_1$ is a boundary connected sum of a tubular neighborhood of $L$ with a compression body (with negative boundary a copy of~$P$). Therefore the result of the surgery is again a boundary connected sum, with summands a tubular neighborhood of a link of circles and a tridimensional collar of $P$: it is a compression body with negative boundary a copy of $P$. Finally, $W_{123}$ is isotopic to the Heegaard surface of the sutured Heegaard splitting of $f^{-1}(\lbrace 3/2 \rbrace)$. Therefore $W=\cup_{i=1}^3 W_i$ is a relative trisection of $W$, inducing the required open-book decomposition on $\partial W$.  
\end{proof}
\begin{figure}[h!]
\[
\begin{tikzpicture}[scale=0.3]
\node (mypic) at (0,0) {\includegraphics[scale=0.3]{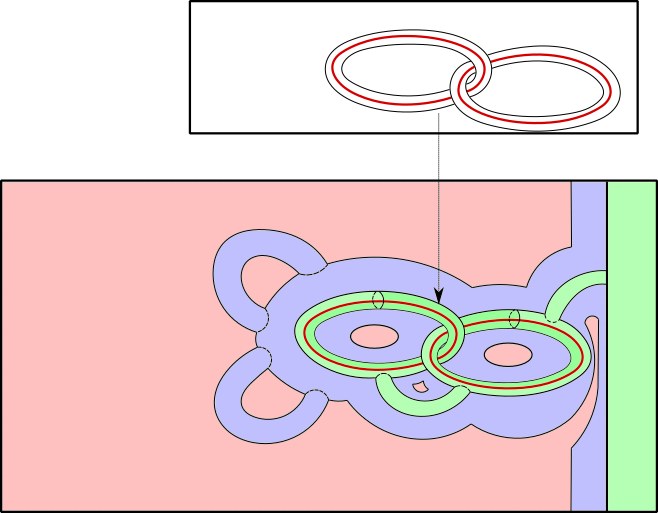}};
\node[text=red] (mypic) at (-5,-3) {$C_2$};
\node[text=blue] (mypic) at (5,-4.25) {$C_1$};
\node (mypic) at (2,1.3) {$\chi$};
\node (mypic) at (-6.2,5) {$f^{-1}(\lbrace 2 \rbrace)$};
\node[text=red] (mypic) at (0,-0.8) {$L$};
\node[text=green] (mypic) at (8,-3.9) {$A$};
\node (mypic) at (-11.8,-2) {$f^{-1}(\lbrace 3/2 \rbrace)$};
\node (mypic) at (8.7,-7.7) {$a$};
\node (mypic) at (7.4,-7.5) {$a'$};
\node (mypic) at (-8.7,-7.5) {$b$};
\end{tikzpicture}
\]
\caption{Decomposition of the level set $f^{-1}(\lbrace 3/2 \rbrace)$}
\label{fig_dec_level_set}
\end{figure} 
\begin{figure}[h!]
\[
\begin{tikzpicture}[scale=0.3]
\node (mypic) at (0,0) {\includegraphics[scale=0.3]{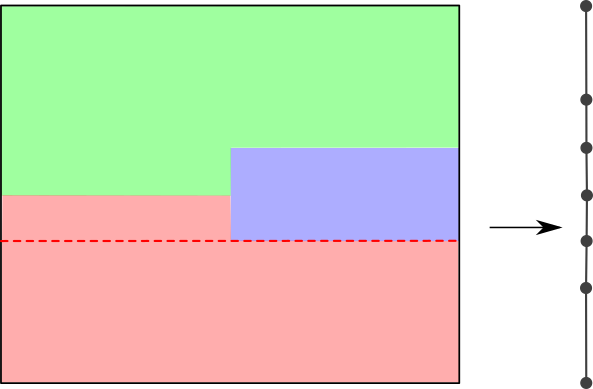}};
\node (mypic) at (-3,-3) {$W_1$};
\node (mypic) at (2,0) {$W_2$};
\node (mypic) at (-3,3) {$W_3$};
\node (mypic) at (5.9,0) {$f$};
\node (mypic) at (8.9,5) {$4$};
\node (mypic) at (8.9,2.5) {$3$};
\node (mypic) at (8.9,1.3) {$2.5$};
\node (mypic) at (8.9,0) {$2$};
\node (mypic) at (8.9,-1.3) {$1.5$};
\node (mypic) at (8.9,-2.4) {$1$};
\node (mypic) at (8.9,-5) {$0$};
\node[text=red] (mypic) at (-11.2,-1.3) {$f^{-1}(\lbrace 1.5 \rbrace)$};
\end{tikzpicture}
\]
\caption{Schematic for a relative trisection obtained from a Morse function}
\label{fig_morse_function_dim4}
\end{figure} 
We now prove the existence of smooth relative multisections in dimension $5$. We will adapt and combine arguments in \cite{castro2016relative} (existence in dimension $4$) and \cite{aribi2023multisections} (existence in dimension 5 for closed smooth manifolds). We will proceed as in the $4$--dimensional case, by finding an adequate decomposition of an intermediate level set, and using this decomposition to build our pieces. Only this time, we need a so-called \emph{special relative quadrisection} of the $4$--dimensional level set. This is the object of the following definition and lemma.  
\begin{Definition}
\label{def_special_relative}
Let $X$ be a compact $4$--manifold with non-empty boundary, together with two closed (possibly disconnected) surfaces $L_2$ and $L_3$ embedded in $Int(X)$, transverse to each other. A \emph{special relative quadrisection} of $X$ with respect to $L_2$ and $L_3$ is a decomposition $X=\cup_{i=1}^4 X_i$, where:
\begin{itemize}
\item $X_i$ is a $4$--dimensional compression body, with $\partial_{-} X_i = X_i \cap \partial X \simeq P \times I$, where $P$ is a given compact surface with no closed component;
\item $X_{i,i+1}$ is a $3$--dimensional compression body, with $\partial_{-} X_{i,i+1} = X_{i,i+1} \cap \partial X \simeq P$;
\item $X_{1234} = \Sigma$, a connected compact surface, with $\partial \Sigma = X_{1234} \cap \partial X \neq \emptyset$;
\item $X_{ij} = \Sigma$ if $j \notin \lbrace i-1, i+1 \rbrace$;
\item $X_1 \cup X_2$ (resp. $X_2 \cup X_3$) is a tubular neighborhood of $L_2$ (resp. $L_3$) with $1$--handles attached.
\end{itemize}
\end{Definition}
See Figure \ref{fig_spec_rel_quad} for a schematic.
\begin{figure}[h!]
\[
\begin{tikzpicture}[scale=0.3]
\node (mypic) at (0,0) {\includegraphics[scale=0.5]{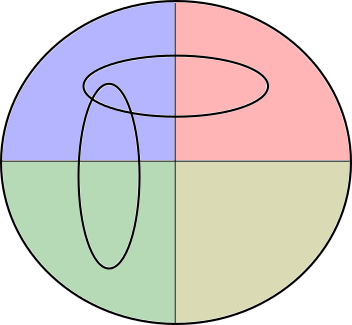}};
\node (mypic) at (-3,-1) {$L_2$};
\node (mypic) at (-1,3.5) {$L_3$};
\node (mypic) at (-8,4) {$X_2$};
\node (mypic) at (8,4) {$X_3$};
\node (mypic) at (-8,-4) {$X_1$};
\node (mypic) at (8,-4) {$X_4$};
\end{tikzpicture}
\]
\caption{Schematic for a special relative quadrisection with respect to $L_2$ and $L_3$}
\label{fig_spec_rel_quad}
\end{figure} 
\begin{Lemma}
\label{lemma_quadrisection}
Let $X$ be a connected, compact, orientable smooth $4$--manifold with non-empty boun\-dary, together with two closed surfaces $L_2$ and $L_3$ embedded in $Int(X)$, transverse to each other. Then, given an open-book decomposition of $\partial X$, there exists a special relative quadrisection of $X$ with respect to $L_2$ and $L_3$ inducing this decomposition.    
\end{Lemma}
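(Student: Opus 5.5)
We adapt the closed-case construction of special quadrisections from \cite{aribi2023multisections} (itself modeled on the existence proof of trisections of $4$--manifolds), replacing the closed level sets by the relative level sets produced by Lemma~\ref{lemma_morse_bdy}. The plan is to build $X_1\cup X_2$ and $X_2\cup X_3$ first as regular neighborhoods of $L_2$ and $L_3$ with $1$--handles, and then to fill in the rest using the relative trisection technology of Theorem~\ref{existence_dim4}.

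\textbf{Step 1: a relative trisection adapted to $L_2$.} Start from a relative trisection $X=Z_1\cup Z_2\cup Z_3$ inducing the given open book, as in Theorem~\ref{existence_dim4}. Isotope $L_2\cup L_3$ so that it meets the spine in a controlled way.
\begin{itemize}
\item Put $L_2$ in general position with respect to a handle decomposition given by the Morse function of Lemma~\ref{lemma_morse_bdy}.
\item Make $L_2\cap L_3$ a finite set of points lying near the central surface.
\item Take a thin regular neighborhood $\nu L_2$. Viewed as a handle decomposition relative to the central region, it consists of $0$--, $1$-- and $2$--handles.
\end{itemize}
Use stabilizations (Remark~\ref{rmk_stab}) and tubing along arcs to absorb the $0$-- and $2$--handles of $\nu L_2$. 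The resulting union of $\nu L_2$ with $1$--handles is then a $4$--dimensional handlebody $X_1\cup X_2$. This handlebody should meet $\partial X$ only in a collar-type piece $P\times I$, which we arrange by attaching to it a collar $\pi^{-1}(\text{arc})\cong P\times I$ of a portion of the open book.

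\textbf{Step 2: treat $L_3$ and split the pieces.} Do the same for $L_3$ to obtain $X_2\cup X_3$, and let $X_4$ be the complement. The choices must make the four intersection conditions hold.
\begin{itemize}
\item $X_2$ is the overlap region around the intersection points $L_2\cap L_3$, together with $1$--handles.
\item $X_{ij}=\Sigma$ holds for non-adjacent indices.
\item The three-dimensional pieces $X_{i,i+1}$ are compression bodies with negative boundary $P$.
\end{itemize}
For the boundary behaviour we mimic Theorem~\ref{existence_dim4}: the boundary pieces $X_i\cap\partial X$ are preimages of four arcs of $S^1$ under the open book, thickened as $P\times I$. In the interior we use relative Heegaard splittings of level $3$--manifolds, as in Figure~\ref{fig_dec_level_set}, to make the $3$--dimensional intersections compression bodies.

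\textbf{Main obstacle.} The hardest step is proving that $X_4$, the complement of the two neighborhoods, is a $4$--dimensional compression body. We would first try the cancellation argument from Theorem~\ref{existence_dim4}: after enough stabilizations (adding $1$--handles to $X_1\cup X_2$ and $X_2\cup X_3$), the dual handle decomposition of $X_4$ relative to $\partial_- X_4$ has only $1$--handles, because the $2$--handles coming from the rest of $X$ are cancelled by $1$--handles placed in the other pieces. Keeping the boundary condition $X_i\cap\partial X\cong P\times I$ throughout is routine, since all modifications are supported in the interior and near $\Sigma$.
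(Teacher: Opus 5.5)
\textbf{There is a genuine gap, and it starts in Step 1.} You assert that $\nu L_2$ with $1$--handles attached can be made into a $4$--dimensional handlebody $X_1\cup X_2$. That is impossible. Attaching $1$--handles to $\nu L_2$ leaves $H_2\cong H_2(L_2)\neq 0$, while a $4$--dimensional handlebody has $H_2=0$, so no amount of stabilizing or tubing ``absorbs'' the $2$--handles of $\nu L_2$. The definition does not ask for this anyway. It requires $X_1$ and $X_2$ \emph{separately} to be compression bodies whose union is $\nu L_2$ with $1$--handles, so each closed component of $L_2$ (and of $L_3$) must be cut between two pieces. Your Step 2 allocation, with $X_2$ a neighbourhood of $L_2\cap L_3$ plus $1$--handles, does not do this for components of $L_2$ missing $L_3$, which the hypotheses allow.

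The missing idea is the paper's disk-cutting step:
\begin{itemize}
\item Remove from $\nu_{L_2\cup L_3}$ neighbourhoods of finitely many normal disks, each meeting $L_2\cup L_3$ once. The remainder $\nu'$ is then a neighbourhood of punctured surfaces, hence a handlebody, and it goes into $X_2$.
\item The removed pieces are $2$--handles attached along $\partial L_2'$ and $\partial L_3'$. They go into $X_1$ and $X_3$, where they cancel $1$--handles because $\partial L_2'$ and $\partial L_3'$ were arranged to be cores of $X_{12}$ and $X_{23}$.
\end{itemize}

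\textbf{The second gap is $X_4$.} A $2$--handle can only be cancelled by a $1$--handle in the same piece. So ``the $2$--handles are cancelled by $1$--handles placed in the other pieces'' does not make $X_4$ a compression body. You need an explicit place where the $2$--handles of $X$ live and are cancelled.

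The paper provides this by sorting handles by index. It uses a self-indexing Morse function $f$ on $X\setminus Int(\nu_{L_2\cup L_3})$ that extends the open-book projection near $\partial X$ and satisfies $f(\partial\nu_{L_2\cup L_3})=0.5$. The construction then runs as follows.
\begin{itemize}
\item The $0$-- and $1$--handles together with $\nu'$ give the handlebody $\nu''=f^{-1}([0,1.5])\cup\nu'$.
\item Each $2$--handle of $f$ is rewritten as a $1$--handle plus two $2$--handles, attached along curves of $M_2$ and $M_3$, each cancelling that $1$--handle. Then $X_2$ is $\nu''$ plus these $1$--handles.
\item One chooses a sutured Heegaard splitting $\partial_+X_2=X_{12}\cup X_{23}$ with $X_{12}$ a neighbourhood of $\partial L_2'\cup M_2\cup(P\times\lbrace a\rbrace)$ with $1$--handles attached, and symmetrically for $X_{23}$.
\item $X_{12}$ and $X_{23}$ are flowed up past level $2$, and the disk $2$--handles are added, giving $X_1$ and $X_3$. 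In these pieces every $2$--handle cancels a $1$--handle.
\item What remains, $X_4$, contains only index-$3$ critical points, so it is $(P\times I)\times I$ with dual $1$--handles.
\end{itemize}

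Your initial relative trisection $Z_1\cup Z_2\cup Z_3$ plays no role in the argument. Likewise, the appeal to stabilizations (a move on an already existing multisection) does not substitute for any of these steps.
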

\begin{proof}
First, we proceed as in the proof of Lemma \ref{lemma_morse_bdy}. We extend the projection of the open-book decomposition of $\partial X$ to a smooth function $F$, from a collar neighborhood of $\partial X$ to $[0,4]$, with no critical points. Then we consider a tubular neighborhood $\nu_{L_2 \cup L_3}$ of $L_2 \cup L_3$ in $X$, and gradually extend $F$ to a self-indexing Morse function $f$ from $X \setminus Int(\nu_{L_2 \cup L_3})$ to $[0,4]$, such that $f(\partial \nu_{L_2 \cup L_3} ) = 0.5$. We assume that $f$ has no critical points of index $0$ and~$4$. We choose a gradient-like vector field $\chi$ on $X$ so that the pair $(f,\chi)$ is Morse--Smale in the interior of $X$. 
\par We pick a collection of $2$--disks, properly embedded in $\nu_{L_2 \cup L_3}$, such that each disk intersects transversally $L_2 \cup L_3$ in a single point, and such that removing a tubular neighborhood of each disk from $\nu_{L_2 \cup L_3}$ results in a collection of (possibly disjoint) \mbox{$4$--dimensional} handlebodies~$\nu'$. Such a collection exists because a tubular neighborhood of a surface with non-empty boundary is a tubular neighborhood of \mbox{$2$--dimensional} $0$-- and $1$--handles, therefore a \mbox{$4$--dimensional} handlebody, as we work with orientable manifolds; and these handlebodies intersect along $4$--balls corresponding to the transverse points of intersection of the surfaces. Note that we can get $\nu_{L_2 \cup L_3}$ back from $\nu'$ by attaching $2$--handles along $L_2'=L_2 \cap \nu'$ and $L_3' = L_3 \cap \nu'$. Because $X$ is connected, so must be $f^{-1}([0,1.5])$, as only $1$--handles are connecting. We can assume that the attaching regions of the $1$--handles are disjoint from $\overline{\nu \setminus \nu'}$. Then the union of $f^{-1}([0,1.5])$ and $\nu'$ is a connected union of handlebodies and $1$--handles, therefore a $4$--dimensional handlebody, that we denote by $\nu''$. By construction, the intersection of this handlebody with $\partial X$ is diffeomorphic to $P \times I$.
\par Now we focus on the descending spheres of the critical points of index $2$. By general position, we consider that they lie in $\partial \nu'' \setminus (\partial L_2' \cup \partial L_3')$. We decompose each of the corresponding $2$--handles as the union of a $1$--handle and two $2$--handles, each one canceling the $1$--handle. Finally we define~$X_2$ to be $\nu''$ with these $1$--handles attached.
\par We give $X_2$ a structure of compression body, by setting $\partial_{-}X_2 = \partial X_2 \cap \partial X$. In $\partial_+ X_2$ lies the link $\partial L_2' \cup \partial L_3'$ and the attaching spheres of the pairs of $2$--handles defined in the previous step, that we denote by $M_2$ and $M_3$. Note that $\partial_+ X_2$ is a compact $3$--manifold diffeomorphic to the connected sum of a collar neighborhood of $f^{-1}(\lbrace 1.5 \rbrace) \cap \partial X$ in $f^{-1}(\lbrace 1.5 \rbrace)$, and copies of $S^1 \times S^2$ corresponding to the $4$--dimensional $1$--handles and handlebodies attached while constructing $X_2$. This manifold admits a sutured decomposition of its boundary: $\partial (\partial_+ X_2) = f^{-1}(\lbrace 1.5 \rbrace) \cap \partial X \simeq (P \times \lbrace a \rbrace \cup \partial P \times [a,b] \cup P \times \lbrace b \rbrace)$, with $[a,b]$ some interval.      
\par We choose a sutured Heegaard splitting: $\partial_+ X_2 = X_{12} \cup X_{23}$, such that $X_{12}$ (resp. $X_{23}$) is a tubular neighborhood of $\partial L_2' \cup M_2 \cup (P \times \lbrace a \rbrace)$ with $1$--handles attached (resp. a tubular neighborhood of $\partial L_3' \cup M_3 \cup (P \times \lbrace b \rbrace)$ with $1$--handles attached). Such a sutured Heegaard splitting can be obtained as follows (see Figure \ref{fig_spec_quad_level_set} for a schematic). Take a tubular neighborhood of $\partial L_2' \cup M_2$ in $\partial_+ X_2$. Connect the components of this neighborhood with $1$--handles. Connect the resulting handlebody and a collar neighborhood $P \times [a,a']$ with another $1$--handle, thus obtaining a handlebody $H_1$. Proceed the same way with $L_3' \cup M_3$ and a collar neighborhood $P \times [b',b]$, denote by $H_2$ the resulting handlebody. Then the closure of $\partial_+ X_2$ minus these two handlebodies is a compact $3$--manifold, with the sutured decomposition of its boundary induced by the sutured decomposition of $\partial (\partial_+ X_2)$. Therefore (see, e.g., \cite{dissler2023relative}), there exists a sutured Heegaard splitting inducing this decomposition. One of the compression bodies of this sutured Heegaard splitting is a collar neighbohood of $\partial \overline{H_1 \setminus P \times [a,a']}$ with $1$--handles attached. Define $X_{12}$ as the union of $H_1$ and this compression body. Define $X_{23}$ symmetrically, as the union of $H_2$ and the other compression body. By construction, $X_{12}$ and $X_{23}$ are compression bodies, with negative boundary $P \times \lbrace a \rbrace$ or~$P \times \lbrace b \rbrace$. The two pieces intersect along their positive boundary, which we denote by $\Sigma$. 
\par Note that the attaching spheres of the $2$--handles defined in the previous steps can all be seen as embedded as the cores of $1$--handles in $X_{12}$ or $X_{23}$. We now define $X_1$ (see Figure~\ref{fig_spec_quad} for a schematic). We push $X_{12}$ upward along $\chi$, above the level set~$f^{-1}(\lbrace 2 \rbrace)$. We obtain a new subset of~$X$, which is a thickening of $X_{12}$ with $2$--handles attached along $M_2$. By construction, each of these $2$--handles is canceled against some $1$--handle in the thickening of $X_{12}$. We define $X_1$ as the union of this new subset with the \mbox{$2$--handles} corresponding to the tubular neighborhoods of the disks removed from $L_2$ when constructing~$X_2$. Again, by construction, these $2$--handles, attached along $L_2'$, are canceled against $1$--handles in the thickening of $X_{12}$. Therefore $X_1$ is a compression body, with negative boundary $\partial_- X_1 = X_1 \cap \partial X \simeq P \times I$. The positive boundary of $X_1$ is the union of $X_{12}$ and $X_{14} = \overline{\partial_+ X_1 \setminus X_{12}}$. We claim that $X_{14}$ is a compression body. We consider~$X_{12}$ as the boundary connected sum of a tubular neighborhood of a link with a compression body (with negative boundary a copy of $P$). In this perspective, $X_{14}$ is obtained from $X_{12}$ by surgery along this link. It is again a boundary connected sum of a collar neighborhood of a copy of $P$ with a tubular neighborhood of a link, with $1$--handles attached. Therefore~$X_{14}$ is a compression body, with same positive boundary as $X_{12}$, and negative boundary diffeomorphic to $P$. Observe that, as required by the definition, $X_{1} \cup X_{2}$ is a tubular neighborhood of $L_{2}$ with $1$--handles attached.
\par We define $X_3$ and $X_{24}$ similarly, by thickening $X_{23}$ and attaching $2$--handles along $\partial L_3' \cup M_3$. When thickening $X_{23}$, we can ensure that $X_1$ and~$X_3$ intersect only along $X_{12} \cap X_{23} = \Sigma$. The remaining piece $X_4 = \overline{X \setminus (X_1 \cup X_2 \cup X_3)}$ is also a compression body, as it is built from $(P \times I) \times I$ by adding $1$--handles (corresponding to the critical points of index $3$, looking upside down), with the required decomposition of its boundary.   
\end{proof}
\begin{figure}[h!]
\[
\begin{tikzpicture}[scale=0.3]
\node (mypic) at (0,0) {\includegraphics[scale=0.5]{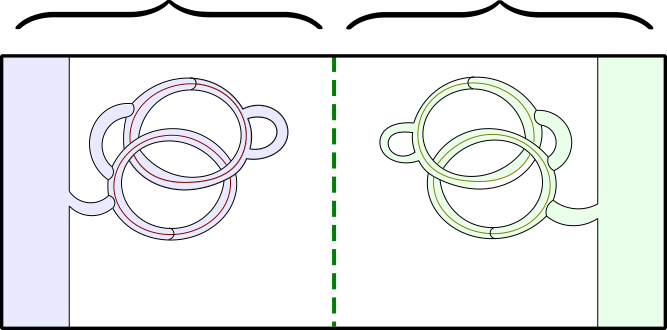}};
\node[text=blue] (mypic) at (-13,-1) {$H_2$};
\node[text=green!40!gray] (mypic) at (13,-1) {$H_1$};
\node[draw=black, fill=white] (mypic) at (0,-3.5) {Heegaard surface};
\node (mypic) at (8.3,4) {$\partial L_2'$};
\node (mypic) at (8,-3.9) {$M_2$};
\node (mypic) at (-9.1,4) {$\partial L_3'$};
\node (mypic) at (-8.1,-3.9) {$M_3$};
\node (mypic) at (7.6,8) {$X_{12}$};
\node (mypic) at (-7.7,8) {$X_{23}$};
\node (mypic) at (14.7,-8) {$a$};
\node (mypic) at (11.7,-7.8) {$a'$};
\node (mypic) at (-11.7,-7.8) {$b'$};
\node (mypic) at (-14.7,-7.8) {$b$};
\end{tikzpicture}
\]
\caption{Schematic for the decomposition of $\partial_+ X_2$}
\label{fig_spec_quad_level_set}
\end{figure} 
\begin{figure}[h!]
\[
\begin{tikzpicture}[scale=0.3]
\node (mypic) at (0,0) {\includegraphics[scale=0.3]{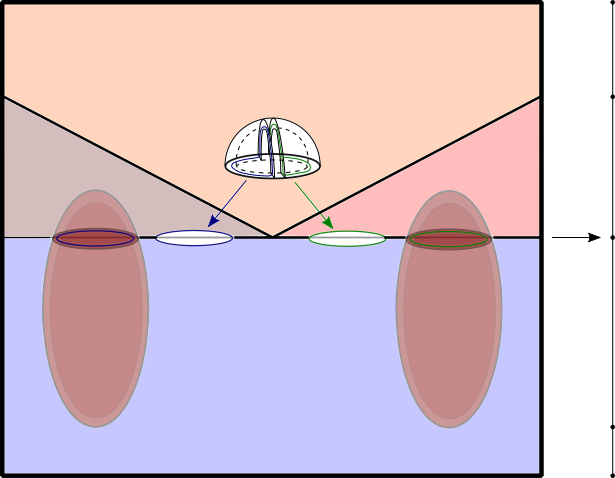}};
\node (mypic) at (5.4,2.5) {$X_1$};
\node (mypic) at (-7,2.5) {$X_3$};
\node (mypic) at (-1,5.5) {$X_4$};
\node (mypic) at (-1,-5.5) {$X_2$};
\node (mypic) at (7,1.1) {$f$};
\node (mypic) at (9.2,6.3) {$4$};
\node (mypic) at (9.2,4) {$3$};
\node (mypic) at (9.2,0) {$1.5$};
\node (mypic) at (9.3,-4.9) {$0.5$};
\node (mypic) at (8.9,-6.2) {$0$};
\node[text=red, font=\footnotesize] (mypic) at (-10.7,0) {$f^{-1}(\lbrace 1.5 \rbrace)$};
\node (mypic) at (-5.5,-4) {$L_3$};
\node (mypic) at (4,-4) {$L_2$};
\node[text=blue, font=\footnotesize] (mypic) at (-5.5,-1) {$\partial L_3 '$};
\node[text=green!60!black, font=\footnotesize] (mypic) at (4,-1) {$\partial L_2 '$};
\node[text=blue, font=\footnotesize] (mypic) at (-2.5,-1) {$M_3$};
\node[text=green!60!black, font=\footnotesize] (mypic) at (1.5,-1) {$M_2$};
\node[font=\footnotesize, fill=white] (mypic) at (-1,4) {$2$--handle};
\end{tikzpicture}
\]
\caption{Schematic for a special relative quadrisection, obtained from a Morse function}
\label{fig_spec_quad}
\end{figure} 
\begin{Theorem}
\label{thm_relative_multisection}
Let $W$ be a compact, smooth, connected, oriented $5$--manifold with non-empty boundary, such that $\partial W$ rela\-tively fibers over $S^2$. Then there exists a relative $4$--section of $W$ inducing this relative fibration on $\partial W$.
\end{Theorem}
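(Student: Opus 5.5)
We follow the strategy of the $4$--dimensional proof of Theorem \ref{existence_dim4}, replacing the sutured Heegaard splitting of the middle level set by a special relative quadrisection of a $4$--dimensional level set (Lemma \ref{lemma_quadrisection}). The plan adapts the closed $5$--dimensional existence argument of \cite{aribi2023multisections} to the relative setting.

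First, by Lemma \ref{lemma_morse_bdy} with $n=4$, we take a function $f: W \rightarrow [0,5]$ that is self-indexing Morse in the interior of $W$. It has no critical points of index $0$ or $5$, no critical points near $\partial W$, and its restriction to $\partial W$ is the projection of the relative fibration onto an arc. We pick a gradient-like vector field $\chi$ such that $(f,\chi)$ is Morse--Smale. The middle level set $X = f^{-1}(\lbrace 5/2 \rbrace)$ is a compact connected $4$--manifold with boundary. By Lemma \ref{lemma_morse_bdy}, $\partial X = f^{-1}(5/2) \cap \partial W \simeq (P \times S^1) \cup (\partial P \times D^2)$, which carries the trivial open-book with page $P$. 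Inside $X$ we consider two surfaces. The first is $L_2$, the union of the ascending spheres of the index--$2$ critical points. The second is $L_3$, the union of the descending spheres of the index--$3$ critical points. Both are closed surfaces (unions of $2$--spheres) in $Int(X)$, and by Morse--Smale genericity they are transverse. Lemma \ref{lemma_quadrisection} then gives a special relative quadrisection $X = \cup_{i=1}^4 X_i$ with respect to $L_2$ and $L_3$, inducing the open-book on $\partial X$. In this decomposition $X_1 \cup X_2$ is a neighbourhood of $L_2$ with $1$--handles, and $X_2 \cup X_3$ is a neighbourhood of $L_3$ with $1$--handles.

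Next, we build the four $5$--dimensional pieces as in the proof of Theorem \ref{existence_dim4}, by flowing pieces of $X$ along $\chi$. The first piece is $W_1 = f^{-1}([0,5/2]) \cup (X_4 \times [5/2, 5/2+\epsilon])$. The second is $W_4 = f^{-1}([5/2,5]) \cup (X_4 \times [5/2-\epsilon,5/2])$, arranged appropriately so that $W_1 \cap W_4 = X_4$. The remaining pieces come from flowing $X_1$ down through the index--$2$ handles and $X_3$ up through the index--$3$ handles. Their descending and ascending disks pass through $X_1 \cup X_2$ and $X_2 \cup X_3$ respectively, which is exactly the reason $L_2$ and $L_3$ were used. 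The $4$--dimensional pieces $W_{ij}$ should match the $X_i$ or their thickenings. The $3$--dimensional pieces should be the $X_{i,i+1}$ or their surgered versions, as for $X_{14}$ in Lemma \ref{lemma_quadrisection}. The central surface is $\Sigma = X_{1234}$. Near $\partial W$, $f$ is a product, so each $W_i \cap \partial W$ becomes $P \times B^2_i$ for a decomposition of $S^2$ into four pieces. This is the standard decomposition, so the induced relative fibration is the given one (Proposition \ref{prop_induced}).

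The main obstacle is verifying that the two middle $5$--dimensional pieces are $P$--based compression bodies. Each consists of a thickened $4$--dimensional compression body together with $2$--handles (resp. dual $2$--handles coming from index--$3$ points). We must show these handles cancel against $1$--handles, just as $W_2$ was handled in the $4$--dimensional proof. The cancellation should follow from the property that $X_1 \cup X_2$ is a neighbourhood of $L_2$ with $1$--handles: each $2$--handle attaching sphere then meets a dual $1$--handle geometrically once. We must also check that the $4$-- and $3$--dimensional intersections have the required $P$--based structure, including their negative boundaries. For the $4$--dimensional intersections we would argue via surgery descriptions, as was done for $X_{14}$.
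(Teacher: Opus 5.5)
Your overall strategy matches the paper's: the Morse function of Lemma~\ref{lemma_morse_bdy}, the middle level $X=f^{-1}(5/2)$, the surfaces $L_2$ (ascending spheres of index--$2$ points) and $L_3$ (descending spheres of index--$3$ points), and a special relative quadrisection of $X$ from Lemma~\ref{lemma_quadrisection}. The gap is in the step that carries the proof: your construction of the four $5$--dimensional pieces fails as written.

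\begin{itemize}
\item Your $W_1\supseteq f^{-1}([0,5/2])$ and $W_4\supseteq f^{-1}([5/2,5])$ already cover $W$. They also overlap in the $5$--dimensional slab $X_4\times[5/2-\epsilon,5/2+\epsilon]$. So there is no room left for $W_2$ and $W_3$, and $W_1\cap W_4\neq X_4$.
\item $f^{-1}([0,5/2])$ contains all the index--$2$ handles, so in general it is not a compression body.
\item Flowing only $X_1$ down (resp.\ $X_3$ up) cannot work either. $L_2$ meets both $X_1$ and $X_2$: in the construction of Lemma~\ref{lemma_quadrisection}, $L_2'$ lies in $X_2$ and the removed disks lie in $X_1$. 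Moreover, $X_2$ must be a common face of the two middle pieces.
\end{itemize}

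The missing idea is the correct placement of the pieces. Two pieces lie below $X$ and two above, and each meets $X$ in the union of two adjacent pieces of the quadrisection.
\begin{itemize}
\item Set $W_1=f^{-1}([0,3/2])\cup(\text{downward flow of } X_3\cup X_4)$. Since $L_2\subset X_1\cup X_2$, this flow misses the index--$2$ handles. Hence $W_1$ is isotopic to $f^{-1}([0,3/2])$, which is a $P$--based compression body.
\item Set $W_2=\overline{f^{-1}([0,5/2])\setminus W_1}$. This is $(X_1\cup X_2)\times I$ with the index--$2$ handles attached upside down, as $3$--handles along $L_2$. Each $3$--handle caps off one thickened sphere. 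Equivalently, seen from below, the $2$--handles cancel the $1$--handles of the thickened circle neighbourhoods, as you suggested. The cancellation idea is fine once the pieces are placed correctly.
\item Symmetrically, $W_3$ is $(X_2\cup X_3)\times I$ with $3$--handles attached along $L_3$.
\item $W_4$ is $f^{-1}([7/2,5])$ together with the upward flow of $X_1\cup X_4$.
\end{itemize}

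With this placement, $W_{24}=X_1$, $W_{23}=X_2$, $W_{13}=X_3$ and $W_{14}=X_4$. The $3$--dimensional pieces and the central surface are exactly those of the quadrisection of $X$; no surgered versions are needed. The surgery argument is needed instead for the two remaining $4$--dimensional pieces. $W_{12}$ is $X_1\cup X_2$ surgered along $L_2$, and $W_{34}$ is $X_2\cup X_3$ surgered along $L_3$. Each surgery replaces every sphere neighbourhood by a circle neighbourhood, so both are compression bodies with negative boundary $P\times I$.
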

\begin{proof}
Let $f$ be a self-indexing Morse function from $W$ to $[0,5]$, as defined in Lemma \ref{lemma_morse_bdy}. We choose a gradient-like vector field $\chi$ on $W$, so that the couple $(f, \chi)$ is Morse--Smale in the interior of $W$.
\par Consider the level set $X=f^{-1}(\lbrace 2.5 \rbrace)$. It is a compact, connected $4$--manifold, whose boun\-dary $\partial X = X \cap \partial W \simeq (P \times S^1) \cup (\partial P \times D^2)$ admits an open-book decomposition induced by the relative fibration on $\partial W$. In the interior of $X$, we have the two links of $2$--spheres, denoted by $L_2^+$ and $L_3^-$, corresponding to the ascending spheres of all critical points of index $2$ and the des\-cending spheres of all critical points of index $3$. As $(f,\chi)$ is Morse--Smale, $L_2^+$ and $L_3^-$ intersect transversally. Therefore, according to Lemma~\ref{lemma_quadrisection}, the level set $X$ admits a special relative quadrisection with respect to $L_2^+$ and $L_3^-$. We denote by $X=W_{24} \cup W_{23} \cup W_{13} \cup W_{14}$ this decomposition. Remember that this ensures that $W_{24} \cup W_{23}$ (resp. $W_{23} \cup W_{13}$) is a tubular neighborhood of $L_{2}^+$ (resp. $L_{3}^-$) in $X$, with $1$--handles attached. 
\par Now we build the relative quadrisection of $W$ as follows (see Figure \ref{fig_morse_5} for a schematic). We push down $W_{13} \cup W_{14}$ along $\chi$ until reaching $f^{-1}(\lbrace 1.5 \rbrace)$. We define $W_1$ as the union of this subset of $f^{-1}([1.5,2.5])$ with $f^{-1}([0,1.5])$. As the $2$--handles of $f$ are attached along $L_{2}^+ \subset W_{24} \cup W_{23}$,~$W_1$ is isotopic to $f^{-1}([0,1.5])$. Therefore it is the connected union of a collar neighborhood of $P \times D^2$ with $1$--handles, i.e. a compression body with negative boundary $W_1 \cap \partial W \simeq P \times D^2$. Then we set $W_2 = \overline{f^{-1}([0,2.5]) \setminus W_1}$. By flowing down $\chi$, we have that $W_2$ is diffeomorphic to $(W_{24} \cup W_{23}) \times I$ with \mbox{$3$--handles} attached along~$L_{2}^+$ (these are exactly the $2$--handles of $f$ seen upside down). As each of these \mbox{$3$--handles} caps off exactly one of the attaching spheres in~$L_{2}^+$,~$W_2$ is a $5$--dimensional compression body, with negative boundary $W_2 \cap \partial W \simeq \big((W_{24} \cup W_{23}) \cap \partial W \big) \times I \simeq P \times D^2$. The positive boundary of $W_2$ is divided between $W_{24} \cup W_{23}$ and a new part $W_{12} = \overline{\partial W_2 \setminus (\partial_- W_2 \cup W_{23} \cup W_{24})}$. This new part is obtained from $W_{24} \cup W_{23}$ by surgery along~$L_{2}^+$. This surgery replaces each tubular neighborhood of a copy of $S^2$ by a tubular neighborhood of a circle. Therefore $W_{12}$ is a $4$--dimensional compression body, with negative boundary $\partial_{-} W_{12}= W_{12}  \cap \partial W \simeq P \times I$. Now the situation is symmetric: we define $W_3$ and~$W_4$ in $f^{-1}([2.5,5])$, attaching $3$--handles along $L_3^-$. The $3$-- and $2$--dimensional intersections res\-pect the definition of a $5$--dimensional relative quadrisection, because they are exactly the $3$-- and \mbox{$2$--dimensional} pieces of the special relative quadrisection of $f^{-1}({\lbrace 2.5 \rbrace})$. 
\end{proof}
\begin{figure}[h!]
\[
\begin{tikzpicture}[scale=0.3]
\node (mypic) at (0,0) {\includegraphics[scale=0.3]{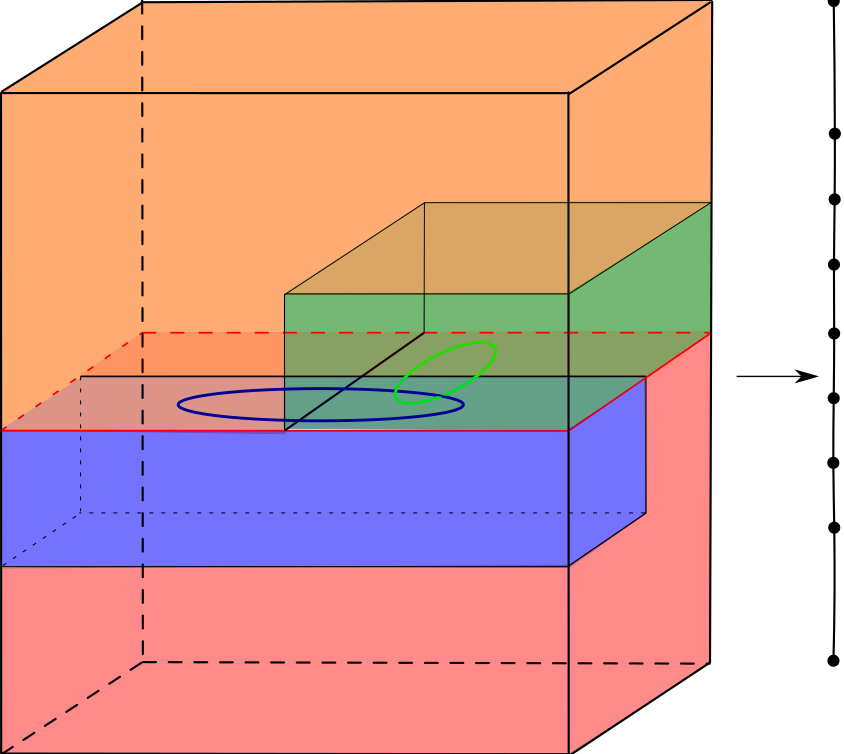}};
\node (mypic) at (5.4,1.2) {$W_3$};
\node (mypic) at (-5.5,-3) {$W_2$};
\node (mypic) at (-3,4.5) {$W_4$};
\node (mypic) at (-3,-6.5) {$W_1$};
\node (mypic) at (9.5,1) {$f$};
\node (mypic) at (12,10.1) {$5$};
\node (mypic) at (12,6.4) {$4$};
\node (mypic) at (12,4.6) {$3.5$};
\node (mypic) at (12,3) {$3$};
\node[text=red] (mypic) at (12,1.2) {$2.5$};
\node (mypic) at (12,-0.7) {$2$};
\node (mypic) at (12,-2.2) {$1.5$};
\node (mypic) at (12,-4) {$1$};
\node (mypic) at (12,-7.5) {$0$};
\node[text=red, font=\footnotesize, draw=red] (mypic) at (-13.2,0) {$f^{-1}(\lbrace 2.5 \rbrace)$};
\node[text=blue, font=\footnotesize] (mypic) at (-7,-0.7) {$L_{2}^+$};
\node[text=green!60!black, font=\footnotesize] (mypic) at (2,-0.5) {$L_{3}^-$};
\end{tikzpicture}
\]
\caption{Schematic for a relative quadrisection obtained from a Morse function}
\label{fig_morse_5}
\end{figure} 
\section{More Examples}
\label{section_more_examples}
\subsection{More examples of relative multisections}
\subsubsection{Relative multisections of disk bundles over spheres}
\label{ex_multi_sn1_times_d2} 
We adapt the construction of relative trisections of $2$--disk bundles over $S^2$, from \cite{castro2018diagrams}, to any $2$--disk bundle $E$ over $S^{n-1}$, with $n \geq 4$. Note that $E$ is necessarily a trivial bundle. Indeed, using a clutching construction, we see that $E$ is determined by an element of $\pi_{n-2}(\mathrm{Diff}_0(D^2))$, where $\mathrm{Diff}_0(D^2)$ is the identity component of the diffeomorphism group of $D^2$. By \textcite{earle1970teichmuller}, this group is homotopy equivalent to $SO(2)$: therefore $\pi_{n-2}(\mathrm{Diff}_0(D^2))$ is trivial for $n \geq 4$. In particular, there is no obstruction for $E$ to admit a relative multisection, as $\partial E = S^1 \times S^{n-1}$.
\par We refer to Figure \ref{fig_snd2} for a schematic of the construction of the multisection (with $n=4$). We denote by $D \simeq D^2$ the fiber of the bundle. We denote by $s$ the stereographic projection from $S^{n-1} \setminus \lbrace (0,...,0,1) \rbrace$ to $\mathbb{R}^{n-1}$, and by $\mathbb{R}^{n-1}= \cup_{i=1}^n A_i$ a standard $n$-decomposition of~$\mathbb{R}^{n-1}$. Let $B_i= s^{-1}(A_i) \cup \lbrace (0,...,0,1) \rbrace$. Then the decomposition $S^{n-1} = \cup_{i=1}^n B_i$ is such that $\cap_{i \in I} B_{i}$ is an $(n- \lvert I \rvert)$--ball for $1 \leq \lvert I \rvert \leq n-1$, and $\cap_{i=1}^n B_i \simeq S^0$. 
\par We consider a collection of $n$ disjoint disks $\lbrace D_1,..., D_n \rbrace$, in the interior of $D$. For $1 \leq i \leq n$, we set $W_i = (B_i \times \overline{D \setminus D_i}) \cup_{B_{i,i+1} \times D_{i+1}}  (B_{i+1} \times D_{i+1})$. Therefore $W_i$ is an $(n+1)$--dimensional compression body of genus $1$, with negative boundary $\partial_{-} W_i = \partial D \times B_i \simeq (S^1 \times I) \times B^{n-2}$.  
\par The computation of the $W_I$'s, for $\lvert I \rvert \leq n$, is straightforward. We find, for $I= \lbrace 1,...,n \rbrace$, a surface constituted of two copies of $D$ with the interior of the $D_i$'s removed, connected by~$n$ cylinders glued along the $\partial D_i$'s (that is, a compact surface of genus $n-1$ with $2$ boundary components). For $\lvert I \rvert =n-1$, we find a $3$--dimensional compression body, with negative boundary a cylinder, and positive boundary $W_{1...n}$. For $2 \leq \lvert I \rvert \leq n-2$, we find an $(n-\lvert I \rvert +2)$--dimensional compression body of genus $\lvert I \rvert +1$, with negative boundary the product of an $(n- \lvert I \rvert -1)$--ball with a cylinder, and its positive boundary multisected according to Remark \ref{rmk_dec_pos_bdy}. Therefore we have produced a relative $n$--section of $E$. 
\begin{figure}[h!]
\[
\begin{tikzpicture}[scale=0.3]
\node (mypic) at (0,0) {\includegraphics[scale=0.5]{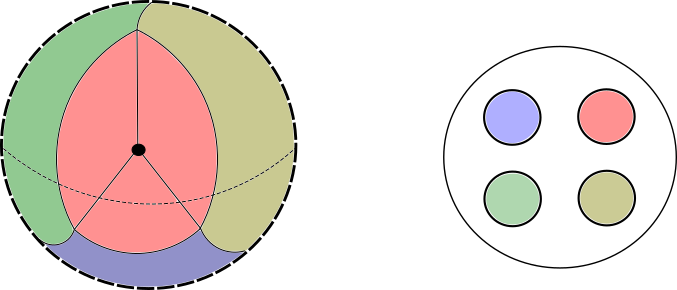}};
\node (mypic) at (-7,0) {$A_1$};
\node (mypic) at (-13,2.6) {$A_3$};
\node (mypic) at (-4,2.6) {$A_4$};
\node (mypic) at (-8,-5.5) {$A_2$};
\node (mypic) at (-2,4) {$\mathbb{R}^3$};
\node (mypic) at (-21,0) {$S^3 \setminus \lbrace (0,0,0,1) \rbrace \xrightarrow{s}$};
\node (mypic) at (13.9,4.1) {$D^2$};
\node (mypic) at (11.9,1.1) {$D_1$};
\node (mypic) at (7.8,1.1) {$D_2$};
\node (mypic) at (12,-2.4) {$D_4$};
\node (mypic) at (7.9,-2.5) {$D_3$};
\node[draw=black] (mypic) at (-23,-3) {$B_i = s^{-1}(A_i) \cup \lbrace (0,0,0,1) \rbrace$};
\node[draw=black] (mypic) at (-23.2,-5.5) {$W_i = (B_i \times \overline{D \setminus D_i}) \cup (B_{i+1} \times D_{i+1})$};
\end{tikzpicture}
\]
\caption{Schematic for a quadrisection of $S^3 \times D^2$}
\label{fig_snd2}
\end{figure} 
By adapting \cite[Example 5.1]{castro2018diagrams} to higher dimensions, we can derive a relative multisection diagram for $E$ from this relative multisection (see Figure \ref{fig_diag_snd2} for examples when $n=4$). 
\begin{figure}[h!]
\[
\begin{tikzpicture}[scale=0.3]
\node (mypic) at (0,0) {\includegraphics[scale=0.47]{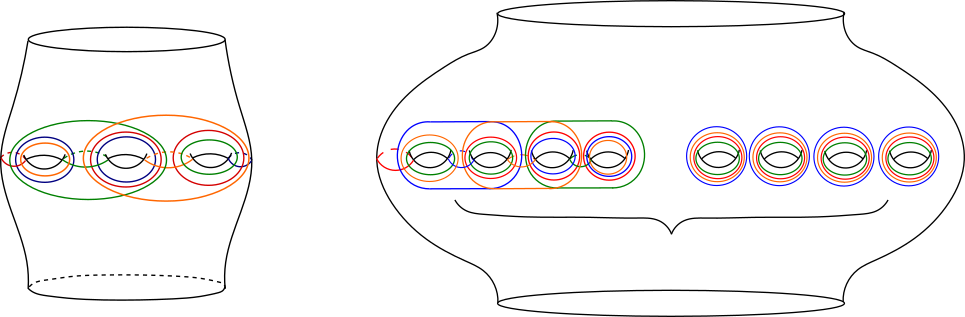}};
\node (a) at (8,-3.6) {\small genus $n-1$};
\node (a) at (7.5,0) {...};
\end{tikzpicture}
\]
\caption{A relative multisection diagram for $S^{3} \times D^2$ (left), for $S^{n-1} \times D^2$ (right, only four sets of curves represented)}
\label{fig_diag_snd2}
\end{figure}
\subsubsection{Relative multisections of compact fiber bundles over the circle}
\label{ex_s1X}
In dimension $4$, trisections of closed fiber bundles over the circle were originally constructed by \textcite{gay2016trisecting}, then by \textcite{koenig2017trisections}. Relative trisections of compact fiber bundles over the circle were constructed by the author in \cite{dissler2023relative}. Multisections of suitable closed fiber bundles over the circle were constructed in any dimension by \textcite{moussard2025multisections}. Our goal is to adapt the methods developed in these references to construct relative multisections of suitable compact fiber bundles over the circle.
\par For $n \geq 4$, let $W$ be an orientable fiber bundle over the circle, with fiber a compact \mbox{$n$--manifold}~$X$. Such a bundle is diffeomorphic to $\big( X \times [0,1] \big) / \big( (x,0) \sim (\phi(x),1) \big)$, where the \emph{monodromy} $\phi$ is a diffeomorphism of~$X$. The idea is to use a relative multisection of the fiber, whose pieces are preserved by the monodromy. The existence of this decomposition is not guaranteed (especially as we cannot even assume that a fiber of dimension greater than $5$ admits a relative multisection). In what follows, we assume that $X$ admits such a decomposition. 
\begin{Lemma}
\label{lemma_bundle_bdy}
Let $n \geq 4$. If there exists a fibration $\#^m S^1 \times S^{n-2} \rightarrow \#^{\ell} S^1 \times S^{n-1} \rightarrow S^1$, then~$m=0$ and~$\ell=1$.
\end{Lemma}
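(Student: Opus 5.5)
We use the long exact sequence in homotopy and the Euler characteristic/homology. Let $F=\#^m S^1\times S^{n-2}$ be the fiber, $E=\#^{\ell}S^1\times S^{n-1}$ the total space, base $S^1$. Since $n\geq 4$, both $F$ and $E$ are connected, and the long exact sequence of the fibration gives
\[
1 \rightarrow \pi_1(F) \rightarrow \pi_1(E) \rightarrow \mathbb{Z} \rightarrow 1,
\]
because $\pi_2(S^1)=0$ and $\pi_0(F)$ is trivial. Here $\pi_1(F)\simeq F_m$ and $\pi_1(E)\simeq F_\ell$ are free groups of ranks $m$ and $\ell$. The plan is to show that a free group of rank $\ell$ containing a normal subgroup isomorphic to a free group of finite rank $m$ with infinite cyclic quotient forces $m=0$, $\ell=1$.

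First, surjectivity onto $\mathbb{Z}$ gives $\ell\geq 1$. Next, I use the classical fact (Schreier's index formula / Karrass--Solitar, or more elementarily the Nielsen--Schreier theorem combined with the fact that a nontrivial finitely generated normal subgroup of a free group has finite index) that a finitely generated nontrivial normal subgroup of a free group has finite index. Since $\pi_1(F)$ is finitely generated (rank $m$) and normal with quotient $\mathbb{Z}$ of infinite index, it must be trivial, so $m=0$. Then $\pi_1(E)\simeq \mathbb{Z}$, so $\ell=1$.

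If I prefer to avoid the group-theoretic citation, I would instead use homology. A Wang sequence for the fibration over $S^1$ with monodromy $h:F\to F$ gives
\[
H_k(F)\xrightarrow{h_*-\mathrm{Id}} H_k(F)\rightarrow H_k(E)\rightarrow H_{k-1}(F)\xrightarrow{h_*-\mathrm{Id}} H_{k-1}(F).
\]
At degree $k=n-1$ we have $H_{n-1}(E)=\mathbb{Z}^\ell$, $H_{n-1}(F)=0$ and $H_{n-2}(F)=\mathbb{Z}^m$ (for $n\ge4$, $n-2\geq 2$ so no overlap with degree $1$). So $\mathbb{Z}^\ell\cong\ker(h_*-\mathrm{Id})$ on $H_{n-2}(F)$, giving $\ell\leq m$. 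At degree $k=1$ we get $0\to \mathrm{coker}(h_*-\mathrm{Id}\mid H_1F)\to H_1(E)\to H_0(F)\to H_0(F)$ where the last map is zero, so $\mathbb{Z}^\ell$ surjects onto $\mathbb{Z}$ with kernel $\mathrm{coker}$. Degree $n-1$ also imposes $H_{n}(E)=\mathbb{Z}$ mapping isomorphically to $\ker$ on $H_{n-1}(F)=0$... this is inconsistent unless handled via the top class: $H_n(E)\to H_{n-1}(F)=0$, and $H_n(F)=0$, giving $H_n(E)=0$, a contradiction? No: $F$ is closed of dimension $n-1$, so $H_{n-1}(F)=\mathbb{Z}$, and I must redo the bookkeeping with $H_{n-1}(F)=\mathbb{Z}$, $H_{n-2}(F)=\mathbb{Z}^m$ (valid when $n-2\neq 1$). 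This bookkeeping is the main obstacle, and it is why I would rely on the group-theoretic argument above as the primary proof: it is short and sufficient.
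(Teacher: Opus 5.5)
Your $\pi_1$ argument is correct, but it takes a different route from the paper. The paper works in degree $n-2$. Since $\pi_{n-1}(S^1)=\pi_{n-2}(S^1)=0$ for $n\geq 4$, the long exact sequence gives $\pi_{n-2}(\#^m S^1\times S^{n-2})\cong\pi_{n-2}(\#^{\ell}S^1\times S^{n-1})$. The right-hand side equals $\pi_{n-2}$ of the $(n+1)$--dimensional genus--$\ell$ handlebody bounded by $\#^{\ell}S^1\times S^{n-1}$, so it vanishes. Because $\{pt\}\times S^{n-2}$ is homotopically essential when $m\geq 1$, this forces $m=0$, and then $\pi_1$ gives $\ell=1$ exactly as in your last step.

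You stay in degree $1$ and put all the content into the theorem that a nontrivial finitely generated normal subgroup of a free group has finite index. That theorem is exactly what you need. Schreier's index formula does not give it, and your ``more elementarily'' clause just restates it.

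The two routes buy different things. The paper's version uses only elementary homotopy facts about these specific manifolds. Yours turns the lemma into a pure statement about free groups, and it would also handle surface fibres ($n=3$), where the degree-$(n-2)$ comparison is unavailable.

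If you want to avoid the citation, use Euler characteristics of groups. The extension splits, so $F_\ell\cong F_m\rtimes\mathbb{Z}$. This is the fundamental group of the mapping torus of a graph map, an aspherical complex with $\chi=0$. Hence $1-\ell=0$, and then $F_m$ is the kernel of a surjection $\mathbb{Z}\to\mathbb{Z}$, so $m=0$.

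Drop the Wang-sequence fallback altogether: homology alone cannot rule out $m\geq 1$. Take a monodromy acting on $H_1(F)\cong\mathbb{Z}^m$ by a matrix $A$ with $A-\mathrm{Id}$ invertible over $\mathbb{Z}$, e.g.\ $m=2$ and $A=\begin{pmatrix}2&1\\1&1\end{pmatrix}$. It yields a total space with the integral homology of $S^1\times S^{n-1}$ but fundamental group $F_2\rtimes\mathbb{Z}$.
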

\begin{proof}
Using the long exact sequence of the fibration, we have that $\pi_{n-2} (\#^m S^1 \times S^{n-2}) \simeq \pi_{n-2} (\#^{\ell} S^1 \times S^{n-1})$. But (see, for instance, \cite[Chapter $6$]{kosinski2013differential}) $\pi_{n-2}(\#^{\ell} S^1 \times S^{n-1}) \simeq \pi_{n-2}(W_{\ell})$, with $W_{\ell}$ a genus--$\ell$ handlebody of dimension $n+1$. As $\pi_{n-2}(W_{\ell})$ is trivial, so is $\pi_{n-2} (\#^m S^1 \times S^{n-2})$, which implies that $m=0$. In this case, the exact sequence gives $\pi_{1}(\#^{\ell} S^1 \times S^{n-1})  \simeq \mathbb{Z}$, therefore~$\ell =1$.  
\end{proof}
Lemma \ref{lemma_bundle_bdy} implies that, if both $X$ and $W$ admit a relative multisection, then the components of $\partial W$ are copies of $S^1 \times S^{n-1}$ and the components of $\partial X$ are spherical.
\begin{Proposition}
\label{prop_bundles1}
Let $n \geq 4$. Let $W$ be a smooth fiber bundle over $S^1$, with monodromy~$\phi$ and fiber a compact $n$--manifold~$X$ with non-empty boundary. Suppose that~$X$ admits a relative $(n-1)$--section $X=\cup_{i=1}^{n-1} X_i$, such that, for every $i$, $\phi(X_i)=X_i$. Then~$W$ admits a relative $n$--section.     
\end{Proposition}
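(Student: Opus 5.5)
The plan is to adapt the construction used for closed bundles over the circle (Moussard) and for relative trisections of bundles (\cite{dissler2023relative}). We split the base circle into $n$ arcs and build each piece from products of the pieces $X_i$ with arcs. Write $S^1=[0,1]/(0\sim1)$ and fix points $0=t_0<t_1<\dots<t_{n-1}<t_n=1$. The intended pieces are as follows. For $1\le i\le n-1$, let $W_i$ be the union of $X_i\times[t_{i-1},t_i]$ with a thin slab $\nu(X_{\{1,\dots,n-1\}\setminus\{i\}})\times[t_i,t_{i+1}]$. The last piece $W_n$ then takes the leftover region $X\times[t_{n-1},1]$ together with slabs around the intersections. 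All of this is glued in $W$ through $(x,0)\sim(\phi(x),1)$, and this gluing is well defined on pieces precisely because $\phi(X_i)=X_i$.

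A cleaner way to organize the construction is to mimic the ``staircase'' structure of Example \ref{ex_multi_sn1_times_d2}. Let $\Delta$ be a standard $n$--decomposition of a neighbourhood of the circle direction. Set $W_i=(X_i\times I_i)\cup(X_{i+1}\times I_{i+1})$, with indices taken mod $n-1$ and one extra piece, so that the following hold:
\begin{itemize}
\item every $n$--fold intersection is $X_{1\dots n-1}\times\{\text{points}\}$, connected by bands $\partial\Sigma\times I$;
\item every $W_I$ is a union of products of compression bodies $X_J$ with intervals, glued along products of lower pieces.
\end{itemize}
For the concrete choice I would follow \cite{dissler2023relative}. There the central surface of the relative $n$--section is obtained from $n-1$ (or $n$) copies of the central surface $\Sigma_X$ of $X$, tubed together along $\partial\Sigma_X\times$ arcs. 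This requires $\phi|_{\Sigma_X}$ to be used once, when closing up.

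The verification then proceeds in the following order.
\begin{enumerate}
\item Check that the top intersection is a connected compact surface whose boundary lies in $\partial W$.
\item For each $|I|\le n-1$, show that $W_I$ is a $P'$--based compression body. Here the common page $P'$ is read off from Lemma \ref{lemma_bundle_bdy}: the components of $\partial X$ are spheres, so the page of $X$ is a union of disks. The boundary components of $\partial W$ are then $S^1\times S^{n-1}$, and by Remark \ref{coro_fibration} the page is forced to be an annulus per component. So $P'$ should be $(\text{page of }X)\times[0,1]$ closed up appropriately, namely an annulus.
\item Show that the local models of Proposition \ref{local_model} hold. This is automatic from products with a standard decomposition.
\end{enumerate}
Each $W_I$ is a product (piece of $X$)$\times$interval, glued to similar products along lower-dimensional pieces, which are themselves compression bodies. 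Gluing two handlebodies along a handlebody face on their boundary gives a handlebody: it is a boundary connected sum up to $1$--handles. The negative boundary is the product of $\partial_-X_J\simeq P\times B$ with an interval.

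The main obstacle is the piece or pieces that wrap around the circle. Here $\phi$ enters, and we must check that $X_J\times[t,1]\cup_\phi X_J\times[0,t']$ is still a compression body rather than a mapping torus. I would avoid this by arranging that no single piece contains a full circle factor. Every $W_I$ should project to a proper subarc of $S^1$, except through thin slabs around intersections, exactly as in the disk-bundle example, where $S^0=\cap B_i$ gives two copies of the fiber's central surface. If some piece unavoidably contains $X_J\times S^1$ twisted by $\phi$, I would instead restrict to a slab where $J$ is the full intersection, namely $\Sigma_X\times S^1$. There I would use that $\phi|_{\Sigma_X}$ is isotopic to a map preserving a cut system, and accept the extra handles.

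Finally, I would check that the induced structure on $\partial W$ is the product relative fibration, matching Theorem \ref{prop_rel_trivial}.
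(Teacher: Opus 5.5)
Your outline has the right shape: products of the $X_i$ with arcs of the base, no piece containing a full circle factor, a central surface made of copies of $\Sigma$, and an annular page. But it never specifies a decomposition that works, and the principle you use to show the pieces are compression bodies is false.

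\begin{itemize}
\item \textbf{The explicit decomposition fails.} It puts $X\times[t_{n-1},1]$ inside $W_n$, which is not a handlebody for a general fiber. It also leaves $(X\setminus X_i)\times[t_{i-1},t_i]$ uncovered outside the thin slabs.
\item \textbf{The ``cleaner'' version is unspecified.} The intervals $I_i$ and the extra piece are left open.
\item \textbf{The verification principle is false.} You rely on ``gluing two handlebodies along a handlebody face of their boundaries gives a handlebody''. Two $4$--balls glued along a solid torus in their boundaries give, by Mayer--Vietoris, a manifold with $H_2\cong\mathbb{Z}$. More generally $H_2\neq 0$ whenever $H_1(X_{i,i+1})\to H_1(X_i)\oplus H_1(X_{i+1})$ is not injective, for instance for the genus--$1$ relative trisection of $(\mathbb{CP}^2)^o$.
\item \textbf{Neither choice of intervals rescues it.} If $I_i$ and $I_{i+1}$ overlap, then $X_i\times I_i$ and $X_{i+1}\times I_{i+1}$ are glued along the positive-genus face $X_{i,i+1}\times(I_i\cap I_{i+1})$, which is exactly this bad situation. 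If they only abut, they meet along the codimension--$2$ set $X_{i,i+1}\times\{t\}$, and the union is not a manifold.
\item \textbf{The fallback fails.} $\Sigma_X\times S^1$ is not a compression body.
\end{itemize}

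The missing ingredient is the paper's nested staircase together with its tubes.
\begin{itemize}
\item \textbf{Staircase.} Take $2(n-1)$ cut points $t_1<\dots<t_{2(n-1)}$. Split each row $X_i\times S^1$ into the middle arc $X_i\times[t_i,t_{2n-1-i}]$, assigned to $W_i$, and the complementary arc through $0\sim 1$, assigned to $W_{i+1}$. This is where $\phi(X_i)=X_i$ is used. That arc is still an interval, so no mapping torus appears.
\item \textbf{Tubes.} The nesting makes the two row-pieces of each $W_j$ disjoint in the $t$--direction. They are connected only by thin tubes $B_k\times[t_{k-1},t_k]$, where $B_k$ is a small ball centred on a point of $\partial\Sigma$ and is removed from all other pieces.
\item \textbf{Consequences.} Every $W_I$ is a disjoint union of copies of $X_J\times I$, $X_J$ or $\Sigma\times I$ joined by $1$--handles, hence a compression body. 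The central surface is $2(n-1)$ copies of $\Sigma$ (not $n-1$ or $n$) joined by bands. The page consists of disks joined by bands, i.e.\ annuli.
\end{itemize}
Finally, your step ``spherical boundary forces disk pages'' holds only for $n\geq 5$. For $n=4$, $S^3$ has open books with other pages, and the paper has to choose disk pages on $\partial X$ explicitly.
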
   
\begin{proof}
Denote by $b$ the number of components of $\partial X$, which are all spheres, by Lemma \ref{lemma_bundle_bdy}. If~$n \geq 5$, then the page of a relative fibration of~$\partial X$ is necessarily a disjoint union of $b$ disks (see Remark~\ref{coro_fibration}). If $n=4$, we choose this page to be a disjoint union of~$b$ disks: we decompose $\partial X$ using $b$ copies of the open-book decomposition of $S^3$ with disk pages, then extend this decomposition to a relative trisection of $X$ using Theorem \ref{existence_dim4}. We denote by~$\Sigma$ the central surface of the relative multisection of~$X$, with genus~$g$ and necessarily $b$ boundary components. We decompose~$W$ according to the table on Figure \ref{fig_tables1X}. The horizontal part is the segment $[0,1]$, identified at $0 \simeq 1$ to produce~$S^1$. The vertical part represents~$X$, divided between the $X_i$'s. The top row corresponds to $X_1 \times S^1$, the second row to $X_2 \times S^1$, and so on until the bottom row, which corresponds to $X_{n-1} \times S^1$. Each row is divided between subsets of the $W_j$'s.
\par To fix ideas, consider that $n=4$. Let $0<t_1<...<t_k<...<t_{6}<1$ be such that each $t_k$ is the intersection of $[0,1]$ with one of the vertical segments in the table. Choose~$6b$ disjoint points in~$\Sigma \cap \partial X$ ($6$ in each of the $b$ components). By considering a small enough neighborhood of each of these points, we obtain a collection of $6$ sets of $b$ disjoint balls, denoted by $\lbrace B_k \rbrace_{1\leq k \leq 6}$, such that each ball intersects transversally each $X_I$ into an $(n- \lvert I \rvert +1)$--ball, and intersects $X_I \cap \partial X$ into an $(n- \lvert I \rvert)$--ball. Then we use these balls to connect the different pieces of each $W_j$. For instance, $W_1$ is equal to the union $ ([t_{1},t_{6}] \times X_{1}) \cup \big( ([t_{6},1] \times B_1)\cup_{\phi}([0,t_{1}] \times B_1) \big)$, minus the sets $Int({B}_{k}) \times [t_{k-1},t_k]$ for $k \neq 1$, and $W_2$ is equal to $ ([t_{1},t_{2}] \times B_{2}) \cup ([t_{2},t_{5}] \times X_{2}) \cup ([t_{5},t_{6}] \times B_{6}) \cup \big( ([t_{6},1] \times X_1)\cup_{\phi}([0,t_{1}] \times X_1) \big)$, minus the sets $Int({B}_{k}) \times [t_{k-1},t_k]$ for $k \neq 2,6$, and $(Int({B}_{1}) \times [t_{6},1]) \cup_{\phi} (Int({B}_{1}) \times [0,t_1])$. Note that, for every $I$ and~$k$, $X_I \setminus Int({B}_k)$ is isotopic to $X_I$. 
\par We claim that $W= \cup_{i=1}^4 W_i$ is a relative multisection of $W$. Indeed, $W_{1234}$ is a properly embedded surface, equivalent to $6$ copies of $\Sigma$ (one at each $t_k$), with bands joining $\Sigma \times \lbrace t_{k-1} \rbrace$ to $\Sigma \times \lbrace t_{k} \rbrace$. A tridimensional intersection $W_{ijk}$ consists in copies of $\Sigma \times I$ and copies of~$X_{12}$, $X_{13}$ or $X_{23}$, connected by tridimensional tubes. It is a tridimensional compression body, with positive boundary $W_{1234}$, and negative boundary $W_{ijk} \cap \partial W$ diffeomorphic to disjoint sets of disks connected by bands, i.e. to a disconnected union of annuli, which we denote by $P$. A \mbox{$4$--dimensional} intersection $W_{ij}$ consists in copies of $X_{12} \times I$, $X_{13} \times I$ or $X_{23} \times I$, and copies of $X_1$, $X_2$, or $X_3$, all connected by tubes. It is a $4$--dimensional compression body with negative boundary diffeomorphic (modulo corners) to $P \times I$. Finally each $W_i$ consists in copies of $X_1 \times I$, $X_2 \times I$ or $X_3 \times I$, connected by tubes. It is a $5$--dimensional compression body, with negative boundary diffeomorphic (modulo corners) to $P \times B^2$. We have thus produced a relative quadrisection of $W$.
\par The $(n+1)$--dimensional case is just a straightforward generalization. We obtain a relative $n$--section of $W$, with central surface diffeomorphic to $2(n-1)$ copies of $\Sigma = X_{1...n-1}$ connected by bands, and page diffeomorphic to a disjoint set of annuli. Note that the page of any relative multisection of $W$ is necessarily a disconnected union of annuli, because each component of $\partial W$ is diffeomorphic to $S^1 \times S^n$, which admits a unique relative fibration (see Remark \ref{coro_fibration}).
\end{proof}
\begin{figure}[h!]
\[
\begin{tikzpicture}[scale=0.3]
\node (mypic) at (0,0) {\includegraphics[scale=0.4]{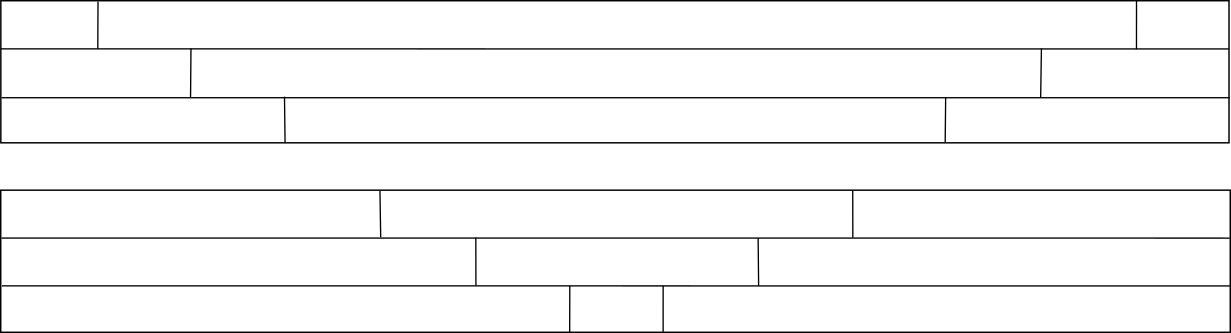}};
\node (mypic) at (-20,5) {$W_2$};
\node (mypic) at (0,5) {$W_1$};
\node (mypic) at (20,5) {$W_2$};
\node (mypic) at (23.5,5) {\small $X_1$};

\node (mypic) at (-18,3.2) {$W_3$};
\node (mypic) at (0,3.2) {$W_2$};
\node (mypic) at (23.5,3.2) {\small $X_2$};
\node (mypic) at (18,3.2) {$W_3$};

\node (mypic) at (-16,1.7) {$W_4$};
\node (mypic) at (16,1.7) {$W_4$};
\node (mypic) at (23.5,1.7) {\small $X_3$};
\node (mypic) at (0,1.7) {$W_3$};

\node (mypic) at (15,-1.7) {$W_{n-2}$};
\node (mypic) at (-15,-1.7) {$W_{n-2}$};
\node (mypic) at (23.7,-1.7) {\small $X_{n-3}$};
\node (mypic) at (0,-1.7) {$W_{n-3}$};

\node (mypic) at (-13,-3.3) {$W_{n-1}$};
\node (mypic) at (23.7,-3.3) {\small $X_{n-2}$};
\node (mypic) at (13,-3.3) {$W_{n-1}$};
\node (mypic) at (0,-3.3) {$W_{n-2}$};

\node (mypic) at (0,-5) {$W_{n-1}$};
\node (mypic) at (-12,-5) {$W_{n}$};
\node (mypic) at (23.7,-5) {\small $X_{n-1}$};
\node (mypic) at (12,-5) {$W_{n}$};

\node (mypic) at (-22,-7) {$0$};
\node (mypic) at (22,-7) {$1$};
\node (mypic) at (0,0.3) {\small $\vdots$};
\end{tikzpicture}
\]
\caption{Schematic for a relative multisection of a suitable $(n+1)$--dimensional fiber bundle over $S^1$}
\label{fig_tables1X}
\end{figure} 
An associated relative multisection diagram for $W$ can be obtained from the corresponding relative multisection diagram for $X$. The process described below is a direct adaptation of the $4$--dimensional case, described in \cite[Section 5]{dissler2023relative}.
\par When constructing the tridimensional intersections from the central surface, a specific compression body appears: $C_{\Sigma} \simeq \Sigma \times I$. Its positive boundary is the union of $\Sigma \times \lbrace 0 \rbrace$ and~$\Sigma \times \lbrace 1 \rbrace$, with matching boundary components connected two by two by bands. Its negative boundary is a disjoint union of $b$ disks, each one capping off a boundary component of the positive boundary. The construction of a cut system for this compression body is detailed in \cite[Subsection 5.1]{dissler2023relative}, we recall it briefly. Pick a boundary component $\partial_1 \Sigma$ of~$\partial \Sigma$. Choose $2g$ non isotopic and properly embedded arcs on $\Sigma \times \lbrace 0 \rbrace$, with extremities on $\partial_1 \Sigma \times \lbrace 0 \rbrace$, that cut the surface into a disk with~$b-1$ holes. By gluing each arc $a$ with its symmetric on $\Sigma \times \lbrace 1 \rbrace$, we obtain $2g$ non isotopic curves on~$\partial_+ C_{\Sigma}$ (top curves on Figure~\ref{fig_cut_system_csigma}). Choose $(b-1)$ arcs connecting $\partial_1 \Sigma$ to each of the remaining boundary components, then draw the $(b-1)$ curves obtained as the doubles of these arcs. Finally, draw the last $(b-1)$ curves, each one parallel to a boundary component but $\partial_1 \Sigma$. We define a cut system for the quotient compression body $C _{\Sigma,\phi}=(\Sigma \times [t_{2(n-1)},0]) \cup_{\phi} (\Sigma \times [0,t_1])$ (with~$\partial_+ C_{\Sigma, \phi}$ the union of $\Sigma \times \lbrace t_{2(n-1)} \rbrace$ and $\Sigma \times \lbrace t_1 \rbrace$, with matching boundary components connected two by two by bands) in the same fashion: only this time, each arc on $\Sigma \times \lbrace t_{2(n-1)} \rbrace$ is connected with its image by $\phi$ on $\Sigma \times \lbrace t_{1} \rbrace$. 
\par Denote by $\lbrace \Sigma, (\alpha^i)_{1 \leq i \leq n-1} \rbrace$ the relative multisection diagram for $X$. To construct our relative multisection diagram $\lbrace \Sigma_W, (\beta^i)_{1 \leq i \leq n} \rbrace$ associated to the relative multisection of~$W$, we choose an embedding of $\Sigma_W$ as $2(n-1)$ copies of $\Sigma$ with alternating orientations, denoted by $(\Sigma^i)_{1 \leq i \leq 2(n-1)}$, such that each boundary component of $\Sigma^i$ is connected with the matching boundary component of $\Sigma^{i+1}$ by a band. We consider that the monodromy occurs between $\Sigma^{2(n+1)}$ and $\Sigma^1$: therefore each boundary component of $\Sigma^{2(n+1)}$ must be connected with its image by the monodromy on~$\Sigma^1$. A tridimensional piece of the relative multisection of $W$ is a union of copies of tridimensional intersections of the relative multisection of~$X$, and one copy of~$C_{\Sigma}$ (in the case of $W_{1...n-1}$), or one copy of~$C_{\Sigma,\phi}$ (in the case of $W_{2...n}$), or two copies of $C_{\Sigma}$ (all other cases), connected by tubes. Its negative boundary is a union of annuli corresponding to the negative boundaries of these compression bodies, connected by bands. We obtain the $\beta^1$ curves as follows.
\begin{enumerate}
\item For $j \neq 1,2(n-1)$, we draw on $\Sigma^j$ a copy of $\alpha^1$, with the convention that if $j$ is even, then the copy must be the image of $\alpha^1$ by reflection.  
\item We draw on the union of $\Sigma^{2(n-1)}$, $\Sigma^{1}$ and their connecting bands the cut system for~$C_{\Sigma, \phi}$. 
\end{enumerate}
The other sets of curves are obtained by adapting these two steps to each tridimensional piece. 
\begin{figure}[h!]
\[
\begin{tikzpicture}[scale=0.3]
\node (mypic) at (0,0) {\includegraphics[scale=0.4]{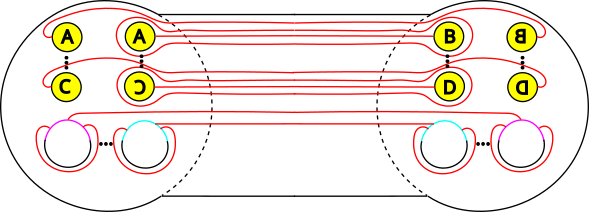}};
\end{tikzpicture}
\]
\caption{A cut system for $C_{\Sigma}$ \\ \footnotesize{The pink and blue arcs on the boundary components are identified by color to form the band connecting these components}}
\label{fig_cut_system_csigma}
\end{figure}
An example of such a diagram is featured on Figure \ref{fig_gen_diag_s1xo_arced}, when the fiber~$X^o$ is a closed $5$--manifold $X$ minus the interior of a ball. Remember that (Example \ref{ex_diag_xn_minus_ball}), if $X$ admits a multisection with associated diagram $\lbrace \Sigma , (\alpha^i)_{1 \leq i \leq n-1} \rbrace $, then~$X^o$ admits a relative multisection with associated diagram $\lbrace \Sigma \setminus Int(D), (\alpha^i)_{1 \leq i \leq n-1} \rbrace $, where $D$ is a disk disjoint from the $(\alpha^i)_{1 \leq i \leq n-1}$ curves. 
\begin{figure}[h!]
\[
\begin{tikzpicture}[scale=0.6]
\node (mypic) at (0,0) {\includegraphics[scale=0.6]{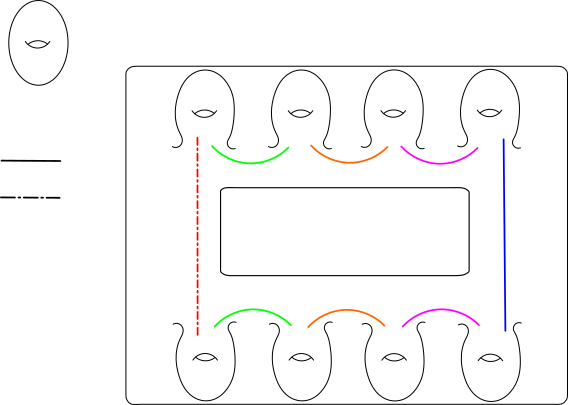}};
\node[minimum size] (mypic) at (-6.1,0.7) {\tiny Cut syst. $C_{\Sigma_{g,1}}$};
\node[minimum size] (mypic) at (-6.1,-0.3) {\tiny Cut syst. $C_{\Sigma_{g,1},\phi}$};
\node[draw, fill=white] (mypic) at (-9.5,3.5) {\tiny $(\Sigma_g, (\alpha^i)_{1 \leq i \leq 4})$};

\node[text=red] (mypic) at (5.8,2.8) {\tiny $\alpha^1$};
\node[text=red] (mypic) at (3.3,2.8) {\tiny $\alpha^1$};
\node[text=red] (mypic) at (0.8,2.8) {\tiny $\alpha^1$};
\node[text=red] (mypic) at (5.8,-4.5) {\tiny $\alpha^1$};
\node[text=red] (mypic) at (3.3,-4.5) {\tiny $\alpha^1$};
\node[text=red] (mypic) at (0.8,-4.5) {\tiny $\alpha^1$};

\node[text=blue] (mypic) at (-2.3,2.8) {\tiny $\alpha^4$};
\node[text=blue] (mypic) at (2.6,2.8) {\tiny $\alpha^4$};
\node[text=blue] (mypic) at (0.2,2.8) {\tiny $\alpha^4$};
\node[text=blue] (mypic) at (-2.3,-4.5) {\tiny $\alpha^4$};
\node[text=blue] (mypic) at (2.6,-4.5) {\tiny $\alpha^4$};
\node[text=blue] (mypic) at (0.2,-4.5) {\tiny $\alpha^4$};

\node[text=green] (mypic) at (5.5,3.2) {\tiny $\alpha^2$};
\node[text=green] (mypic) at (3,3.2) {\tiny $\alpha^2$};
\node[text=green] (mypic) at (5.5,-4.9) {\tiny $\alpha^2$};
\node[text=green] (mypic) at (3,-4.9) {\tiny $\alpha^2$};

\node[text=orange] (mypic) at (-1.7,2.8) {\tiny $\alpha^2$};
\node[text=orange] (mypic) at (-1.7,-4.5) {\tiny $\alpha^2$};
\node[text=orange] (mypic) at (5.2,2.8) {\tiny $\alpha^3$};
\node[text=orange] (mypic) at (5.2,-4.5) {\tiny $\alpha^3$};

\node[text=magenta] (mypic) at (-2,3.2) {\tiny $\alpha^3$};
\node[text=magenta] (mypic) at (0.5,3.2) {\tiny $\alpha^3$};
\node[text=magenta] (mypic) at (-2,-4.9) {\tiny $\alpha^3$};
\node[text=magenta] (mypic) at (0.5,-4.9) {\tiny $\alpha^3$};

\end{tikzpicture}
\]
\caption{Relative $5$--section diagram for $S^1 \times X^o$, obtained from the genus--$g$ $4$--section diagram for $X$ in the top-left corner}
\label{fig_gen_diag_s1xo_arced}
\end{figure}
\begin{Remark}
\cite{moussard2025multisections} also considers the case where the monodromy permutes the pieces of the multisection of the fiber. This can be adapted to relative multisections of compact fiber bundles.
\end{Remark}
\subsection{More applications of the Gluing Theorem}
\label{sub_glue_ex}
\subsubsection{Multisections of $S^{n-1} \times S^2$}
Trivially gluing two copies of relatively $n$--sected $S^{n-1} \times D^2$ (see Example \ref{ex_multi_sn1_times_d2}) produces an \mbox{$n$--section} of $S^{n-1} \times S^2$. An example of an associated multisection diagram for $S^3 \times S^2$ is featured on Figure~\ref{fig_s3s2}. The genus of the multisection is $2n+1$, but the diagram can easily be destabilized $n+1$ times to recover the genus--$n$ multisection diagram by Moussard \cite{moussard2025multisections}. However, this diagrammatic operation is only well-defined for $n \leq 5$ in the smooth category, as shows the following discussion.
\par As in dimension $3$ and $4$, an $n$--section diagram $\mathcal{D}= \lbrace \Sigma, (\alpha^i)_{1 \leq i \leq n} \rbrace $ for a PL--manifold~$W$ can be destabilized for any $n$: if $\mathcal{D}$ contains two groups of $k>0$ and $n-k$ parallel curves, with one curve in each $\alpha^i$, such that a curve of the first group intersects each curve of the second group in exactly one point, then the associated multisection is the result of a stabilization (\cite[Proposition~2.2]{moussard2025multisections}). This is due to the fact that such a diagram can be modified by isotopy to a connected sum of two multisection diagrams $\mathcal{D}_1 \# \mathcal{D}_2$, with~$\mathcal{D}_1$ a genus--$1$ diagram of Example~\ref{ex_multi_diagram_sn}. As a multisection diagram determines a PL multisected manifold up to PL-homeomorphism (Theorem \ref{thm_diagram}), $\mathcal{D}_1$ uniquely determines~$S^{n+1}$. Thus,~$\mathcal{D}_2$ is a multisection diagram for $W$, obtained by destabilizing~$\mathcal{D}$. Practically, if the group of~$k$ curves is disjoint from all the curves of the multisection diagram but the curves of the second group, we destabilize $\mathcal{D}$ by compressing $\Sigma$ along one of the curves of the first group, and deleting all the curves of the second group. In the smooth category, however, this is only true for $n \leq 5$: otherwise a diagram of Example \ref{ex_multi_diagram_sn} cannot be uniquely associated to~$S^{n+1}$.
\begin{figure}[h!]
\[
\begin{tikzpicture}[scale=0.2]
\node (mypic) at (0,0) {\includegraphics[scale=0.3]{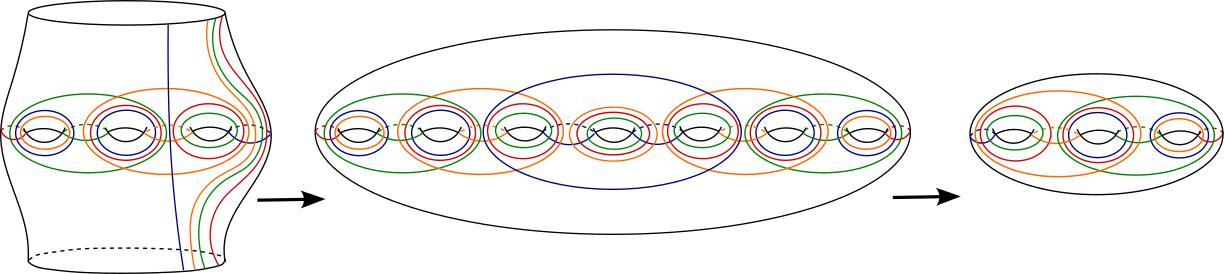}};
\end{tikzpicture}
\]
\caption{Multisection diagram for $S^3 \times S^2$ obtained from the arced diagram for $S^3 \times D^2$ on the left, then destabilized $4$ times}
\label{fig_s3s2}
\end{figure}
\subsubsection{Multisections of $1$--spun manifolds} 
\label{ex_1spun}
In \cite{dissler2024multisections}, the author defines the $m$--spun of a closed $n$--manifold $X$: $\mathcal{S}_m(X) = (X^o \times S^m) \cup_{S^{n-1} \times S^m} (S^{n-1} \times B^{m+1})$, where $X^o$ stands for $X$ minus the interior of a ball. Trisections of $1$--spun \mbox{$3$--manifolds} were originally constructed by \textcite{meier2017trisections}, then \cite{dissler2024multisections} provided multisections for \mbox{$m$--spun} $3$--manifolds, with $m \geq 1$. We focus here on $1$--spun $n$--manifolds, with~$n \geq 3$.
\begin{Proposition}
If $X$ admits a genus--$g$ $(n-1)$--section, then $\mathcal{S}_1(X)$ admits a multisection of genus $(n-1)(2g+1)+1$. Moreover, this multisection can be destabilized once, for $n \leq 5$ in the smooth category, and for any $n$ in the PL-category.
\end{Proposition}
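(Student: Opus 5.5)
The plan is to realize $\mathcal{S}_1(X)$ as a gluing of two relatively multisected manifolds and apply Theorem \ref{thm:gluing}. By definition,
\[
\mathcal{S}_1(X) = (X^o \times S^1) \cup_{S^{n-1} \times S^1} (S^{n-1} \times D^2).
\]
The $n$--manifold $X^o$ inherits a relative $(n-1)$--section from the genus--$g$ multisection of $X$ (Example \ref{ex_xn_minus_ball}). Its central surface is $\Sigma_{g,1}$ and its page is a disk, and every piece is preserved by the identity monodromy.

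Proposition \ref{prop_bundles1} then gives a relative $n$--section of $X^o \times S^1$. Its central surface consists of $2(n-1)$ copies of $\Sigma_{g,1}$ joined in a cycle by bands. Since $b=1$, this surface has genus $2(n-1)g + 1$ and two boundary components. Its page is a single annulus, as forced on $S^1 \times S^{n-1}$ by Remark \ref{coro_fibration}.

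On the other side, $S^{n-1} \times D^2$ carries the relative $n$--section of Subsection \ref{ex_multi_sn1_times_d2}. Its central surface has genus $n-1$ with two boundary components, and its page is also an annulus.

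Both boundaries are $S^{n-1}\times S^1$ with the unique relative fibration having annular page. I then need an orientation-reversing diffeomorphism $f$ that preserves fibers and sends $\partial W_i$ to $\partial W_i'$ for every $i$. The relative fibrations match by Theorem \ref{prop_rel_trivial}. The delicate point is matching the pieces: on each side the decomposition of $\partial$ is the pullback of a decomposition of $S^{n-2}$ into $n$ balls, and I must check these agree. This amounts to reindexing the $W_i'$ of $S^{n-1}\times D^2$ and composing with a fiber-preserving isotopy of $S^{n-2}$ that carries one standard $n$--decomposition to the other. I must also confirm that the gluing map identifies $S^{n-1}\times S^1$ as in the spun construction. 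Any fiber-preserving self-map of the trivial fibration is a clutching map into $\mathrm{Diff}(P,\partial)$, which is trivial up to isotopy for $n\geq 4$ by Theorem \ref{Earle}. It therefore only remains to ensure the $S^1$ direction corresponds to the core of the annulus, which holds for both models. I expect this compatibility check to be the main obstacle, especially for $n=3$, where the induced open books must be matched by hand.

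Given the gluing, Remark \ref{rmk_bdy_cpnts} computes the genus as $g_1+g_2+b-1$ with $b=2$:
\[
\bigl(2(n-1)g+1\bigr) + (n-1) + 1 = (n-1)(2g+1)+1.
\]
For the destabilization, I would build the glued diagram with Proposition \ref{prop_glue_diag} and the arc algorithm. I would then look for two groups of $k$ and $n-k$ parallel curves, one curve in each family, such that each curve of the first group meets each curve of the second group exactly once. The natural candidates are the curves parallel to a boundary component coming from the cut system of $C_{\Sigma}$, paired with the doubled arcs $a^i\cup f_i(a^i)$ crossing the gluing circle. Exhibiting such a configuration shows that the multisection is a stabilization. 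The argument preceding this proposition then lets us destabilize once, via \cite[Proposition~2.2]{moussard2025multisections}. This is valid in the PL category for every $n$, and in the smooth category for $n\le5$, where the genus--$1$ diagram of Example \ref{ex_multi_diagram_sn} uniquely determines the sphere by Theorem \ref{thm_diagram}.
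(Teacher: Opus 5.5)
Your strategy is the paper's: relatively multisect $X^o$ via Example \ref{ex_xn_minus_ball}, pass to $X^o\times S^1$ with Proposition \ref{prop_bundles1}, glue trivially to the relative $n$--section of $S^{n-1}\times D^2$, count the genus with Remark \ref{rmk_bdy_cpnts}, and destabilize with the diagrammatic criterion. The genus count, however, is wrong in two places.

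\textbf{The central surface of $X^o\times S^1$.} It has genus $2(n-1)g$, not $2(n-1)g+1$. The $2(n-1)$ copies of $\Sigma_{g,1}$ give $\chi=2(n-1)(1-2g)$. The $2(n-1)$ bands forming the cycle lower this by $2(n-1)$, so $\chi=-4(n-1)g$. The surface has two boundary circles, so it is $\Sigma_{2(n-1)g,2}$, as the paper states. The last band, which closes the cycle, splits the boundary circle in two; it does not add a handle. As a sanity check, for $g=0$ disks joined cyclically by bands form an annulus, not a genus--$1$ surface.

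\textbf{The displayed identity.} It is false: its left-hand side equals $2(n-1)g+n+1=(n-1)(2g+1)+2$. So the arithmetic slip hides the first error rather than cancelling it, and as written your computation yields the wrong genus. With the correct value you get $g_1+g_2+b-1=2(n-1)g+(n-1)+2-1=(n-1)(2g+1)+1$, as claimed.

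\textbf{The destabilization candidates.} Since $\partial X^o$ is a single sphere, $b=1$. The cut systems of $C_{\Sigma}$ and $C_{\Sigma,\phi}$ then consist only of the $2g$ doubled arcs. They contain no curves parallel to a boundary component, and neither do the copies of the $\alpha^i$. So the pair you propose does not exist, and the one-curve-per-family configuration of \cite[Proposition~2.2]{moussard2025multisections} must be found elsewhere in the glued diagram. The paper simply asserts that it is visible there (cf.\ Figure \ref{fig_spun_cp2}). Your range of validity for the destabilization, all $n$ in the PL category and $n\leq 5$ smoothly, is correct.
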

\begin{proof}
Let $\Sigma_g$ be the central surface of the multisection of $X$. Example \ref{ex_xn_minus_ball} provides a relative multisection of $X^o$, with central surface $\Sigma_{g,1}$ and page a disk. From this, Example~\ref{ex_s1X} derives a relative multisection of $X^o \times S^1$, with central surface $\Sigma_{2(n-1)g,2}$ and page an annulus. Moreover, Example \ref{ex_multi_sn1_times_d2} provides a relative multisection of~$S^{n-1} \times D^2$, with central surface $\Sigma_{n-1,2}$ and page an annulus. Trivially gluing these relative multisections results in an $n$--section of $\mathcal{S}_1 (X)$, of genus $(n-1)(2g+1)+1$ (see Remark \ref{rmk_bdy_cpnts}). We obtain the associated multisection diagram by trivially gluing the associated arced diagrams. The resulting diagram for $\mathcal{S}_1(X)$ can be destabilized immediately once (with the restrictions mentioned in the previous example), which produces a multisection of the $1$--spun of $X$ of genus $(n-1)(2g+1)$, for $n \leq 5$ in the smooth category, and for any $n$ in the PL-category. Arced diagrams for $S^1 \times \mathbb({\mathbb{CP}}^2)^o$ and $S^3 \times D^2$ are featured on Figure \ref{fig_prediagram_spun_cp2}. Using these diagrams, we obtain the multisection diagram for $\mathcal{S}_1(\mathbb{CP}^2)$, featured on Figure \ref{fig_spun_cp2}.
\end{proof}  
\begin{figure}[h!]
\[
\begin{tikzpicture}[scale=0.2]
\node (mypic) at (0,0) {\includegraphics[scale=0.5]{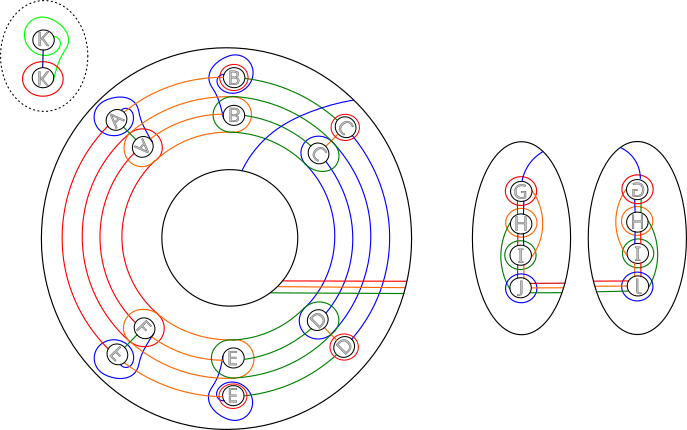}};
\end{tikzpicture}
\]
\caption{Arced diagram for $S^{1} \times (\mathbb{CP}^2)^o$ (left), for $S^{3} \times D^2$ (right)\\ \footnotesize{Top left: trisection diagram for $\mathbb{CP}^2$, from which the diagram for $S^{1} \times (\mathbb{CP}^2)^o$ is derived; the dashed disk represents $S^2$; circles with matching labels are identified}}
\label{fig_prediagram_spun_cp2}
\end{figure}
\begin{figure}[h!]
\[
\begin{tikzpicture}[scale=0.5]
\node (mypic) at (0,0) {\includegraphics[scale=0.6]{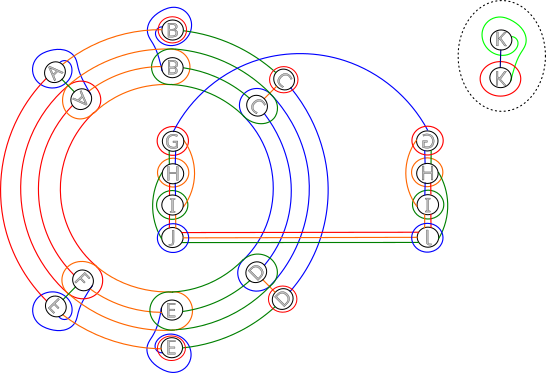}};
\end{tikzpicture}
\]
\caption{Quadrisection diagram for $\mathcal{S}_1(\mathbb{CP}^2)$, obtained from the trisection diagram for $\mathbb{CP}^2$ on the right \\ \footnotesize{Circles with matching labels are identified; the plane is compactified at infinity}}
\label{fig_spun_cp2}
\end{figure} 
\newpage
\printbibliography
\end{document}